\documentclass[10pt, reqno]{amsart}
\usepackage{graphicx, amssymb, amsmath, amsthm}
\usepackage{comment}%%将一大段话变成zhushi
\usepackage{epsfig}  
\usepackage{xcolor}
\usepackage{enumerate}
\usepackage{hyperref}
\usepackage{color}
\numberwithin{equation}{section}

\let\Re=\undefined\DeclareMathOperator*{\Re}{Re}
\let\Im=\undefined\DeclareMathOperator*{\Im}{Im}

\def\ov{\overline}

\newcommand{\pa}{\parallel}
\newcommand{\pe}{\perp}

\newtheorem{theorem}{Theorem}[section]
\newtheorem{lemma}[theorem]{Lemma}
\newtheorem{corollary}[theorem]{Corollary}
\newtheorem{proposition}[theorem]{Proposition}
\newtheorem{definition}[theorem]{Definition}
\newtheorem{remark}[theorem]{Remark}

\begin{document}

\title[Semi-Classical propagation of singularities for Stokes system]{Semi-Classical propagation of singularities for the Stokes system}
\author{Chenmin Sun}
\address{Laboratoire J.~A.~Dieudonn\'e, CNRS, UMR-7351, Université Côte d'azur}
\email{chenmin.sun1991@gmail.com}
\maketitle

\begin{abstract}
We study the quasi-mode of Stokes system posed on a smooth bounded domain $\Omega$ with Dirichlet boundary condition.  We prove that the semi-classical defect measure associated with a sequence of solutions concentrates on the bicharacteristics of Laplacian as a matrix-valued Radon measure. Moreover, we show that the support of the measure is invariant under the Melrose-Sj\"{o}strand flow.
\end{abstract}

\begin{center}
	\begin{minipage}{100mm}
		{ \small {{\bf Key Words:}  Stokes system;  Propagation of singularities; Semi-classical analysis.}
			{}
		}\\
		{ \small {\bf AMS Classification:}
			{35P20,  35S15, 35Q35.}
		}
	\end{minipage}
\end{center}
%%%%%%%%%%%%%%%%%%%%%%%%%%%%%%%%%%%%%%%%%%%%%%%%%%%%%%%%%%%%%%%%%%%%%%%%%%%%%
%% introduction
%%%%%%%%%%%%%%%%%%%%%%%%%%%%%%%%%%%%%%%%%%%%%%%%%%%%%%%%%%%%%%%%%%%%%%%%%%%%%
\section{Introduction}

Let $\Omega\subset \mathbb{R}^d$ be a smooth bounded domain. Consider the eigenvalue problem of Stokes equation
\begin{equation}\label{Stokeschapter1}
\left\{
\begin{aligned}
&-\Delta u_k+\nabla P_k=\lambda_k^2u_k, \;\textrm{ in }\; \Omega\\
& \textrm{ div } u_k=0,\; \textrm{ in }\; \Omega \\
& u_k|_{\partial\Omega}=0
\end{aligned}
\right.
\end{equation}
where $u_k\in (H^2(\Omega))^{d}\cap V$,$\|u_k\|_{L^2}=1$, are $\mathbb{R}^d$-valued normalized eigenfunctions and
$$ V=\{u\in(H_0^1(\Omega))^d:\textrm{div }u=0\}.
$$
We collect several facts which are well-known in functional analysis: 
\begin{itemize}
	\item $u_k$ forms a orthonormal basis of 
	$$ H=\{u\in (L^2(\Omega))^d:\textrm{ div }u=0,u\cdot\nu|_{\partial\Omega}=0\}
	$$
	The canonical projector $\Pi:(L^2(\Omega))^d\rightarrow H$ is called Leray projector.
	\item The pressure $P_k\in L^2(\Omega)/{\mathbb{R}}$ satisfies $\int_{\Omega}P_k=0$.
	\item $\|\nabla u_k\|_{L^2}^2=\lambda_k^2,\|u_k\|_{H^2}\leq C\lambda_k^2, \|\nabla P_k\|_{L^2}\leq C\lambda_k^2$,$\|P_k\|_{L^2}\leq C\lambda_k^2$.
\end{itemize}
We rephrase the system \eqref{Stokeschapter1} by semi-classical reduction. Taking $h_k=\lambda_k^{-1}$ and $q_k=\lambda_k^{-1}P_k$, dropping the sub-index, we obtain the following $h$-dependent quasi-mode system
\begin{equation*}
\left\{
\begin{aligned}
&-h^2\Delta u-u+h\nabla q=0,\; \textrm{ in } \Omega\\
&h \textrm{ div } u=0,\;  \textrm{ in } \Omega \\
& u|_{\partial\Omega}=0
\end{aligned}
\right.
\end{equation*}
In this article,  we will study the following generalization by adding a quasi-mode:
\begin{equation}\label{WStokesmainchapter1}
\left\{
\begin{aligned}
&-h^2\Delta u-u+h\nabla q=f,\;\textrm{ in } \Omega\\
&h \textrm{ div } u=0,\; \textrm{ in } \Omega \\
& u|_{\partial\Omega}=0
\end{aligned}
\right.
\end{equation}
with the following conditions:
$$ \|u\|_{L^2}=1,\|h\nabla u\|_{L^2}=O(1),\|h^2\nabla^2u\|_{L^2}=O(1), $$
$$\|h\nabla q\|_{L^2}=O(1),f\in H, \|f\|_{L^2}=o(h).
$$
When $h$ is small, the corresponding solution $u=u(h)$ can be interpreted as high-frequency quasi-mode as its mass, i.e., the $L^2$ norm, is essentially concentrated on the frequency scale $h^{-1}$.  

Before stating the main result, it is worth mentioning the eigenvalue problem of Laplace operator in semi-classical version:
\begin{equation}\label{Laplacianchapter1}
\left\{
\begin{aligned}
&-h^2\Delta u-u=0\textrm{ in } \Omega\\
& u|_{\partial\Omega}=0.
\end{aligned}
\right.
\end{equation}
One method to capture the high-frequency behavior of the solutions of \eqref{Laplacianchapter1} is to use semi-classical defect measure associated to a bounded sequence $(u_k)$ of $L^2(\Omega)$ and to a sequence of positive scales $h_k$ converging to zero. This measure is aimed to describe quantitatively the oscillations of $(u_k)$ at the frequency scale $ h_k^{-1}$. More precisely, for any bounded sequence $(w_k)$ of $L^2(\mathbb{R}^d)$, there exists a subsequence of $(w_k)$ and a non-negative Radon measure $\mu$ on $T^*\mathbb{R}^d$ such that for any $a(x,\xi)\in\mathcal{S}(\mathbb{R}^{2d})$,
$$ \lim_{k\rightarrow\infty}(a(x,h_kD_x)w_k|w_k)_{L^2(\mathbb{R}^d)}=\langle\mu,a\rangle.
$$
When $\Omega$ is a bounded domain, the precise definition of defect measure corresponding to the boundary value problem will be given later. 

Let us mention that a counterpart of semi-classical defect measure, micro-local defect measure, was introduced by P.~G\'{e}rard \cite{Gerard1991} and L.~Tartar \cite{Tartar1990} independently. These objects are widely used in the study of control and stabilization, scattering theory and quantum ergodicity, see for example \cite{Burq-Lebeau2001}, \cite{Burq2002}, \cite{Gerard-Leichtnam1993}. 

In the context of semi-classical defect measure, the classical theorem of Melrose-Sj\"{o}strand about propagation of singularities (\cite{Melrose-SjostrandI},\cite{Melrose-SjostrandII}) for hyperbolic equation  can be rephrased as follows:
\begin{theorem}[\cite{Gerard-Leichtnam1993}]\label{MSchapter1}
	Assume that $\Omega$ is a smooth, bounded domain with no infinite order of contact on the boundary. Suppose $\mu$ is the semi-classical defect measure associated to the pair $(u_k,h_k)$ where $(u_k)$ is a sequence of solutions to \eqref{Laplacianchapter1} (with $h=h_k$) which are bounded in $L^2(\Omega)$. Then $\mu$ is invariant under the Melrose-Sj\"{o}strand flow. 
\end{theorem}   
We will give the precise definition of the Melrose-Sj\"{o}strand flow and the associated concept of the order of contact in the second section. Intuitively, these flows are the generalization of geometric optics. No infinite order of contact means that the trajectory of the flow can not tangent to the boundary with an infinite order.

The main result of this paper is as follows.
\begin{theorem}\label{Stchapter1}
	Assume that $\Omega$ is a smooth, bounded domain with no infinite order of contact on the boundary. Suppose $(u_k)$ is a sequence of solutions to the quasi-mode problem \eqref{WStokesmainchapter1} with semi-classical parameters $h=h_k$. Assume that $f_k\in H$, $\|f_k\|_{L^2(\Omega)}=o(h_k)$ and  $u_k$ converges weakly to $0$ in $L^2(\Omega)$. Assume that $\mu$ is a semi-classical measure associated to some subsequence of $(u_k,h_k)$, then $\mathrm{supp }(\mu)$ is invariant under the Melrose-Sj\"{o}strand flow. 	
\end{theorem}
We make some comments about the result. Firstly, the measure $\mu$ is Hermitian matrix-valued, and we have no information so far on the precise propagation for $\mu$ except for supp$(\mu)$. Secondly, since the eigenfunctions of Stokes operator converge weakly to $0$ in $L^2(\Omega)$, our results includes this special case.

The Propagation theorem for a given quasi-mode has many applications, in particular, it leads to the stabilization of the associated damped evolution system. In the context of the damped wave equation, it was shown that (see \cite{Rauch-Taylor1974}, \cite{Bardo-Lebeau-Rauch1992}, \cite{Lebeau1996}) under the geometric control condition, the energy decays exponentially.
An application of Theorem \ref{Stchapter1} is the stabilization of a hyperbolic Stokes system, a model in the theory of linear elasticity introduced in \cite{Lions1992}, under the geometric control condition. More precisely, consider the damped hyperbolic-Stokes system:
\begin{equation}\label{dampedStokeschapter1}
\left \{   
\begin{array}{ll}
\partial^2_tu-\Delta u+\nabla p+a(x)\partial_t u=0 &  \mbox{in}  \   \mathbb{R}\times \Omega,  \\
\mathrm{div } \ u  = 0 & \mbox{in} \  \mathbb{R}\times \Omega, \\
u = 0 & \mbox{on} \  \mathbb{R}\times \partial \Omega , \\
(u(0, x),\partial_tu(0, x))=(u_0,v_0)\in V\times H,
\end{array}
\right. 
\end{equation}
The energy 
$$  E[u](t)=\frac{1}{2}\int_{\Omega}(|\partial_tu|^2+|\nabla u|^2)dx
$$
is dissipative. In \cite{Chaves-Sun2017}, we use propagation Theorem \ref{Stchapter1} to show that the energy decays exponentially in time.

Let us describe briefly our strategy for the proof of Theorem \ref{Stchapter1}. The pressure term $q$ is harmonic and in heuristic, it can only have the influence to the solution near the boundary. Hence we will prove that the measure $\mu_i$ is propagated in the same way as Laplace quasi-mode (semi-classical analogue of wave equation) along the rays inside the domain. When a ray reaches the boundary, we need a more careful analysis between the wave-like propagation phenomenon and the concentration phenomenon of the pressure. It is difficult to get a simple propagation formula near the boundary, comparing to the treatment of quasi-mode problem of Laplace operator as in \cite{Burq-Lebeau2001},\cite{Gerard-Leichtnam1993}. We partition the phase space into elliptic region $\mathcal{E}$, hyperbolic region $\mathcal{H}$ and glancing surface $\mathcal{G}$. It turns out that no singularity accumulates near the elliptic region. For the hyperbolic region, we prove the propagation by the standard energy estimate, with an additional treatment when the incidence of the ray is right.  Near the glancing surface, we will follow the arguments of Ivrii and Melrose-Sj\"{o}strand. The main difference is that we will encounter two new cross terms essentially of the form $(q|u)_{L^2}$ after certain micro-localization. 
To overcome this difficulty, we further micro-localize the solution according to the distance to the glancing surface $\mathcal{G}$ and treat them separately. For the part nearing $\mathcal{G}$, we use the fact that the pressure decays fast away from the boundary while the solution can not concentrate too much near the boundary, provided that it is micro-localized close enough to the glancing surface. For the part away from $\mathcal{G}$, it can be well-controlled by induction argument, using geometric properties of the generalized bicharacteristic flow.

\section{Preliminary}
\subsection{Notations}
We will sometimes drop the sub-index $k$ for a sequence of functions $(u_k)$ and semi-classical parameters $h_k$. In this circumstance, the notion $\|u\|_{X}=O(1),o(1)$ as $h\rightarrow 0 $ should be understood as $\|u_k\|_X=O(1),o(1)$ as $k\rightarrow\infty$ (thus $h_k\rightarrow 0$) up to certain subsequence.

As in the introduction, we follow the notation in the context of the analysis of Stokes system (see  \cite{bookTemam}) $$V=\{u\in H_{0}^1(\Omega)^N: \textrm{div } u=0\}$$ and
$$H=\{u\in L^2(\Omega)^N: \textrm{div } u=0,u\cdot\mathbb{\nu}|_{\partial\Omega}=0\}.
$$
In this paper we always use $\mathbb{\nu}$ to denote the outward normal vector on $\partial\Omega$. 

For a manifold $M$, we let $TM$ be its tangent bundle and $T^*M$ be the cotangent bundle with canonical projection
$$ \pi:TM( \textrm{ or }T^*M)\rightarrow M.
$$
We will identify system \eqref{WStokesmainchapter1} as a system on differential form
\begin{equation}\label{WStokesmain1formchapter1}
\left\{
\begin{aligned}
&h^2\Delta_H u-u+h\mathrm{d} q=f\textrm{ in } \Omega\\
&h \mathrm{d}^* u=0 \textrm{ in } \Omega \\
& u|_{\partial\Omega}=0
\end{aligned}
\right.
\end{equation}
where the unknown $u\in \Lambda^1(\Omega)$ is 1-form, and
$$ \mathrm{d}:\Lambda^p(\Omega)\rightarrow \Lambda^{p+1}(\Omega), \mathrm{d}^*:\Lambda^{p+1}(\Omega)\rightarrow \Lambda^p(\Omega)
$$  
are exterior differential and divergence operator on forms, with respectively. Recall also that he Hodge Laplace operator is defined by $$\Delta_H=\mathrm{d}\mathrm{d}^*+\mathrm{d}^*\mathrm{d}=(\mathrm{d}+\mathrm{d}^*)^2.$$

In the turbulent neighborhood of boundary, we can identify $\Omega$ locally as one side of the turbulent neighborhood denoted by $Y_+=[0,\epsilon_0)\times X$, $X=\{x'\in\mathbb{R}^{d-1}:|x'|<1\}$. We denote by $\underline{\partial}Y_+=Y_+|_{y=0}$ and $Y_+^0=Y_+|_{y>0}$. For $x\in\ov{\Omega}$, we note $x=(y,x')$, where $y\in [0,\epsilon_0),x'\in X$, and $x\in\partial\Omega$ if and only if $x=(0,x')$. In this coordinate system, the Euclidean metric $dx^2$ can be written as matrices
\begin{equation*}
\ov{g}=\left(
\begin{array}{cc}
1 & 0 \\
0 & g(y,x') \\
\end{array}
\right),
\ov{g}^{-1}=\left(
\begin{array}{cc}
1 & 0 \\
0 & g^{-1}(y,x') \\
\end{array}
\right),
\end{equation*}
with $|\xi'|_{g^{-1}(y,x'))}^2=\langle\xi',g^{-1}(y,x')\xi'\rangle_{\mathbb{R}^{d-1}}=g^{jk}\xi'_j\xi'_k$ be the induced metric on $T^*\partial\Omega$, parametrized by $y$. Note that
$|\xi'|_{g^{-1}(0,x')}^2=\langle\xi',g^{-1}(0,x')\xi'\rangle_{\mathbb{R}^{d-1}}=g^{jk}\xi'_j\xi'_k$ is the natural norm on $T^*\partial\Omega$, dual of the norm on $T\partial\Omega$, induced by the canonical metric on $\ov{\Omega}$. Write $(x,\xi)=(y,x',\eta,\xi')$ and denote by $|\xi|$ the Euclidean norm on $T^*\mathbb{R}^d$.
For $u,v\in \Lambda^1(Y_+)$ with support in the local chart of turbulence neighborhood, we define the $L^2$ norms and inner product on $[0,\epsilon_0)\times X$ via
$$ \|u\|_{L^2(Y_+)}^2:=\int_0^{\epsilon_0}\int_X \langle u|u\rangle \sqrt{\det(\ov{g})}dx'dy,$$  $$ (u|v)_{L^2(Y_+)}:=(u|v)_{Y_+}:=\int_0^{\epsilon_0}\int_X \langle u|v\rangle \sqrt{\det(\ov{g})}dx'dy,
$$
$$\|u(y,\cdot)\|_{L^2(\underline{\partial}Y_+)}^2:=\int_X\langle  u(y,\cdot)|u(y,\cdot)\rangle\sqrt{\det(g)}dx',$$  $$ (u|v)_{L^2(\underline{\partial}Y_+)}(y):=\int_X \langle  u(y,\cdot)|v(y,\cdot)\rangle\sqrt{\det(g)}dx',
$$
where for $u=u_0dy+u_jdx'^j,v=v_0dy+v_jdx'^j$,
$$ \langle u|v\rangle=u_0\ov{v}_0+u_j\ov{v_k}g^{jk}.
$$ 
In certain situations we also use global notation for $L^2$ inner product:
\begin{gather*}
(u|v)_{\Omega}:=\int_{\Omega}u\cdot\ov{v}dx,\quad
(f|g)_{\partial\Omega}:=\int_{\partial\Omega}f\cdot\ov{g}d\sigma(x).
\end{gather*}
We will identify the unknown vector fields $u,v,$ etc. with their dual 1-forms. Formulation of differential form will simplify some calculations. 
In the turbulence neighborhood, we write a vector field
$$L=L_{\pe}\frac{\partial}{\partial y}+L_{\pa},\quad 
L_{\pa}=\sum_{j=1}^{d-1}L_{\pa,j}\frac{\partial}{\partial x_j'} 
$$ and we write $L=(L_{\pe},L_{\pa}).$ The normal component obeys the following convention: $(a,0)=-a\nu$. 

Following \cite{Chaves-Lebeau2016}, we will write down system \eqref{WStokesmainchapter1} in the turbulent neighborhood. For
$u=(u_{\pe},u_{\pa})$, equation \eqref{WStokesmainchapter1} can be written as:
\begin{equation}
\left\{
\begin{aligned}
& (-h^2\Delta_{\pa}-1)u_{\pa}+h\nabla_{x'}q=f_{\pa},\\
& (-h^2\Delta_{g}-1)u_{\pe}+h\partial_y q=f_{\pe},\\
&h\textrm{ div }_{\pa}u_{\pa}+\frac{h}{\sqrt{\det g}}\partial_y(\sqrt{\mathrm{det }g} u_{\pe})=0
\end{aligned}
\right.
\end{equation}
where
$$ h^2\Delta_{\pa}=h^2\partial_y^2-\Lambda^2(y,x',hD_{x'})+hM_{\pa}(y,x',hD_x')+hM_1(y,x')h\partial_y,
$$
$$ h^2\Delta_{g}=h^2\partial_y^2-\Lambda^2(y,x',hD_{x'})+hM_{\pe}(y,x',hD_x')+
hN_1(y,x')h\partial_y,
$$
$$ h\textrm{ div }_{\pa}u_{\pa}=\frac{h}{\sqrt{\det g}}\sum_{j=1}^{N-1}\partial_{x'_j}(\sqrt{\det g}u_{\pa,j}).
$$
$h^2\Lambda^2(y,x',hD_{x'})$ has the symbol $\lambda^2=|\xi'|^2_{\alpha(y,\cdot)}$, and $M_{\pa,\pe}$ are both first-order matrix-valued semi-classical differential operators. 

\subsection{Geometric Preliminaries}
Denote by $^bT\ov{\Omega}$ the vector bundle whose sections are the vector fields $X(p)$ on $\ov{\Omega}$ with $X(p)\in T_p\partial\Omega$ if $p\in\partial\Omega$. Moreover, denote by $^bT^*\ov{\Omega}$ the Melrose's compressed cotangent bundle which is the dual bundle of $^bT\ov{\Omega}$. Let 
$$ j:T^*\ov{\Omega}\rightarrow ^bT^*\ov{\Omega}
$$
be the canonical map. In our geodesic coordinate system near $\partial\Omega$, $^bT\ov{\Omega}$ is generated by the vector fields $\frac{\partial}{\partial x'_1},\cdot\cdot\cdot, 
\frac{\partial}{\partial x'_{d-1}},y\frac{\partial}{\partial y}$ and thus $j$ is defined by
$$ j(y,x';\eta,\xi')=(y,x';v=y\eta,\xi').
$$

The principal symbol of operator $P_h=-(h^2\Delta+1)$ is $$p(y,x',\eta,\xi')=\eta^2+|\xi'|_{g^{-1}(y,x')}^2-1.$$
By Car($P$) we denote the characteristic variety of $p$:
$$\textrm{Car}(P):=\{(x,\xi)\in T^*\mathbb{R}^{d}|_{\ov{\Omega}}:p(x,\xi)=0\},\quad 
Z:=j(\textrm{Car}(P)).
$$
By writing in another way
$$p=\eta^2-r(y,x',\xi'),\quad  r(y,x',\xi')=1-|\xi'|_{g^{-1}(y,x')}^2,
$$
we have the decomposition
$$ T^*\partial\Omega=\mathcal{E}\cup\mathcal{H}\cup\mathcal{G},
$$
according to the value of $r_0:=r|_{y=0}$ where
$$\mathcal{E}=\{r_0<0\},
\mathcal{H}=\{r_0>0\},
\mathcal{G}=\{r_0=0\}.
$$
The sets $\mathcal{E},\mathcal{H},
\mathcal{G}$ are called elliptic, hyperbolic and glancing, with respectively. 

For a symplectic manifold $S$ with local coordinate $(z,\zeta)$, a Hamiltonian vector field associated with a real function $f$ is given by
$$ H_f=\frac{\partial f}{\partial \zeta}\frac{\partial}{\partial z}-\frac{\partial f}{\partial z}\frac{\partial}{\partial\zeta}.
$$
Now for $(x,\xi)\in\Omega$ far away from the boundary, the Hamiltonian vector field associated to the characteristic function $p$ is given by
$$ H_p=2\xi\frac{\partial}{\partial x}.
$$
We call the trajectory of the flow
$$ \phi_s:(x,\xi)\mapsto (x+s\xi,\xi)
$$
bicharacteristic or simply ray, provided that the point $x+s\xi$ is still in the interior.

To classify different situations as a ray reaching the boundary, we need more accurate decomposition of the glancing set $\mathcal{G}$. Let $r_1=\partial_yr|_{y=0}$ and define
$$ \mathcal{G}^{k+3}=\{(x',\xi'):r_0(x',\xi')=0,H_{r_0}^j(r_1)=0,\forall j\leq k;H_{r_0}^{k+1}(r_1)\neq 0\}, k\geq 0
$$
$$
\mathcal{G}^{2,\pm}:=\{(x',\xi'):r_0(x',\xi')=0,\pm r_1(x',\xi')>0\},\mathcal{G}^2:=\mathcal{G}^{2,+}\cup\mathcal{G}^{2,-}.
$$
Denote by $\mathcal{G}^{\infty}=\mathcal{G}\setminus\big(\cup_{j\geq 2}\mathcal{G}^j\big)$. We say that there is no infinite order of contact on the boundary if $\mathcal{G}^{\infty}=\emptyset$.

By setting
$$ H_p^G:=H_p+\frac{H_p^2y}{H_y^2p}H_y,
$$
$H_p^G$ is a well-defined vector field tangent to $\mathcal{G}$ which is called the gliding vector field.
Given a ray $\gamma(s)$ with $\pi(\gamma(0))\in \Omega$ and $\pi(\gamma(s_0))\in\partial\Omega$ be the first point that reaches the boundary.  If $\gamma(s_0)\in\mathcal{H}$, then $\eta_{\pm}(\gamma(s_0))=\pm\sqrt{r_0(\gamma(s_0))}$ are the two different roots of $\eta^2=r_0$ at this point. Notice that the ray starting with direction $\eta_-$ will leave $\Omega$,  while the ray with direction $\eta_+$ will enter the interior of $\Omega$. This motivates the following definition of broken bicharacteristic:
\begin{definition}[\cite{bookHormander(III)}]
	A broken bicharacteristic arc of $p$ is a map:
	$$ s\in I\setminus B\mapsto \gamma(s)\in T^*\Omega\setminus \{0\},
	$$
	where $I$ is an interval on $\mathbb{R}$ and $B$ is a discrete subset, such that
	\begin{itemize}
		\item If $J$ is an interval contained in $I\setminus B$, then 
		$ s\in J\mapsto \gamma(s)
		$
		is a bicharacteristic of $p$ over $\Omega$.
		\item If $s\in B$, then the limits $\gamma(s^+)$ and $\gamma(s^-)$ exist and belongs to $T_x^*\ov{\Omega}\setminus \{0\}$ for some $x\in\partial\Omega$, and the projections in $T_x^*\partial\Omega\setminus \{0\}$ are the same hyperbolic point.
	\end{itemize}
\end{definition}
When a ray $\gamma(s)$ reaches a point $\rho_0\in\mathcal{G}$, there are several situations. If $\rho_0\in\mathcal{G}^{2,+}$, then the ray passes transversally over $\rho_0$ and enters $T^*\Omega$ immediately. If $\rho_0\in\mathcal{G}^{2,-}$ or $\rho_0\in\mathcal{G}^k$ for some $k\geq3$, then we can continue it inside $T^*\partial\Omega$
as long as it can not leave the boundary along the trajectory of the Hamiltonian flow of $H_{-r_0}$. We now give the precise definition.

\begin{definition}[\cite{bookHormander(III)}]
	A generalized bicharacteristic ray of $p$ is a map:
	$$ s\in I\setminus B\mapsto \gamma(s)\in (T^{*}\ov{\Omega}\setminus T^*\partial\Omega) \cup \mathcal{G}
	$$
	where $I$ is an interval on $\mathbb{R}$ and $B$ is a discrete set of $I$ such that $p\circ \gamma=0$ and the following:
	\begin{enumerate}
		\item $\gamma(s)$ is differentiable and $\frac{d\gamma}{ds}=H_{p}(\gamma(s))$ if $\gamma(s)\in T^*\ov{\Omega}\setminus T^*\partial\Omega $ or $\gamma(s)\in\mathcal{G}^{2,+}$.
		\item Every $s\in B$ is isolated, $\gamma(s)\in T^*\ov{\Omega}\setminus T^*\partial\Omega$ if $s\neq t$ and $|s-t|$ is small enough, the limits $\gamma(s^{\pm})$ exist and are different points in the same fibre of $T^*\partial\Omega$.
		\item $\gamma(s)$ is differentiable and $\frac{d\gamma}{ds}=H_{p}^G(\gamma(s))$ if $\gamma(s)\in \mathcal{G}\setminus \mathcal{G}^{2,+}$.
	\end{enumerate}
\end{definition}
\begin{remark}
	The definition above does not depend on the choice of local coordinate, and in the geodesic coordinate system, the map
	$$ s\mapsto (y(s),\eta^2(s),x'(s),\xi'(s)) 
	$$
	is always continuous and 
	$$ s\mapsto (x'(s),\xi'(s))
	$$
	is always differentiable and satisfies the ordinary differential equations
	$$ \frac{dx'}{dt}=-\frac{\partial r}{\partial \xi'},\frac{d\xi'}{dt}=\frac{\partial r}{\partial x'},
	$$
	the map $s\mapsto y(s)$ is left and right differentiable with derivative $2\eta(s^{\pm})$ for any $s\in B$ (hyperbolic point).
	
	Moreover, there is also the continuous dependence with the initial data, namely the map
	$$ (s,\rho)\mapsto (y(s,\rho),\eta^2(s,\rho),x'(s,\rho),\xi'(s,\rho))
	$$
	is continuous. We denote the flow map by $\gamma(s,\rho)$.
\end{remark}

\begin{remark}
	Under the map $j:T^*\ov{\Omega}\rightarrow ^bT^*\ov{\Omega}$, one could regard $\gamma(s)$ as a continuous flow on the compressed cotangent bundle $^bT^*\ov{\Omega}$, and it is called the Melrose-Sj\"{o}strand flow. We will also call each trajectory generalized bicharacteristic or simply ray in the sequel. 
\end{remark}
From the classical result of Melrose-Sj\"ostrand, a generalized bicharacteristic that does not meet $\mathcal{G}^{\infty}$ is uniquely defined (see Corollary 24.3.10 in \cite{bookHormander(III)}). Then, having $\mathcal{G}^{\infty}=\emptyset$, meaning $\mathcal{G}^j=\emptyset$ for some $j\geq 2$, implies the uniqueness of all generalized bicharacteristics and thus the existence of the Melrose-Sj\"ostrand flow. We refer to Example 24.3.11 in \cite{bookHormander(III)} where nonuniqueness occurs precisely in the case of a point in $\mathcal{G}^{\infty}$.
\subsection{definition of defect measure}
We follow closely as in \cite{Burq2002} and the one can find in \cite{Gerard-Leichtnam1993} for a little different but comprehensive introduction.

We denote by $S^m$ the usual symbol class. Define the partial symbol class $S^m_{\partial}$ and the class of boundary $h$-pseudo-differential operators $ \mathcal{A}_h^m$ as follows 
\begin{gather*}
S^m_{\partial}:=\{a(y,x',\xi'):\sup_{\alpha,\beta,y\in[0,\epsilon_0]}|\partial_{x'}^{\alpha}
\partial_{\xi'}^{\beta}a(y,x',\xi')|\leq C_{m,\alpha,\beta}(1+|\xi'|)^{m-\beta}\}.\\
\mathcal{A}_h^m=:\textrm{Op}_h^{comp}(S^m)+\textrm{Op}_h(S_{\xi'}^m):=\mathcal{A}^m_{h,i}+\mathcal{A}^m_{h,,\partial}.
\end{gather*}

Consider functions of the form $a=a_i+a_{\partial}$ with $a_i\in C_c^{\infty}(\Omega\times\mathbb{R}^{d})$ which can be viewed as a symbol in $S^0$, and $a_{\partial}\in C_c^{\infty}(Y_+\times\mathbb{R}^{d-1})$ can be viewed as a symbol in $S_{\xi'}^0$. We quantize $a$ as follows: Take $\varphi_i\in C_c^{\infty}(\Omega)$, equal to 1 near the $x$-projection of $\mathrm{supp}(a_i)$ and  $\varphi_{\partial}\in C_c^{\infty}(\mathbb{R}^d)$, equal to 1 near the $x$-projection of $\mathrm{supp}(a_{\partial})$. Define 
\begin{equation*}
\begin{split}
\textrm{Op}^{\varphi_i,\varphi_{\partial}}_h(a)f(y,x')=&
\frac{1}{(2\pi h)^d}\int_{\mathbb{R}^{2d}}
e^{\frac{i(x-z)\xi}{h}}a_i(x,\xi)\varphi_i(z)f(z)dzd\xi\\
&+\frac{1}{(2\pi h)^{d-1}}
\int_{\mathbb{R}^{2(d-1)}}
e^{\frac{i(x'-z')\xi'}{h}}
a_{\partial}(y,x',\xi')\varphi_{\partial}(y,z')f(y,z')dz'd\xi'.
\end{split}
\end{equation*}
According to the symbolic calculus, the operator $\textrm{Op}^{\varphi_i,\varphi_{\partial}}_h(a)$ does not depend on the choice of functions $\varphi_i,\varphi_{\partial}$, modulo operators of norms $O_{L_{\textrm{loc}}^2\rightarrow L_{\textrm{comp}}^2}(h^{\infty})$, and we will use the notion $\mathrm{Op}_h(a)$ in the sequel. Notice that the acting of tangential operator $\mathrm{Op}_h(a_{\partial})$ can be viewed as pseudo-differential operator on the manifold $\partial\Omega$, parametrized by the parameter $y\in[0,\epsilon_0)$.  The bounded family of operators $\mathcal{A}^m_{h,\partial}$ is defined uniquely up to a family of operators with norms uniformly dominated by $Ch$, as $h\rightarrow 0$.  Moreover, for any family $(A_h)$, such that 
$$ \|A_h-\textrm{Op}_h(a_{\partial})\|_{L^2\rightarrow L^2}=O(h),
$$
the principal symbol $\sigma(A)$ is determined uniquely as a function on $T^*\partial\Omega$, smoothly depending on $y$, i.e.
$\sigma(A)\in C^{\infty}([0,\epsilon_0)\times T^*\partial\Omega)$. 

When we deal with vector-valued functions, we could require the symbol $a$ to be matrix-valued. Now for any sequence of vector-valued function $w_k$, uniformly bounded in $L^2(\Omega)$, there exists a subsequence (still use $w_k$ for simplicity), and a nonnegative definite Hermitian matrix-valued Radon measure $\mu_i$ on $T^*\Omega$ such that
$$ \lim_{k\rightarrow 0}(\textrm{Op}_{h_k}(a_i)w_k|w_k)_{L^2}=\langle\mu_i,a_i\rangle
:=
\int_{T^*\Omega}\textrm{tr }(a_id\mu_i).
$$  
For a proof, see for example \cite{Burq2002} or the textbook \cite{bookZworski}, and the micro-local version was appeared in \cite{Gerard1991}.

From now on the symbols and operators will be scalar-valued unless otherwise specified. Suppose $u_k$ be a sequence of solutions to

\begin{equation}\label{Stokes-semisequencechapter1}
\left\{
\begin{aligned}
&-h_k^2\Delta u_k-u_k+h_k\nabla q_k=f_k,\;(u_k,f_k)\in (H^2(\Omega)\cap V)\times H,\\
&h_k\mathrm{div } u_k=0,\;\textrm{in} \ \Omega \\
\end{aligned}
\right.
\end{equation}
under the assumptions below:
\begin{equation}\label{assumption1chapter1}
\begin{split}
&\|u_k\|_{L^2(\Omega)}=O(1),\quad f_k\in H\textrm{ and }\|f_k\|_{L^2(\Omega)}=o(h_k),\\
&\|h\nabla q_k\|_{L^2(\Omega)}=O(1),\quad \int_{\Omega}q_kdx=0.\\
\end{split}
\end{equation}
Let $\mu_i$ be the interior measure associated with $(u_k)_k$. Then the following result shows that $\mu_i$ is supported on $\textrm{Car}(P)$.
\begin{proposition}\label{Carchapter1}
	Let $a_i\in C_c^{\infty}(\Omega\times\mathbb{R}^d)$ be equal to $0$ near $\mathrm{Car}(P)$, then we have
	$$ \lim_{k\rightarrow\infty}(\mathrm{Op}_{h_k}(a_i)u_k|u_k)_{L^2}= 0.
	$$ %%\mathrm{}这个命令可以把数学环境中的斜体变为整体
\end{proposition} 
\begin{proof} Note that the symbol $b(x,\xi)=\frac{a_i(x,\xi)}{|\xi|^2-1}\in S^0$ is well-defined from the assumption on $a_i$. From symbolic calculus, we have
	$$ \textrm{Op}_{h_k}(a_i)=B_{h_k}(-h_k^2\Delta-1)+O_{L^2\rightarrow L^2}(h_k).
	$$
	Therefore
	\begin{equation*}
	\begin{split}
	(B_{h_k}(-h_k^2\Delta-1)u_k|u_k)_{L^2}&=(B_{h_k}f_k|u_k)_{L^2}-(B_{h_k}h_k\nabla q_k|u_k)_{L^2}\\
	&=o(1)+([h_k\nabla,B_{h_k}]q_k|u_k)_{L^2}-(h_k\nabla B_{h_k}q_k|u_k)_{L^2}\\
	&=o(1), \textrm{ as }k\rightarrow\infty,
	\end{split}
	\end{equation*}
	where in the last line we have used the symbolic calculus, integration by part, and Lemma \ref{press.normchapter1}.
\end{proof}

Now we denote by $Z=j(\mathrm{Car}(P))$.
Proposition \ref{Carchapter1} indicates that the interior defect measure $\mu_i$ is supported on 
$Z$. To define the defect measure near the boundary, we have to check that if 
$a_{\partial}\in C_c^{\infty}(U\times\mathbb{R}^{d-1})$ vanishing near $Z$ (i.e. $a_{\partial}$ is supported in the elliptic region for all $y$ small) then
$$ \lim_{k\rightarrow\infty}(\textrm{Op}_{h_k}(a_{\partial})u_k|u_k)_{L^2}=0.
$$
Indeed, this can be ensured by the analysis of boundary value problem in the elliptic region, which will be given later. Now for any family of operator $A_h\in\mathcal{A}_h^0$, let $a=\sigma(A_h)$ be the principal symbol of $A_h$
and we define $\kappa(a)\in C^0(Z)$ via
$\kappa(a)(\rho):=a(j^{-1}(\rho)).$
Note that $Z$ is a locally compact metric space and the set
$$ \{\kappa(a):a=\sigma(A_h),A_h\in\mathcal{A}_h^0\}
$$
is a locally dense subset of $C^0(Z)$.
We then have the following proposition, which guarantees the existence of a Radon measure on $Z$:
\begin{proposition}
	There exists a subsequence of $u_k,h_k$ and a nonnegative definite Hermitian matrix-valued Radon measure $\mu$ on $Z$, such that
	$$ \lim_{k\rightarrow\infty}
	(A_{h_k}u_k|u_k)_{L^2}=\langle\mu,\kappa(a)\rangle, a=\sigma(A_{h}),\forall A_h\in \mathcal{A}_h^0.
	$$
\end{proposition}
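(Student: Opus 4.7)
The plan is to construct the functional $A_h\mapsto \lim_k(A_{h_k}u_k|u_k)_{L^2}$ by weak-$*$ extraction, verify that it depends only on $\kappa(\sigma(A_h))\in C_c^0(Z)$, and then invoke the Riesz--Markov theorem; the Hermitian matrix-valued nature will come from running the construction on all sesquilinear forms $(A_h u_k^l|u_k^m)_{L^2}$ with $l,m\in\{1,\dots,d\}$, and positivity will come from the sharp G\aa{}rding inequality.

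First I would observe that every $A_h\in\mathcal{A}_h^0$ is uniformly $L^2(\Omega)$-bounded: the interior piece by Calder\'on--Vaillancourt on $\mathbb{R}^d$, the tangential piece by Calder\'on--Vaillancourt on $X$ uniformly in the parameter $y\in[0,\epsilon_0)$ followed by integration in $y$. Combined with $\|u_k\|_{L^2}=O(1)$ this bounds all the sesquilinear quantities $(A_{h_k}u_k^l|u_k^m)_{L^2}$. Pick a countable family of symbols whose images under $\kappa$ are dense in $C_c^0(Z)$ (possible by the local density stated just above the proposition); a Cantor diagonal extraction produces a subsequence, still denoted $h_k$, along which
\[
\ell_{lm}(A):=\lim_{k\to\infty}(A_{h_k}u_k^l|u_k^m)_{L^2}
\]
exists for every pair $l,m$ and every $A$ in the dense family, and the uniform bound extends $\ell_{lm}$ to a continuous linear functional on $\mathcal{A}_h^0$.

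Next I would show that $\ell_{lm}$ factors through $\kappa$. Two quantizations of the same principal symbol differ by an $O_{L^2\to L^2}(h)$ remainder, so $\ell_{lm}$ only sees $\sigma(A_h)$. Writing $a=a_i+a_\partial$ and assuming $\kappa(a)\equiv 0$ on $Z$: then $a_i$ vanishes near $\mathrm{Car}(P)$ and Proposition~\ref{Car}, applied componentwise, yields $(\mathrm{Op}_{h_k}(a_i)u_k^l|u_k^m)_{L^2}\to 0$; the symbol $a_\partial|_{y=0}$ is supported in the elliptic region $\{|\xi'|_{\alpha(0,x')}^2>1\}$, and the vanishing $(\mathrm{Op}_{h_k}(a_\partial)u_k|u_k)_{L^2}\to 0$ is the elliptic boundary statement deferred to Section~6. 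Hence $\ell_{lm}$ descends to a continuous linear functional $\widetilde\ell_{lm}$ on $C_c^0(Z)$. For positivity, take $\varphi\geq 0$ in $C_c^0(Z)$ and $c\in\mathbb{C}^d$: one can choose $a\geq 0$ on the cotangent bundle with $\kappa(a)$ approximating $\varphi$ (since $j$ is a change of variable away from $y=0$ and can be extended positively near $y=0$), and sharp G\aa{}rding applied to the scalar sequence $w_k=\sum_l c_l u_k^l$ gives
\[
\sum_{l,m}c_l\bar c_m(A_{h_k}u_k^l|u_k^m)_{L^2}=(A_{h_k}w_k|w_k)_{L^2}\geq -Ch_k\|w_k\|_{L^2}^2,
\]
so $\sum_{l,m}c_l\bar c_m\widetilde\ell_{lm}(\varphi)\geq 0$ in the limit. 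Hermiticity $\widetilde\ell_{lm}=\overline{\widetilde\ell_{ml}}$ follows by replacing $A$ with $A^*$ (same principal symbol modulo $O(h)$ for real symbols) and using $(Au|v)=\overline{(A^*v|u)}$. Riesz--Markov then yields complex Radon measures $\mu_{lm}$ on $Z$ with $\widetilde\ell_{lm}(\varphi)=\int_Z\varphi\,d\mu_{lm}$, and $\mu=(\mu_{lm})$ is the asserted Hermitian nonnegative definite matrix-valued Radon measure; summing $l=m$ recovers the stated identity.

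The main obstacle is the elliptic-region vanishing $(\mathrm{Op}_{h_k}(a_\partial)u_k|u_k)_{L^2}\to 0$ used in the factoring step: it requires an elliptic parametrix for the Stokes boundary problem, in which the pressure $h\nabla q_k$ must be absorbed using the divergence-free condition together with the a priori bound $\|h\nabla q_k\|_{L^2}=O(1)$, and it is the genuinely Stokes-specific input compared to the scalar Laplace-quasi-mode case. Once this ingredient is supplied in Section~6, the remaining pieces---Calder\'on--Vaillancourt, sharp G\aa{}rding and Riesz--Markov---are entirely classical.
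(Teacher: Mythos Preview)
Your proposal is correct and is precisely the standard construction that the paper has in mind: the paper does not give its own proof of this proposition but simply refers to \cite{NB1}, \cite{BL} and \cite{PG2}, whose arguments are exactly the Calder\'on--Vaillancourt bound, diagonal extraction, factoring through $\kappa$ via Proposition~\ref{Car} and the elliptic-region vanishing (deferred to Section~6), sharp G\aa{}rding for positivity, and Riesz--Markov. Your identification of the elliptic boundary step as the only Stokes-specific input is accurate.
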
 
The proof of this result can be found in \cite{Burq2002}, see also \cite{Burq-Lebeau2001} and \cite{Gerard1991} for its micro-local counterpart. Notice that if we write 
$ a=a_i+a_{\partial},
$
then
$$
(A_k u_k| u_k)\rightarrow \int_{T^*\Omega}\textrm{ tr }(a_i(\rho)d\mu_i(\rho))+
\int_Z \textrm{ tr }(a_{\partial}(\rho)d\mu(\rho)).
$$

The following result shows that information about frequencies higher than the scale $h_k^{-1}$ is not lost, and the measure $\mu$ contains the relevant information of the sequence $(u_k)$.
\begin{proposition}\label{frequencyconcentratechapter1}
	The sequence of solution $(u_k)$ is $h_k-$oscillating in the following sense:
	$$ \lim_{R\rightarrow\infty}\limsup_{k\rightarrow\infty}
	\int_{|\xi|\geq Rh_k^{-1}}
	|\widehat{\psi u_k}(\xi)|^2d\xi=0,\forall \psi\in C_c^{\infty}(\Omega),
	$$
	$$ \lim_{R\rightarrow\infty}\limsup_{k\rightarrow\infty}
	\int_0^{\epsilon_0}dy\int_{|\xi'|\geq Rh_k^{-1}}
	|\widehat{\psi u_k}(y,\xi')|^2d\xi'=0,\forall \psi\in C_c^{\infty}(\ov{\Omega}),
	$$
	where in the second formula,  the Fourier transform is only taken for the $x'$ direction. Consequently, the total mass $\|u_k\|_{L^2(\Omega)}^2$ converges to $\langle\mu_i,T^*\partial\Omega\rangle+\langle\mu,\mathbf{1}_{Z}\rangle$.
\end{proposition}
The proof will be given in appendix.

\section{A priori information about the system}
\subsection{Information about the trace}
We consider the semi-classical Stokes system
\begin{equation}\label{Stokes-semichapter1}
\left\{
\begin{aligned}
&-h^2\Delta u-u+h\nabla q=f,\;(u,f)\in (H^2(\Omega)\cap V)\times H\\
&h\ \mathrm{ div } u=0,\;\textrm{in} \ \Omega \\
\end{aligned}
\right.
\end{equation}
Assume that $\|u\|_{L^2(\Omega)}=O(1),\|f\|_{L^2(\Omega)}=o(h).$
Taking inner product with $u$ and doing integration by part, we have
$ \|h\nabla u\|_{L^2(\Omega)}=O(1).
$
Since $q\in L^2(\Omega)/\mathbb{R}$, we may assume that $\int_{\Omega}qdx=0$. From the regularity theory of the steady Stokes system, (see \cite{bookTemam}, page 33) and  Poincar\'{e}'s inequality, we have
$$ \|h^2\nabla^2 u\|_{L^2(\Omega)}=O(1),\|q\|_{L^2(\Omega)}=O(h^{-1}), \|h\nabla q\|_{L^2(\Omega)}=O(1).
$$

The following is a direct consequence of trace theorem for  $q_0=q|_{\partial\Omega}$. 
\begin{lemma}
	$\|q_0\|_{H^{1/2}(\partial\Omega)}=O(h^{-1}).$
\end{lemma}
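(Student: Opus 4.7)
The statement is essentially an immediate consequence of the $H^1(\Omega)$ bound on $q$ already recorded just above the lemma, combined with the classical trace inequality. My plan is therefore very short.

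First I would collect the two bounds for $q$ that the paragraph preceding the lemma established, namely $\|q\|_{L^2(\Omega)} = O(h^{-1})$ (from the regularity theory for the steady Stokes system in Temam, together with the normalization $\int_\Omega q\,dx = 0$ and Poincar\'e's inequality) and $\|h\nabla q\|_{L^2(\Omega)} = O(1)$. Dividing the second bound by $h$ gives $\|\nabla q\|_{L^2(\Omega)} = O(h^{-1})$, so
\[
\|q\|_{H^1(\Omega)} \le \|q\|_{L^2(\Omega)} + \|\nabla q\|_{L^2(\Omega)} = O(h^{-1}).
\]

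Next I would invoke the standard trace theorem $\gamma : H^1(\Omega) \to H^{1/2}(\partial\Omega)$, which is bounded because $\Omega$ is smooth (as specified in the geometric assumption on $\Omega$ just above). This yields
\[
\|q_0\|_{H^{1/2}(\partial\Omega)} = \|\gamma q\|_{H^{1/2}(\partial\Omega)} \le C\|q\|_{H^1(\Omega)} = O(h^{-1}),
\]
which is the desired estimate. There is no real obstacle; the only point worth noting is that the constant in the trace inequality is independent of $h$, since the geometry of $\Omega$ is fixed, so the whole $h$-dependence comes from the $H^1$-norm of $q$ itself.
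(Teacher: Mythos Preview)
Your proof is correct and matches the paper's approach exactly: the paper itself simply states that the lemma ``is a direct consequence of trace theorem for $q_0=q|_{\partial\Omega}$'' without further elaboration, and you have supplied precisely the one-line argument (combine $\|q\|_{L^2}=O(h^{-1})$ and $\|\nabla q\|_{L^2}=O(h^{-1})$ with the $H^1\to H^{1/2}$ trace map).
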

We also have the hidden regularity property for the normal derivative.
\begin{lemma}\label{hiddenregularitychapter1}
	$h\partial_{\mathbf{\nu}}u|_{\partial\Omega}=(h\partial_{\mathbf{\nu}}u_{\pa},0)$ and
	$\|h\partial_{\mathbf{\nu}}u|_{\partial\Omega}\|_{L^2(\partial\Omega)}=O(1)$.
\end{lemma}
The proof of this lemma will be given in appendix. We will recover some information for low frequencies from the following lemma:
\begin{lemma}\label{press.normchapter1}
	Suppose $u\rightharpoonup 0$ in $L^2(\Omega)$. Then
	after extracting to subsequences, we have $h\nabla q\rightharpoonup 0$ weakly in $L^2(\Omega)$ and $hq\rightarrow 0$ strongly in $H^{1/2}(\Omega)$.
\end{lemma}
\begin{proof}
	We may assume that $h\nabla q\rightharpoonup r$ weakly in $L^2(\Omega)$. Then the Rellich theorem implies that $hq\rightarrow P$ strongly in $L^2(\Omega)$, and thus $\nabla P=r$ with the property $\int_{\Omega} P=0$.  Moreover $\Delta P=0$ in $\Omega$, in the distributional sense. Since the sequence $(h^2\nabla^2 u)$ is bounded in $L^2$, then up to a subsequence, $h^2\nabla^2 u\rightharpoonup W$ weakly in $L^2$. From the Rellich theorem, the sequence $(h^2u)$ converges strongly in $L^2$ and the strong limit must be $0$, due to the fact that $u\rightharpoonup 0$, weakly in $L^2$. Thus $W=0$ and this implies that $\nabla P=0$. Finally, we must have $P=0$ since it has zero mean value. The last assertion follows from the Rellich theorem.
\end{proof}

\subsection{Semi-classical parametrix of the pressure term}
In system \eqref{Stokes-semichapter1}, the family of pressures $q$ satisfy the boundary value problem of Laplace equation
$$ -h^2\Delta q=0,\textrm{ in }\Omega,q|_{\partial\Omega}=q_{0}
$$
with unknown boundary data $q_0$. We denote by $\textrm{PI}(q_0)$ the Poisson integral of the corresponding harmonic function with trace $q_0$. Let $\mathcal{N}$ be the Dirichlet-Neumann operator satisfying 
$$ \mathcal{N}q_0=\partial_{\nu}\mathrm{PI}(q_0)|_{\partial\Omega}.
$$
Next we study the behaviour of the sequence of pressure $q$ in the regime of frequency scale $h^{-1}$. We always fix the notation
$$ \lambda(y,x',\xi')=|\xi'|_{g^{-1}(y,x')}\sim |\xi'|.
$$
Let $Y=(-\epsilon_0,\epsilon_0)_y\times X_x$ and $Y_+=[0,\epsilon_0)_y\times X_x$. We first have the $L^2$ bound of $q$, micro-locally away from $\xi'=0$.
\begin{lemma}\label{microlocaltracechapter1}
	Let $(f_h)_{0<h<1}$ be a $h$-dependent family of distributions such that  $\|f_h\|_{L^2(\mathbb{R}^{n})}=O(h^{-N})$ for some $N\in\mathbb{N}$. Assume that for any $\chi\in C_c^{\infty}(\mathbb{R}^{2n})$, vanishing near $\xi=0$, we have $\|\chi(x,hD_x)f_h\|_{H^{\frac{1}{2}}(\mathbb{R}^n)}=O(h^{-1})$. Then
	$$ \|\chi(x,hD_x)f_h\|_{L^2(\mathbb{R}^n)}=O(h^{-\frac{1}{2}}).
	$$
\end{lemma}
\begin{proof}
	Assume that $\{|\xi|\leq 2\delta_0\}\cap \textrm{supp}(\chi)=\emptyset$. Take $\Phi\in C_c^{\infty}(\mathbb{R}^n)$ such that
	$$ \Phi(\xi)=1,|\xi|\leq \delta_0,\quad  \Phi(\xi)=0,|\xi|>2\delta_0.
	$$ 
	We write
	$$ \chi(x,hD_x)f=\Phi(hD_x)\chi(x,hD_x)f+(1-\Phi(hD_x))\chi(x,hD_x)f.
	$$
	From the support property we have  $\Phi(hD_x)\chi(x,hD_x)f=O_{H^{\infty}}(h^{\infty})$. Thus $(1-\Phi(hD_x))\chi(x,hD_x)f=O_{H^{\frac{1}{2}}}(h^{-1})$. Let $b(\xi)=|\xi|^{1/2}(1-\Phi(\xi))$, and we have $b(hD_x)\chi(x,hD_x)f=O_{L^2}(h^{-\frac{1}{2}})$. Since $b(\xi)\neq 0$ on supp$(\chi)$, we have
	$$ \|\chi(x,hD_x)f\|_{L^2(\mathbb{R}^n)}\leq C\|b(hD_x)\chi(x,hD_x)f\|_{L^2(\mathbb{R}^n)}+Ch\|\chi(x,hD_x)f\|_{L^2(\mathbb{R}^n)}\leq Ch^{-\frac{1}{2}}.
	$$
\end{proof}

\begin{lemma}\label{pressure L^2 boundchapter1}
	Given $\delta_0>0$ and $\varphi,\widetilde{\varphi}\in C_c^{\infty}(Y_+)$. For any $\chi_{\delta_0}\in C_c^{\infty}(Y_+\times\mathbb{R}^{d-1})$ such that $\chi_{\delta_0}|\equiv 0$ if $\lambda(y,x',\xi')\leq 2\delta_0$, we have
	$$ \|\widetilde{\varphi}\mathrm{Op}_h(\chi_{\delta_0})(\varphi q)\|_{L^2(\mathbb{R}_+^{d})}+h^{1/2}\|\left(\widetilde{\varphi}\mathrm{Op}_h(\chi_{\delta_0})(\varphi q)\right)|_{y=0}\|_{L^2(\mathbb{R}^{d-1})}\leq C_{\delta_0,\varphi,\widetilde{\varphi}}.
	$$
\end{lemma}
\begin{proof}
	Write $D_j=\frac{1}{i}\frac{\partial}{\partial x_j'}$, we have $\|hD_j(\varphi q)\|_{L^2(\mathbb{R}_+^{d})}=O(1)$. Since $\frac{\xi_j'}{|\xi'|^2}\chi_{\delta_0}(y,x',\xi')\in S^0_{\partial},$ then for $\chi_j=\frac{\xi'_j}{|\xi'|^2}\chi_{\delta_0}$, we have
	$$ \widetilde{\varphi}\chi_{\delta_0}(y,x',hD_{x'})(\varphi q)=\sum_{j=1}^{d-1}\widetilde{\varphi}\chi_j(y,x',hD_{x'})hD_j(\varphi q)+O_{L^2(\mathbb{R}_+^{d})}(1)=O_{L^2(\mathbb{R}_+^{d})}(1),
	$$
	where the implicit bound in big $O$ depends on $\delta_0,\varphi,\widetilde{\varphi}$. For the boundary term, we observe that $\widetilde{\varphi}\mathrm{Op}_h(\chi_{\delta_0})(\varphi q)|_{y=0}=O_{H^{1/2}(\mathbb{R}^{d-1})}(h^{-1})$ from trace theorem. Thus from Lemma \ref{microlocaltracechapter1},  $\widetilde{\varphi}\mathrm{Op}_h(\chi_{\delta_0})(\varphi q)|_{y=0}=O_{L^{2}(\mathbb{R}^{d-1})}(h^{-1/2})$.
\end{proof}
We express semi-classical Laplace operator $h^2\Delta_g$ in the geodesic coordinates of turbulence neighborhood $Y$ by
$$ P_0=h^2\partial_y^2+g^{ij}\partial_{i}\partial_j+hM_{j}(y,x')h\partial_j+
hH(y,x')h\partial_y
$$
where $\partial_j=\partial_{x'_{j}}$. We make the ansatz
$$ \widetilde{q}(y,x'):=\frac{1}{(2\pi h)^{d-1}}\int a(y,h,x',\xi')e^{\frac{ix'\xi'}{h}}\theta(\xi')d\xi',
$$
then we calculate
\begin{equation*}
\begin{split}
P_0(\widetilde{q})(y,x',\xi')=&\frac{1}{(2\pi h)^{d-1}}\int \left(h^2\partial_y^2 a+g^{jk}(h^2\partial_j\partial_ka-g^{jk}\xi'_j\xi'_k a)\right)
e^{\frac{ix'\xi'}{h}}\theta(\xi')d\xi'\\
+&\frac{1}{(2\pi h)^{d-1}}
\int \left(ihg^{jk}\xi'_k\partial_ja\right)
e^{\frac{ix'\xi'}{h}}\theta(\xi')d\xi'\\
+&\frac{1}{(2\pi h)^{d-1}}\int\left((h^2M_j\partial_j a+ihM_j\xi'_ja)+h^2H\partial_ya\right)e^{\frac{ix'\xi'}{h}}\theta(\xi')d\xi'.
\end{split}
\end{equation*}
We next look for the formal semi-classical expansion
$$ a(y,h,x',\xi')\backsimeq \sum_{j\geq 0}h^j a_j(y,h,x',\xi')
$$
with $a_j\in S_{\partial}^{-j}$ and $h^k\partial_y^k a_j\in S_{\partial}^{-j+k}$.
We obtain
\begin{equation}
\begin{aligned}
P_0\widetilde{q}\backsimeq
\frac{1}{(2\pi h)^{d-1}}\int &((h^2\partial_y^2 a_0-g^{ij}\xi_i\xi_j a_0)\\
+&h(ig^{jk}\xi'_k\partial_ja_0+iM_j\xi'_ja_0+h^2H\partial_ya_0)\\
+&h(h^2\partial_y^2a_1-g^{jk}\xi'_j\xi'_ka_1)\\
+&h^2(g^{jk}\partial_j\partial_ka_0+M_j\partial_ja_0)\\
+&h^2(ig^{jk}\xi'_k\partial_ja_1+iM_j\xi'_ja_1+h^2H\partial_ya_1)\\
+&h^2(h^2\partial_y^2a_2-g^{jk}\xi'_j\xi'_ka_2)\\
+&\cdot\cdot\cdot)
e^{\frac{ix'\xi'}{h}}\theta(\xi')d\xi'.
\nonumber
\end{aligned}
\end{equation}
Pick  $\varphi_{1,0}=\varphi_1|_{\partial\Omega},\varphi_1\in C_c^{\infty}(Y)$. For $\widetilde{q_0}=\varphi_{1,0}q_0$, we put $$\theta(\xi')=\mathcal{F}_h(\widetilde{q_0}(\xi'))=(2\pi h)^{-(d-1)}\int_{\mathbb{R}^{d-1}}\widetilde{q_0}(x')e^{-ix'\xi'/h}dx',$$
$$a_0(0,\cdot)\equiv 1,\quad a_j(0,\cdot)\equiv 0,\forall j\geq 1,$$ 
and we define the functions $a_j$ inductively as follows: firstly we define  $a_0$
$$ a_0(y,x',\xi')=e^{-\frac{y\lambda(y,x',\xi')}{h}}, \lambda(y,x',\xi')=:\sqrt{g^{ij}\xi'_i\xi'_j}\sim |\xi'|, 
$$ and the quantity
$$ (h^2\partial_y^2-\lambda^2)a_0=h\left(\frac{h^2}{\lambda^2}\frac{y^2\lambda^2}{h^2}(\partial_y
\lambda)^2+\frac{2y\lambda}{h}\partial_y\lambda-2\partial_y\lambda\right)e^{-\frac{y\lambda}{h}}$$
can be viewed as of order $h$.
Next we set $a_j,j\geq 1$ implicitly by solving a sequence of linear ODEs:
\begin{equation*}
\begin{split}
h^2\partial_y^2 a_1-\lambda^2a_1=&-h^{-1}(h^2\partial_y^2-\lambda^2)a_0-(ig^{jk}\xi'_k\partial_ja_0+iM_j\xi'_ja_0+h^2H\partial_ya_0).\\
h^2\partial_y^2 a_{n}-g^{ij}\xi_i\xi_j a_{n}=&-(g^{ij}\partial_i\partial_ja_{n-2}+M_j\partial_ja_{n-2})\\ &-
(ig^{jk}\xi'_k\partial_ja_{n-1}+iM_j\xi'_ja_{n-1}+h^2H\partial_ya_{n-1}),n\geq 2.
\end{split}
\end{equation*}

Unfortunately, the functions $a_j$ constructed above are not symbols, since they have singularities when $\xi'=0$. This indicates that we can only obtain information of $q(h)$ from such parametrix away from $\xi'=0$. We modify the construction above by setting
$$ A_0(y,x',\xi')=e^{-\frac{y\lambda}{h}}\psi_{\delta_0}(\lambda)\varphi_2(y,x'), \quad (y,x',\xi')\in \mathbb{R}_+^{d}\times\mathbb{R}^{d-1},
$$
with $\psi_{\delta_0}=\psi(\delta_0^{-1}\cdot),\psi\in C^{\infty}(\mathbb{R}_+)$ satisfying $\psi(s)\equiv 1$ when $s\geq 1$ and $\psi(s)=0$ when $0<s\leq\frac{1}{2}$.  We next modify other $A_j$ in the same manner. Indeed, the ODEs which define $A_j$ are linear ODEs in $y$ variable. Thus for $j\geq 1$, $A_j(y,x'\xi')\equiv 0$ when $\lambda(y,x',\xi')\leq \frac{\delta_0}{2}$.  We define the particular class of symbols in $S_{\partial}^j$.
\begin{definition}
	$$ \mathcal{E}_{\partial}^j:=\left\{a\in S_{\partial}^j:|(h\partial_y)^l\partial^{\alpha}_{x',\xi'}a(y,x',\xi')|\leq C_{l,\alpha}e^{-\frac{C'_{l,\alpha}y}{h}} \right\}.
	$$
\end{definition}
\begin{lemma}\label{exponentialsymbolchapter1}
	The symbols constructed above can be chosen to satisfy $A_j\in \mathcal{E}_{\partial}^{-j}$ for all $j\in\mathbb{N}$.
\end{lemma}
The proof will be given in appendix.

In summary, we have $A\simeq \sum_{j\geq 0}h^jA_j$, and a tangential symbol $B_{\delta_0}(y,x',\xi')$ compactly supported in $\lambda(y,x',\xi')\leq \frac{\delta_0}{2}$, such that
\begin{equation*}
\begin{split}
\varphi P_0A(y,x',hD_{x'})(\varphi_{1,0}q_0)=&\varphi B_{\delta_0}(y,x',hD_{x'})(\varphi_{1,0}q_0)+O_{H^{\infty}}(h^{\infty}),\\
\varphi_0A(0,x',hD_{x'})(\varphi_{1,0}q_0)=&\varphi_0\textrm{Op}_h(\psi_{\delta}(\lambda))(\varphi_{1,0}q_0)+O_{H^{\infty}}(h^{\infty}).
\end{split}
\end{equation*} 
The following proposition states that the parametrix constructed above is an approximation of the pressure $q$ in the relevant semi-classical scale.
\begin{proposition}\label{parametrixpressurechapter1}
	There exists $A\in S_{\partial}^0$ with principal symbol
	$$ A_0(y,x',\xi')=\exp\left(-\frac{y\lambda(y,x',\xi')}{h}\right)\psi_{\delta_0}(\lambda(y,x',\xi'))\varphi_1(y,x'),
	$$	
	which satisfies asymptotic expansion $\displaystyle{A\sim \sum_{j\geq 0}h^jA_j, A_j\in \mathcal{E}_{\partial}^{-j}}$. Moreover, for any $$\varphi,\varphi_1\in C_c^{\infty}(Y_+), \varphi_1|_{\mathrm{supp }(\varphi )}\equiv 1,$$ we have $\varphi\mathrm{Op}_h(\chi_{\delta_0}A_j)(\varphi_{1,0}q_0)=O_{L^2(\mathbb{R}_+^{d})}(1)$ for all $j$, and
	\begin{equation*}
	\begin{split}
	&\varphi\mathrm{Op}_h(\chi_{\delta_0})(\varphi_1 q)=\varphi\mathrm{Op}_h(\chi_{\delta_0}A)(\varphi_{1,0}q_0)+O_{L^2(\mathbb{R}_+^d)}(h^{3/4}),\\
	&\varphi\mathrm{Op}_h(\chi_{\delta_0})h\partial_y(\varphi_1q)=\varphi\mathrm{Op}_h(\chi_{\delta_0}\lambda A)(\varphi_{1,0}q_0)+O_{L^2(\mathbb{R}_+^d)}(h^{3/4}),\\
	&\varphi\mathrm{Op}_h(\chi_{\delta_0})h\partial_y(\varphi_1q)=\varphi\mathrm{Op}_h(\chi_{\delta_0}\lambda A)(\varphi_{1,0}q_0)+O_{H^\frac{2}{3}(\mathbb{R}_+^d)}(h^{1/4}),
	\end{split}
	\end{equation*}
	where $\varphi_0=\varphi|_{\partial\Omega},\varphi_{1,0}=\varphi_1|_{\partial\Omega},\chi_{\delta_0,0}=\chi_{\delta_0}|_{y=0}$.
\end{proposition}
We postpone the proof of this proposition in appendix. A direct consequence is that the singularities of the family of pressures $(q_h)$ must concentrate in a very thin strip near the boundary.
\begin{lemma}\label{concentration1chapter1}
	With the same $\chi_{\delta_0}\in C_c^{\infty}(Y_+\times\mathbb{R}^{d-1})$ and  $\varphi_1,\varphi\in C_c^{\infty}(Y_+)$, for any $0<y_0< \epsilon_0$, we have
	$$ \int_{y_0}^{\epsilon_0}\|\varphi\mathrm{Op}_h(\chi_{\delta_0})(\varphi_1q)\|_{L^2(\mathbb{R}^{d-1})}^2dy\leq C_{\delta_0}(e^{-\frac{cy_0}{h}}+h),
	$$
	where the constant $C_{\delta_0}$ only depends on $\delta_0$ and is independent of $y_0$ and $h$.
\end{lemma}
\begin{proof}
	The second term appearing on the right hand side comes from all the possible remainder terms. It suffices to estimate the term $$\displaystyle{\int_{y_0}^{\epsilon_0}\|\varphi\mathrm{Op}_h(\chi_{\delta_0}A_0)(\varphi_1 q_0)\|_{L^2(\mathbb{R}^{d-1})}^2dy}.$$
	Since $\varphi_{1,0}q_0=O_{L^2(\mathbb{R}^{d-1})}(h^{-1/2})$, micro-locally, we have for each fixed $y>0$ that
	\begin{equation*}
	\begin{split}
	\|\varphi\mathrm{Op}_h(\chi_{\delta_0}A_0)(\varphi_{1,0} q_0)\|_{L^2(\mathbb{R}^{d-1})}\leq &Ch^{-1/2}\sum_{|\beta|\leq Cd}h^{\frac{|\beta|}{2}}\sup_{y>0,(x',\xi')}|\partial_{x',\xi'}^{\beta}(\chi_{\delta_0}A_0)|+O(h^{\infty})\\
	\leq &Ch^{-1/2}e^{-\frac{cy}{h}}\Big(1+\sum_{1\leq m,n\leq Cd}h^{m/2}\big(\frac{y}{h}\big)^{n}\Big)+O(h^{\infty}).
	\end{split}
	\end{equation*}
	Squaring and Integrating the right hand side in $y$ variable yields the desired conclusion.
\end{proof}

\section{Main Steps of the Proof}
The proof of Theorem \ref{Stchapter1} can be divided into several steps according to different geometric situations. We want to show that for any given point $\rho_0\in ^bT^*\ov{\Omega}$, if $\rho_0\notin $ supp $\mu$, then $\gamma(s,\rho_0)\notin $ $\mathrm{supp}(\mu)$ for any $s>0$. 
The first step is to show that if $\rho_0\in T^*\Omega$, $\rho_0\notin $ $\mathrm{supp}(\mu)$,  then $\gamma(s,\rho_0)\notin $  $\mathrm{supp}(\mu)$ for all $s>0$ provided that $\pi(\gamma(\cdot,\rho_0)|_{[0,s]})\cap \partial\Omega=\emptyset$. This can be summarized by the following proposition, in which we have stronger conclusion that the measure is also invariant under the flow.
\begin{proposition}\label{interior propagationchapter1}
	For any real-valued scalar function $a\in C_c^{\infty}(\Omega\times\mathbb{R}^d)$ vanishing near $\xi=0$, we have
	$$ \frac{d}{ds}\langle\mu,a\circ \gamma(s,\cdot)\rangle=0.
	$$
\end{proposition}
%We prove this proposition in the present section since it is rather easy.
\begin{proof}
	Let $A=\textrm{Op}_h(a)$ and $P=-h^2\Delta-1$. Applying the equation and Lemma \ref{press.normchapter1}, we have  
	\begin{equation}
	\begin{split}
	\frac{i}{h}\left([P,A]u|u \right)_{\Omega}&= \frac{i}{h}(Au|Pu)_{\Omega}-\frac{i}{h}(APu|u)_{\Omega}\\
	&=\frac{i}{h}(Au|f-h\nabla q)_{\Omega}-\frac{i}{h}(A(f-h\nabla q)|u)_{\Omega}\\
	&=-\frac{i}{h}(Au|h\nabla q)_{\Omega}+\frac{i}{h}(Ah\nabla q|u)_{\Omega}+o(1)\\
	&=-\frac{i}{h}([A,h\textrm{div }]u|q)_{\Omega}+\frac{i}{h}([A,h\nabla]q|u)_{\Omega}+o(1)\\
	&=i(\textrm{Op}_h(\nabla a)\cdot u|q)_{\Omega}-i(\textrm{Op}_h(\nabla a)q|u)_{\Omega}+o(1)\\
	&=i(u|\textrm{Op}_h(\nabla \ov{a})q)_{\Omega}-i(\textrm{Op}_h(\nabla a)q|u)_{\Omega}+o(1).
	\end{split}
	\end{equation}
	where we have used integration by part freely, thanks to the fact that $A$ has compact support in $x\in\Omega$. 
	Now we claim that for any $\chi=a$ or $\ov{a}$, vanishing near $\xi=0$, we have
	$$ (u|\textrm{Op}_h(\nabla \chi)q)_{\Omega}=o(1).
	$$
Indeed, $q=O_{L^2(\Omega)}(1)$ micro-locally away from $\xi=0$ since $h\nabla q=O_{L^2}(1)$. Moreover, $h^2\Delta(\textrm{Op}_h(\nabla \chi)q)=O_{L^2(\Omega)}(h)$ and this implies that $\textrm{Op}_h(\nabla \chi)q=o_{L^2}(1)$ since the symbol of $h^2\Delta\textrm{Op}_h(\nabla \chi)$ vanishes near $\xi=0$ as well as $x$ near the boundary. 	In view of the definition of $\mu$, this completes the proof.
\end{proof}
For the second step, we need prove that if $\rho_0\in\mathcal{E}$, then $\mu=0$ in a neighborhood of $\rho_0$.

\begin{proposition}\label{ellipticchapter1}
	$\mu\mathbf{1}_{\mathcal{E}}=0$.  Furthermore, for $\nu$, the semi-classical defect measure of the sequence $(h_k\partial_{\mathbf{\nu}}u_k|_{\partial\Omega},h_k)$, we have $\nu\mathbf{1}_{\mathcal{E}}=0$. 
\end{proposition}

The third step consists of proving that after reflection near a hyperbolic point, the measure $\mu$ is still zero. 
\begin{proposition}\label{hyperbolicchapter1}
	Suppose $\rho_0\notin \mathrm{supp}(\mu)$ and there exists $s_0>0$ such that $\gamma(s_0,\rho_0)\in\mathcal{H}$ and $\pi(\gamma(s,\rho_0))\in \Omega$ for all $0\leq s<s_0$. Then there exists $\delta>0$ such that $$\pi(\gamma(\cdot,\rho_0)|_{[s_0,s_0+\delta]})\cap \mathrm{supp }(\mu)=\emptyset.$$
\end{proposition}

Next step is to prove the propagation near a diffractive point.
\begin{proposition}\label{diffractivechapter1}
	Suppose $\rho_0\notin\mathrm{supp }(\mu)$ and there exists $s_0>0$ such that $\gamma(s_0,\rho_0)\in\mathcal{G}^{2,+}$ and $\pi(\gamma(s,\rho_0))\in\Omega$ for all $0\leq s<s_0$. Then $\gamma(s_0,\rho_0)\notin\mathrm{supp }(\mu).$
\end{proposition}

To deal with higher order contact, we will use induction. First let us introduce
\begin{definition}[$k$-propagation property]
	For $k\geq 2$, we say that $k$-propagation property holds, if along generalized ray $\gamma(s,\rho_0)$,  the following statement is true: 
	For some $\sigma_0>0$, if $\gamma(\cdot,\rho_0)|_{[0,\sigma_0)}\cap \mathrm{supp }(\mu)=\emptyset$  (or $\gamma(\cdot,\rho_0)|_{(-\sigma_0,0]}\cap \mathrm{supp }(\mu)=\emptyset$) and $\displaystyle{\gamma(\sigma_0,\rho_0)\in \bigcup_{2\leq j\leq k}\mathcal{G}^j} $ (or $\displaystyle{\gamma(-\sigma_0,\rho_0)\in \bigcup_{2\leq j\leq k}\mathcal{G}^j} $), then $\gamma(\sigma_0,\rho_0)\notin \mathrm{supp }(\mu)$ (or $\gamma(-\sigma_0,\rho_0)\notin \mathrm{supp }(\mu)$ ).
\end{definition}

The last step for the proof of Theorem \ref{Stchapter1} can be reduced to 
\begin{proposition}\label{highorderchapter1}
	$k$-propagation property holds for all $k\geq 2$.
\end{proposition}
%%%%%%%%%%%%%%%%%%%%%%%%%%%%%%%%%%%%%%%%%%%
\section{Near $\mathcal{E}$}
This section is devoted to the proof of Proposition \ref{ellipticchapter1}. We set
$ Q(y,x',\xi'):=\sqrt{\lambda^2-1}$ and define $Q_h=\varphi \mathrm{Op}_h(Q\chi_0)\varphi_1$ with $\chi_0\in C_c^{\infty}(\mathbb{R}_{\xi'}^{d-1})$ with support near $\mathcal{E}$ in which $1+\delta<\lambda<C$. With a bit abuse of notation, we refer $q_0,q$ to be $\varphi\mathrm{Op}_h(\chi_0)\varphi_1q_0,\varphi\mathrm{Op}_h(\chi_0)\varphi_1q$ and $u$ to be $\varphi\mathrm{Op}_h(\chi_0)\varphi_1 u$. In this manner,  we can combine the parametrix in last section to write the system \eqref{WStokesmainchapter1} as
\begin{equation}
\left\{
\begin{aligned}
& (-h^2\partial_y^2+Q_h^2)u_{\pa,j}+g^{jk}h\partial_{x'_k}(\textrm{Op}_h(A_0)q_0)=R_{\pa,j}=O_{L^2(\mathbb{R}_+^d)}(h),\\
& (-h^2\partial_y^2+Q_h^2)u_{\pe}+h\partial_y(\textrm{Op}_h(A_0)q_0)=R_{\pe}=O_{L^2(\mathbb{R}_+^d)}(h).
\end{aligned}
\right.
\end{equation}
Note that the symbol $A_0(y,x',\xi')$ is defined in last section which equals to  $e^{-\frac{y\lambda}{h}}$  since $\lambda>1$. 
Take $\psi\in C^{\infty}(\mathbb{R}_+),$ with $\psi|_{[0,\epsilon_0]}\equiv 1, \psi_{[2\epsilon_0,\infty)}\equiv 0$. Denoting the extended distributions of $u$ by $w=(w_{\pa},w_{\pe})=(u_{\pa},u_{\pe})\psi(y)\mathbf{1}_{y\geq 0}$, we have from standard elliptic parametrix construction (see Appendix A) that modulo $O_{H^{\infty}(\mathbb{R}_+^d)}(h^{\infty})$,
\begin{equation}
\left\{
\begin{aligned}
& w_{\pa,j}=E(y,x',hD_y,hD_{x'})(-\psi(y)g^{jk}h\partial_{x'_k}(\textrm{Op}_h(A_0)q_0)+ hv_j\otimes \delta_{y=0}
+\psi(y)R_{\pa,j}),\\
& w_{\pe}=E(y,x',hD_y,hD_{x'})(-h\psi(y)\partial_y(\textrm{Op}_h(A_0)q_0)+\psi(y)R_{\pe}).
\end{aligned}
\right.
\end{equation}
where $v=h\partial_y u_{\pa}|_{y=0}=O_{L_{x'}^2}(1)$. Recall that the principal symbol of $E$ is given by
$$ E^0:=\frac{\chi_0(\xi')\varphi(y,x')}{\eta^2+\lambda(y,x',\xi')^2-1},
$$
Now we need a lemma which deals with the trace of error terms:
\begin{lemma}\label{errortracechapter1}
	Assume that $R=\varphi\mathrm{Op}_h(\chi_0)\varphi_1R+O_{H^{\infty}}(h^{\infty})$, then if  $\|\psi(y)R\|_{L^2(\mathbb{R}_+^d)}=O(h), $ we have $$\|E(y,x',hD_y,hD_{x'})(\psi(y)R)|_{y=0}\|_{L^2(\mathbb{R}_{x'}^{d-1})}=O(h^{1/3}).$$ 
\end{lemma}
\begin{proof}
	From the parametrix construction above, we know that
	$$ |\partial_{y,x',\eta,\xi'}^{\alpha}E(y,x';\eta,\xi')|\leq \frac{C_{\alpha}}{\eta^2+Q(y,x',\xi')^2}.
	$$	
	Therefore, the symbols $\eta E(y,x';\eta,\xi')$ and $ \lambda(y,x',\xi')E(y,x';\eta,\xi')$ are uniformly bounded in $S^0$. Thus $E(y,x';hD_y,hD_{x'})(\psi R)=O_{L^2(\mathbb{R}_+^d)}(h)=O_{H^1(\mathbb{R}_+^d)}(1)$, and from interpolation, we have $E(y,x';hD_y,hD_{x'})(\psi R)=O_{H^{2/3}(\mathbb{R}_+^d)}(h^{1/3})$. The conclusion then follows from trace theorem that $H^s(\mathbb{R}_+^d)\rightarrow H^{s-1/2}(\mathbb{R}^{d-1})$ is bounded for $s>1/2$. 
\end{proof}
\begin{proof}[Proof of Proposition \ref{ellipticchapter1}]
	Denote by $\mathcal{F}_h(q_0)=\theta$ the semi-classical Fourier transform of $q_0$, we calculate
	\begin{equation}\label{elliptic1chapter1}
	\begin{split}
	&E(y,x',hD_y,hD_{x'})(\psi(y)h\partial_y\mathrm{Op}_h(A_0)q_0)\\
	&=\frac{1}{(2\pi h)^d}\iint e^{\frac{i(y-z)\eta}{h}}dzd\eta\int
	\frac{e^{\frac{ix'\xi'}{h}}\theta(\xi')\varphi(y,x')\chi_0(\xi')}{\eta^2+Q^2(y,x',\xi')}\psi(z)h\partial_z(
	e^{-\frac{z\lambda(z,x',\xi')}{h}})d\xi'+R_1 \\
	&=-\frac{h}{(2\pi h)^d}\iint \frac{\langle\xi'\rangle\theta(\xi')e^{\frac{i(y\eta+x'\xi')}{h}}B_1(\eta,x',\xi')\varphi(y,x')\chi_0(\xi')}{\eta^2+Q^2(y,x',\xi')}d\eta d\xi' \\
	&-\frac{h^2}{(2\pi h)^{d}}\int e^{\frac{ix'\xi'}{h}}\theta(\xi')d\xi'\int\frac{e^{\frac{iy\eta}{h}}B_0(\eta,x',\xi')\varphi(y,x')\chi_0(\xi')}{\eta^2+Q(y,x',\xi')^2}d\eta
	+R_1,
	\end{split}
	\end{equation}
	with reminder term $R_1=O_{L^2(\mathbb{R}^d)}(h)$, where $\lambda_0=\lambda|_{y=0}$,
	$$ B_1(\eta,x',\xi')=\int_0^{\infty}\psi(z)e^{-\left(\frac{i\eta+\lambda(z,x',\xi')}{h}\right)z}\frac{\lambda(z,x',\xi')}{\langle\xi'\rangle}
	\frac{1}{h}dz,
	$$
	and
	$$ B_0(\eta,x',\xi')=\int_0^{\infty}\psi(hz)z(\partial_z\lambda)(hz,x',\xi')e^{-(i\eta+\lambda(hz,x',\xi'))z}dz.
	$$
	We notice that
	$$ K_0(y,x',\xi'):=\int\frac{e^{\frac{iy\eta}{h}}B_0(\eta,x',\xi')\varphi(y,x')\chi_0(\xi')}{\eta^2+Q(y,x',\xi')^2}d\eta
	$$
	is a bounded symbol in $S_{\xi'}^{0}$.  Thus the second term on the right hand side of \eqref{elliptic1chapter1} is equal to $R_2=O_{C^0(\mathbb{R}_y;L^2(\mathbb{R}^{d-1}_{x'}))}(h)$ and we may concentrate on the first term.
		Write
	$$ B_1(\eta,x',\xi')=\int_0^{\infty}\psi(hz)e^{-(i\eta+\lambda(hz,x',\xi'))z}\frac{\lambda(hz,x',\xi')}{\langle\xi'\rangle}dz. 
	$$
	Taylor expansion gives
	\begin{equation*}
	\begin{split}
	e^{-\lambda(hz,x',\xi')z}\lambda(hz,x',\xi')\psi(hz)=&e^{-\lambda_0(x',\xi')z}\lambda_0(x',\xi')+\int_0^1\frac{d}{dt}\left(e^{-\lambda(htz,x',\xi')z}\lambda(htz,x',\xi')\psi(htz)\right)dt\\
	=&e^{-\lambda_0(x',\xi')z}\lambda_0(x',\xi')+h\int_0^1P_t(z,x',\xi')e^{-\lambda(htz,x',\xi')z}dt
	\end{split}
	\end{equation*}
	with
	$$ P_t(z,x',\xi')=-z^2(\lambda\partial_y\lambda)(htz,x',\xi')\psi(htz)+z(\partial_y\lambda)(htz,x',\xi')\psi(htz)+z\lambda(htz,x',\xi')\psi'(htz).
	$$
	Thus we have $\displaystyle{B_1(\eta,x',\xi')=\frac{\lambda_0(x',\xi')}{(i\eta+\lambda_0(x',\xi'))\langle\xi'\rangle}+h\widetilde{B}_1(\eta,x',\xi')}$, where $$\widetilde{B}_1(\eta,x',\xi')=\int_0^1\int_0^{\infty}e^{-(i\eta+\lambda(htz,x',\xi'))z}\frac{1}{\langle\xi'\rangle}P_t(z,x',\xi')dzdt.$$ 
	Note that near a point in $\mathcal{E}$, $|\partial^{\alpha}_{x'}\partial^{\beta}_{\xi'}P_t(z,x',\xi')|\leq C_{\alpha\beta}z^2$, independent of $t,h$, hence the symbol
	$$ \widetilde{K}_1(y,x',\xi')=\int\frac{e^{\frac{iy\eta}{h}}\varphi(y,x')\chi_0(\xi')}{\eta^2+Q(y,x',\xi')^2}\widetilde{B}_1(\eta,x',\xi')d\eta \in S_{\xi'}^0.
	$$
	Therefore, the symbol in the principal term of $E(y,x',hD_y,hD_{x'})(\psi(y)h\partial_y\mathrm{Op}_h(A_0)q_0)$ equals to 
	\begin{equation*}
	\begin{split}
	K_1(y,x',\xi')=&\lambda_0(x',\xi')\int\frac{e^{\frac{iy\eta}{h}}\varphi(y,x')\chi_0(\xi')}{(\eta^2+Q(y,x',\xi')^2)(i\eta+\lambda_0(x',\xi'))}d\eta+h\widetilde{K}_1(y,x',\xi')\\
	=&2\pi\lambda_0\varphi(y,x')\chi_0(\xi')\left(\frac{e^{-\frac{yQ}{h}}}{2(\lambda_0-Q)Q}-\frac{e^{-\frac{y\lambda_0}{h}}}{\lambda_0^2-Q^2}\right)+h\widetilde{K}_1(y,x',\xi')\\
	=&2\pi\lambda_0\varphi(y,x')\chi_0(\xi')\left(\frac{e^{-\frac{yQ}{h}}-e^{-\frac{y\lambda_0}{h}}}{2(\lambda_0-Q)Q}+\frac{e^{-\frac{y\lambda_0}{h}}}{2Q(\lambda_0+Q)}\right)+h\widetilde{K}_1(y,x',\xi').
	\end{split}
	\end{equation*}
	Note that
	$$ E_1(y,x',\xi')=2\pi\lambda_0\varphi(y,x')\chi_0(\xi')\left(\frac{e^{-\frac{yQ}{h}}-e^{-\frac{y\lambda_0}{h}}}{2(\lambda_0-Q)Q}+\frac{e^{-\frac{y\lambda_0}{h}}}{2Q(\lambda_0+Q)}\right)>0
	$$
	near $\mathcal{E}$, we have 
	$$ E(y,x',hD_y,hD_{x'})(\psi(y)h\partial_y\mathrm{Op}_h(A_0)q_0)=E_1(y,x',hD_{x'})q_0+R_1+R_2,
	$$
	with $R_2=O_{C(\mathbb{R}_y;L^2(\mathbb{R}_{x'}^{d-1}))}(h).$
	We claim that $R_1=O_{H^1(\mathbb{R}^d)}(1)$. Once it is justified, by interpolation, we have $\|R_1\|_{H^{2/3}(\mathbb{R}^d)}=O(h^{1/3})$. To verify the claim, we note that the symbol of the reminder term $R_1$ is of the  form $hS^{-1}$ (in both $\eta$ and $\xi'$ variables), hence the symbolic calculus yields $\partial_yR_1=O_{L^2(\mathbb{R}^d)}(1), $ and $ \partial_{x'}R_1=O_{L^2(\mathbb{R}^d)}(1)$.
	
	We next calculate the parallel component
	\begin{equation}
	\begin{split}
	&\frac{1}{(2\pi h)^d}\iint\psi(z)e^{\frac{i(y-z)\eta}{h}}dzd\eta\int \frac{e^{\frac{ix'\xi'}{h}-\frac{z\lambda(z,x',\xi')}{h}}\theta(\xi')g^{jk}(z,x')\xi'_k\varphi(y,x')\chi_0(\xi')}{\eta^2+Q(y,x',\xi')^2}d\xi'\\
	&=\frac{1}{(2\pi h)^d}\iint \frac{e^{\frac{i(x'\xi'+y\eta)}{h}}\xi'_k\theta(\xi')\varphi(y,x')\chi_0(\xi')}{\eta^2+Q(y,x',\xi')^2}d\eta d\xi'\int_0^{\infty}\psi(z)e^{-\frac{i\eta+\lambda(z,x',\xi')}{h}z}g^{jk}(z,x')dz\\
	&=\frac{h}{(2\pi h)^d}\iint \frac{e^{\frac{i(x'\xi'+y\eta)}{h}}\xi'_k\theta(\xi')\varphi(y,x')\chi_0(\xi')}{(\eta^2+Q(y,x',\xi')^2)}B_{2,jk}(\eta,x',\xi')d\eta d\xi'\\
	&=:E_2(y,x',hD_{x'})q_0.
	\nonumber
	\end{split}
	\end{equation}
	where
	$$ B_{2,jk}(\eta,x',\xi')=\int_0^{\infty}\psi(z)e^{-\frac{i\eta+\lambda(z,x',\xi')}{h}z}g^{jk}(z,x')
	\frac{1}{h}dz.
	$$
	Define
	$$ K_{2,jk}(y,x',\xi')=\int\frac{e^{\frac{iy\eta}{h}}B_{2,jk}(\eta,x',\xi')\varphi(y,x')\chi_0(\xi')}{\eta^2+Q(y,x',\xi')^2}d\eta,
	$$
	and from similar argument we have
	$$ K_{2,k}(y,x',\xi')=g^{jk}(0,x')\int\frac{e^{\frac{iy\eta}{h}}\varphi(y,x')\chi_0(\xi')}{(\eta^2+Q(y,x',\xi')^2)(i\eta+\lambda_0(x',\xi'))}+h\widetilde{K}_2(y,x',\xi')
	$$ 
	and the principal symbol of $E_2(y,x',hD_{x'})$ is elliptic if $\lambda_0(\xi')>1 $ and $y$ small enough. Finally,
	\begin{equation}
	\begin{split}
	&E(y,x',hD_y,hD_{x'})(hv\otimes\delta_{y=0})\\=&
	\frac{h}{(2\pi h)^d}\iint\frac{\mathcal{F}_h(v)(\xi')e^{\frac{i(y\eta+x'\xi')}{h}}\varphi(y,x')\chi_0(\xi')}{\eta^2+Q(y,x',\xi')^2}d\xi'd\eta+O_{L^2(\mathbb{R}^d)}(h)\\
	=&\frac{h}{(2\pi h)^d}\int\mathcal{F}_h(v)(\xi')e^{\frac{ix'\xi'}{h}}\frac{\pi e^{-\frac{yQ(y,x',\xi')}{h}}\varphi(y,x')\chi_0(\xi')}{Q(y,x',\xi')}d\xi'+O_{L^2(\mathbb{R}^d)}(h)\\
	=&:E_3(y,x',hD_{x'})v+O_{L^2(\mathbb{R}^d)}(h),
	\end{split}
	\end{equation}
	and again, $E_3(y,x',hD_{x'})$ is elliptic near $\lambda_0(\xi')>1$. Moreover, we deduce from the same argument as for $R_1$ that the reminder terms are indeed of   
	$O_{H^{2/3}(\mathbb{R}^d)}(h^{1/3})$. Now the boundary condition $(w_{\pe},w_{\pa})|_{y=0}=0$ and trace theorem yields 
	$$ E_1(0,x',hD_{x'})q_0=O_{L^2(\mathbb{R}_{x'}^{d-1})}(h^{1/3}),$$
	$$ E_2(0,x',hD_{x'})q_0+E_3(0,x',hD_{x'})v=O_{L^2(\mathbb{R}_{x'}^{d-1})}(h^{1/3}).
	$$
	Therefore, from the ellipticity of $E_1,E_2,E_3$, the measure of pressure at the elliptic region vanishes, so does the measure of $v$, namely $\nu|_{\mathcal{E}}=0$.
	The proof of Proposition \ref{ellipticchapter1} is complete.
	
\end{proof}

%%%%%%%%%%%%%%%%%%%%%%%%%%%%%%%%%%%%%%%%%%%%%
\section{Near $\mathcal{H}$}
We take $\varphi_1,\varphi\in C_c^{\infty}(Y_+)$ such that $\varphi_1|_{\textrm{supp}(\varphi)}\equiv 1$. For any tangential symbol $b\in C_c^{\infty}(Y_+\times\mathbb{R}^{d-1})$, we define the pseudo-differential operator $B_h=\varphi\mathrm{Op}_h(b)\varphi_1$, with compact support in $Y_+$. We will change the notation of tangential variables $(x',\xi')$ to $(x,\xi)$. We always work in local coordinate $(y,x)$ and sometimes abuse the notation $u=\varphi_1u,q=\varphi_1 q$ as compactly supported functions in $Y_+$. Note that $q_0$, the trace of $q$ is not bounded in $L^2$ in priori. Fortunately, it turns out that $q_0=O_{L^2}(1)$, micro-locally near a point in $ \mathcal{H}$. 
\subsection{$L^2$ bound of boundary datums}
Take $b(y,x,\xi),b_1(y,x,\xi)\in C_c^{\infty}([0,\epsilon_0)\times \mathcal{H})$, such that $b_1|_{[0,\epsilon_0/2)\times supp(b)}\equiv 1$. Let $Q(y,x,\xi)=\sqrt{1-\lambda(y,x,\xi)^2}b_1(y,x,\xi)$. We will first factorize the operator $(-h^2\Delta-1)$ near a hyperbolic point.
\begin{lemma}\label{hyper-factorzichapter1}
	For $0\leq y<\epsilon_0$, we have
	$$ B_h(-h^2\Delta-1)=-(hD_y-Q_h^+)(hD_y-Q_h^-)+R'=-(hD_y-Q_h^-)(hD_y-Q_h^+)+R'',
	$$
	where $R',R''\in C^{\infty}([0,\epsilon_0],h^{\infty}\Psi^{-\infty}(\partial\Omega))$, and
	$Q_h^{\pm}$ have principal symbol $\pm Q(y,x,\xi)$.
\end{lemma}
\begin{proof}
	The proof is quite standard, and we follow the construction in \cite{Burq-Lebeau2001} by translating word by word to the semi-classical setting. In local coordinate, we have
	$$ B_h(-h^2\Delta-1)=h^2D_y^2+R(y,x,hD_{x})+hM_1(y,x')hD_y+hM_0(y,x)hD_x
	$$
	with $\sigma(R)=Q^2$. Set $q_1^{+}=\sqrt{Q}(y,x',\xi')$, $Q_1^+=\textrm{Op}_h(q_1^+)$ and $Q_1^-=-Q_1^+-hM_1$.
	Direct calculation gives
	\begin{equation*}
	\begin{split}
	(hD_y-Q_1^{+})(hD_y-Q_1^-)&=h^2D_y^2-(Q_1^+)^2-hQ_1^+M_1-(Q_1^++Q_1^-)hD_y-\frac{h}{i}\partial_y(Q_1^-)\\
	&=h^2D_y^2-(Q_1^+)^2+hM_1hD_y-h(Q_1^+M_1-i\partial_y(Q_1^-)).
	\end{split}
	\end{equation*}
	Thus $B_h(-h^2\Delta-1)-(hD_y-Q_1^+)(hD_y-Q_1^-)=hT_1,
	$
	with some operator $T_1$, bounded in $L^2$.
	Now for $j\geq 1$, suppose that we have
	$$ B_h(-h^2\Delta-1)-(hD_y-Q_j^+)(hD_y-Q_j^-)=h^jT_j,
	$$
	by setting $Q_{j+1}^{\pm}:=Q_j^{\pm}+h^jS_{j+1}^{\pm}$ with 
	$S_{j+1}^++S_{j+1}^-=0$ and $\displaystyle{ \sigma(S_{j+1}^+)=\frac{\sigma(T_j)}{2\sigma(Q_j^+)}},
	$ we obtain that
	\begin{equation*}
	\begin{split}
	&B_h(-h^2\Delta-1)-(hD_y-Q_{j+1}^+)(hD_y-Q_{j+1}^-)\\=&
	h^jT_j+h^j(S_{j+1}^+Q_j^-+Q_j^+S_{j+1}^-)-h^j(S_{j+1}^++S_{j+1}^-)hD_y
	-\frac{h^{j+1}}{i}\partial_y(S_{j+1}^-)+h^{2j}S_{j+1}^+S_{j+1}^-\\
	=:&h^{j+1}T_{j+1},
	\end{split}
	\end{equation*}
	for some operator $T_{j+1}$ bounded in $L^2$. Thus the proof can be completed by induction. 
\end{proof}
Define $w=\varphi_1u-h\nabla (\varphi_1q)$, $w^{\pm}=B_h(hD_y-Q_h^{\pm})w$ and its boundary values $w_{0}^{\pm}:=w^{\pm}|_{y=0}$. Note that $\varphi P_hw=\varphi f$. 

\begin{proposition}\label{boundchapter1}
	$\|B_hh\partial_y w_{\pe}\|_{L^2(\mathbb{R}_+^d)}=O(1),$ and consequently,
	$\|w_{\pe}^{\pm}\|_{L^2(\mathbb{R}_+^d)}=O(1).$
\end{proposition}
\begin{proof}
	From $h\textrm{div }u=0$, we have $\varphi h\textrm{div }w=0$, hence
	$$ \varphi (h\partial_y w_{\pe}+h\textrm{div}_{\pa}w_{\pa})=O_{L^2(\mathbb{R}_+^d)}(h),
	$$
	where in local coordinates,
	$$ \textrm{div}_{\pa}w_{\pa}=\frac{1}{\sqrt{\det(g)}}\sum_{j=1}^{d-1}\partial_{x_j}
	(\sqrt{\det(g)}w_{\pa,j}).
	$$
	Therefore,
	$$\|B_hh\partial_y w_{\pe}\|_{L^2(\mathbb{R}_+^d)}\leq O(h)+\|B_hh\textrm{div}_{\pa} w_{\pa}\|_{L^2(\mathbb{R}_+^d)}=O(1).
	$$
\end{proof}
Now we recall the following hyperbolic energy estimate.
\begin{lemma}\label{hyper-estimateschapter1}
	Suppose $A_{h}=\mathrm{Op}_{h}(a)$ is ellptic (with real-valued symbol $a$ smoothly depending on $t$) of order $0$ on a compact manifold $M$ and $w$ are solutions of the $h$-dependent equations
	$$ (hD_t\pm A_{h})w=g,\quad (t,x)\in \mathbb{R}\times M.$$
	Assume that for any compact time interval $I$ and small $h$,$$\|w\|_{L^2(I\times M )}\leq C(I),\quad \|g\|_{L^2(I\times M )}\leq C(I)h,$$ then we have for all small $h$,
	$$ \sup_{t\in I'}\|w(t)\|_{L^2(M)}\leq C(I'),\;\forall I'\subset I\textrm{compact}.
	$$
\end{lemma}
\begin{proof}
	By symmetry, it suffices to treat the case $hD_t-A_h$. Take $\chi(t)\in C_c^{\infty}(I')$, and we may assume that $0\in I'$ with $\chi(0)=1$. Multiplying by $\chi(t)$ to the equation, we have
	$$ (hD_t- A_h)(\chi w)=\chi g+[\chi,hD_t-A_h]w=:r=O_{L^2(\mathbb{R}\times M)}(h).
	$$
	We now calculate
	\begin{equation}
	\begin{split}
	h\frac{d}{dt}(\chi w|\chi w)(t)_{L^2(M)}&=(ihD_t\chi w|\chi w)_{L^2(M)}+(\chi w|ihD_t\chi w)_{L^2(M)}\\
	&=i(A_h(\chi w)+r|\chi w)_{L^2(M)}-i(\chi w|A_h(\chi w)+r)_{L^2(M)}\\
	&=i((A_h-A_h^*)\chi w|\chi w)_{L^2(M)}+i( r|\chi w)_{L^2(M)}-i(\chi w| r)_{L^2(M)}	\nonumber
	\end{split}
	\end{equation}
	Integrating the formula above from $0$ to $\sup I'$, we finally have
	$ \|w(0)\|_{L^2(M)}^2=O(1).
	$
\end{proof}

\begin{lemma}
	$\|w_{0}^{\pm}\|_{L^2(\mathbb{R}^{d-1})}=O(1)$.
\end{lemma}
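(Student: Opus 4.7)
The plan is to reduce to an application of the semi-classical hyperbolic estimate (Lemma \ref{hyper-estimates}) applied to $w^{\pm}$. For this I need two ingredients: (i) $\|w^{\pm}\|_{L^2_{y,x}}=O(1)$, and (ii) $\|(hD_y-Q_h^{\mp})w^{\pm}\|_{L^2_{y,x}}=O(h)$. The crucial algebraic observation is that $w=u-h\nabla q$ satisfies $(-h^2\Delta-1)w=f$ because $\Delta q=0$ (taking the divergence of the Stokes equation kills the pressure in the interior), so $B_h(-h^2\Delta-1)w=B_h f=o_{L^2}(h)$. Once (i) and (ii) are in hand, Lemma \ref{hyper-estimates} applied with $t=y$ on a compact subinterval of $[0,\epsilon_0)$ (the symbols of $Q_h^{\pm}$ are real and, on the support of $b_1$, bounded away from zero in $\mathcal H$) yields $\sup_{y}\|w^{\pm}(y)\|_{L^2_x}=O(1)$, and in particular $\|w_0^{\pm}\|_{L^2_x}=O(1)$.

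For (i), write $w^{\pm}=B_hhD_yw-B_hQ_h^{\pm}w$. The second term is $O_{L^2}(1)$ since $B_hQ_h^{\pm}$ is tangential of order $0$ with compactly supported symbol and $w=u-h\nabla q$ is $O_{L^2}(1)$. For the first term, split into tangential and normal components: $\|B_hhD_yw_{\pe}\|_{L^2}=O(1)$ is exactly Proposition \ref{bound}, while for $w_{\pa}=u_{\pa}-h\nabla_{x'}q$ one uses $\|h\partial_yu\|_{L^2}=O(1)$ and the fact that $B_hh\nabla_{x'}=\mathrm{Op}_h(b\cdot i\xi')+O(h)$ is bounded on $L^2$ (since $b$ is compactly supported in $\xi'$), applied to the $L^2$-bounded function $h\partial_yq$.

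For (ii), I combine the factorization of Lemma \ref{hyper-factorzi} with a commutator argument. Applying the first factorization to $w$ gives
\[
(hD_y-Q_h^+)(hD_y-Q_h^-)w=-B_hf+R'w=o_{L^2}(h).
\]
Commuting $B_h$ past $hD_y-Q_h^+$ inside $w^-=B_h(hD_y-Q_h^-)w$,
\[
(hD_y-Q_h^+)w^-=B_h(hD_y-Q_h^+)(hD_y-Q_h^-)w+[hD_y-Q_h^+,B_h](hD_y-Q_h^-)w.
\]
The first term is $B_h\cdot o_{L^2}(h)=o_{L^2}(h)$. In the second term, standard semi-classical calculus gives $[hD_y-Q_h^+,B_h]=hC_h$ with $C_h$ a tangential operator of order $0$ with symbol supported in a compact subset of $\mathcal H$. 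To avoid the fact that $hD_yw$ is not globally $L^2$-bounded, I pick a slightly larger tangential cutoff $\widetilde B_h$ equal to $1$ on the support of $b$, so that $C_h=C_h\widetilde B_h+O_{L^2\to L^2}(h^{\infty})$; then
\[
hC_h(hD_y-Q_h^-)w=hC_h\bigl(\widetilde B_h(hD_y-Q_h^-)w\bigr)+O(h^{\infty}).
\]
The parenthesized factor is the analogue of $w^-$ with $B_h$ replaced by $\widetilde B_h$, and applying the argument of step (i) verbatim shows it is $O_{L^2}(1)$. Thus the commutator term is $O_{L^2}(h)$, and (ii) follows for $w^-$. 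The case of $w^+$ is identical using the second factorization in Lemma \ref{hyper-factorzi}.

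The main obstacle is step (ii): the brutal identity $\|hD_yw\|_{L^2}=O(1)$ fails because $h\partial_y(h\partial_yq)=-h^2\Delta_{\pa}q$ need not be $L^2$-bounded without microlocalization. The trick that makes it work is that the commutator $[hD_y-Q_h^+,B_h]$ already carries a factor of $h$ and has symbol supported where the enlarged cutoff $\widetilde B_h$ is identically one, so one can insert $\widetilde B_h$ for free and then reuse the $O(1)$ bound from step (i). Everything else is standard semi-classical energy estimate bookkeeping.
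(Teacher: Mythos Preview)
Your argument is correct and takes a genuinely different route from the paper's proof.  The paper proceeds component-wise: it first handles only the normal component $w_{\pe}^{\pm}$ (using Proposition~\ref{bound} to get $\|w_{\pe}^{\pm}\|_{L^2_{y,x}}=O(1)$, then the hyperbolic estimate to get $\|w_{0,\pe}^{\pm}\|_{L^2_x}=O(1)$), deduces from the difference $w_{0,\pe}^+ - w_{0,\pe}^- = -B_h(Q_h^+-Q_h^-)h\mathcal{N}q_0$ that $B_hh\mathcal{N}q_0=O_{L^2_x}(1)$, and finally invokes the pressure parametrix (Proposition~\ref{parametrixpressure}) to bound the problematic term $B_hh\nabla_{\pa}hD_yq|_{y=0}$ appearing in $w_{0,\pa}^{\pm}$.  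Your approach is more uniform: you bound $B_hhD_yw_{\pa}$ directly in $L^2_{y,x}$ via the elementary observation that $B_hh\nabla_{x'}$ is a bounded tangential operator (compact symbol in $\xi'$) applied to $h\partial_yq=O_{L^2}(1)$, and then run the hyperbolic estimate on both components at once.  This bypasses the pressure parametrix entirely, which is a real simplification; on the other hand, the paper's route yields $\|B_hh\mathcal{N}q_0\|_{L^2}=O(1)$ as a free byproduct, which is used downstream (though it is reproved in the subsequent proposition anyway).

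One small point to tighten in step~(ii): writing $C_h=C_h\widetilde B_h+O_{L^2\to L^2}(h^{\infty})$ and then applying the remainder to $(hD_y-Q_h^-)w$ is not literally justified, since you have just argued that $hD_yw$ is \emph{not} globally in $L^2$.  The fix is immediate: the disjoint-support operator $C_h(1-\widetilde B_h)$ is in fact $O(h^{\infty})$ as a map $L^2_yH^{-N}_{x'}\to L^2_yH^N_{x'}$ for every $N$, and $(hD_y-Q_h^-)w$ lies in $L^2_yH^{-2}_{x'}$ with an $O(1)$ bound (the worst term $h^2\partial_y^2 q=-h^2\Delta_{x'}q+O_{L^2}(h)$ is controlled in $H^{-2}_{x'}$ by $\|q\|_{L^2}=O(h^{-1})$).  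With this adjustment your commutator estimate goes through.
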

\begin{proof}
	From Proposition \ref{boundchapter1}, we have $(hD_y-Q_h^{\mp})w_{\pe}^{\pm}=O_{L^2(\mathbb{R}_+^d)}(h)$.  Applying the previous lemma to $w_{\pe}^{\pm}$, we have$\|w_{0,\pe}^{\pm}\|_{L^2(\mathbb{R}_x^{d-1})}=O(1)$. Combining the boundary condition, we have
	$$ B_h(Q_h^+-Q_h^-)(h\partial_yq)|_{y=0}=-B_h(Q_h^+-Q_h^-)h\mathcal{N}q_0=w_{0,\pe}^+-w_{0,\pe}^-=O_{L^2(\mathbb{R}^{d-1})}(1).
	$$
	Remark that in priori, $\mathcal{N}$ is a classical first order pseudo-differential operator, and we only have
	$$\|B_hh\mathcal{N}q_0\|_{L^2(\mathbb{R}^{d-1})}\leq \|B_h\|_{H^{-1}\rightarrow L^2}h\|\mathcal{N}q_0\|_{H^{-1}(\mathbb{R}^{d-1})}=O(h^{-1}).	$$
	From the exact pricipal symbol of $Q_h^{\pm}$, we have $\|B_hh\mathcal{N}q_0\|_{L^2(\mathbb{R}^{d-1})}=O(1)$, and the constant in big $O$ depends on the micro-local cut-off $b(y,x',\xi')$. As a consequence, $\|w_{0,\pe}^{\pm}\|_{L^2(\mathbb{R}^{d-1})}=O(1)$.
	
	It remains to study $w_{\pa}^{\pm}$. Notice that their boundary values are
	$$ w_{0,\pa}^{\pm}=B_h(v-(hD_yh\nabla_{\pa}q)|_{y=0})-B_hQ_h^{\pm}h\nabla_{\pa}q_0,
	$$
	where $v=(h\partial_y u)|_{y=0}=O_{L^2(\mathbb{R}^{d-1})}(1)$. All terms are obviously bounded in $L^2(\mathbb{R}^{d-1})$ except the trace of  $B_hh\nabla_{\pa}hD_yq$. To bound it, we use the support property of $b$ and Proposition \ref{parametrixpressurechapter1}, hence $B_hh\nabla_{\pa}hD_yq|_{y=0}=-B_hh\nabla_{\pa}h\mathcal{N}q_0=O_{L^2(\mathbb{R}_x^{d-1})}(1)$.
\end{proof}
Again by hyperbolic estimates, we have the following result:
\begin{proposition}\label{boundL2chapter1}
	$\|w^{\pm}\|_{L^2(\mathbb{R}_+^{d})}=O(1).$
	In particular,
	$$\|B_hhD_yw\|_{L^2(\mathbb{R}_+^{d})}+\|B_hh\mathcal{N}q_0\|_{L^2(\mathbb{R}_x^{d-1})}+\|B_h h^2\Delta_{0}q_0\|_{L^2(\mathbb{R}_x^{d-1})}=O(1),
	$$
	where $\Delta_0=\Delta_{\partial\Omega}$, the Laplace-Beltrami operator on $\partial\Omega$. 
\end{proposition}
\begin{proof}
	It remains to prove $ \|B_hh^2\Delta_{0}q_0\|_{L^2(\mathbb{R}_x^{d-1})}=O(1).
	$
	Indeed, 
	\begin{equation*}
	\begin{split}
	B_hh\partial_y w_{\pe}=&B_hh\partial_y u_{\pe}-h^2B_h\partial_y^2q\\=
	&	h^2B_h\partial_y u_{\pe}+h^2B_h\Big(\frac{1}{\sqrt{\det(g)}}\sum_{1\leq j,k\leq d-1}\partial_j(g^{jk}\partial_kq_0)+\frac{\partial_y\sqrt{\det(g)}}{\sqrt{\det(g)}}\partial_y q\Big).
	\end{split}
	\end{equation*}
	Thus 
	\begin{equation*}
	\begin{split}
	B_hh\partial_yw_{\pe}|_{y=0}=B_hh^2\Delta_0 q_0+O_{L^2(\mathbb{R}^{d-1})}(1),
	\end{split}
	\end{equation*}
	thanks to $h\partial_y u_{\pe}=0$ and $B_hh\mathcal{N}q_0=O_{L^2(\mathbb{R}^{d-1})}(h^{-1})$.
	From $w_{0,\pe}^{\pm}=B_hhD_yw_{\pe}|_{y=0}+B_hQ_h^{\pm}h\mathcal{N}q_0=O_{L^2(\mathbb{R}^{d-1})}(1)$, we deduce that  
	$B_hhD_yw_{\pe}|_{y=0}=O_{L^2(\mathbb{R}^{d-1})}(1)$,  and these yield 
	$\|B_hh^2\Delta_{0}q_0\|_{L^2(\mathbb{R}^{d-1})}=O(1)$.
\end{proof}
\begin{corollary}\label{vanishingmeasureqchapter1}
	$\|B_hh\nabla q\|_{L^2(\mathbb{R}_+^d)}=o(1)$.
\end{corollary}
\begin{proof}
	We will go back to the global notation in this calculation. It suffices to show that $B_hh\nabla q=\varphi\mathrm{Op}_h(b)\varphi_1h\nabla q=o_{L^2(\Omega)}(1)$ since there are only change of bounded weight in the integral with respect to the measure $\sqrt{\det(g)}dydx$ and $dydx$ in local coordinate, and the former admit us to apply integration by part and the structure of the equation in a simple way. We calculate
	\begin{equation*}
	\begin{split}
	(B_hh\nabla q|B_hh\nabla q)_{L^2(\Omega)}=&([B_h,h\nabla ]q|B_hh\nabla q)_{L^2(\Omega)}+(h\nabla B_hq|B_hh\nabla q)_{L^2(\Omega)}\\
	=&o(1)-(A_hB_hq|h\mathrm{ div }B_hh\nabla q)_{L^2(\Omega)}\\
	+&(B_hhq_0|B_h(h\partial_{\nu}q)|_{\partial\Omega})_{L^2(\partial\Omega)}\\
	=&o(1)+(B_hhq_0|B_hh\mathcal{N}q_0)_{L^2(\partial\Omega)},
	\end{split}
	\end{equation*}
	where we have used the fact that $hq=o_{L^2(\Omega)}(1)$ and $\Delta q=0$ in the calculation. Now from Lemma \ref{press.normchapter1}, we know that $hq\rightharpoonup 0$ weakly in $H^1(\Omega)$ and $hq_0\rightarrow 0$ strongly in $L^2(\partial\Omega)$. The last term is $o(1)$ since $B_hh\mathcal{N}q_0=O_{L^2(\partial\Omega)}(1)$. 
\end{proof}

%%%%%%%%%%%%%%%%%%%%%%%%%%%%%%%%%%%%%%%%%%%%%%%%%%%%%%%%%%%%%%%%%%%%

%%%%%%%%%%%%%%%%%%%%%%%%%%%%%%%%%%%%%%%%%%%%%%%%%%%%%%%%%%%%%%%%%%

\subsection{propagation estimate}

In this subsection, we prove Proposition \ref{hyperbolicchapter1}.
We factorize $-h^2\Delta-1$ as
$(hD_y-Q_h^{\pm})(hD_y-Q_h^{\mp})+R^{\pm}$ near $z_0\in\mathcal{H}$ and choose $Q_h^{\pm}$ with principal
symbols $\pm Q(y,x,\xi)=\sqrt{1-\lambda^2}b_1(y,x,\xi)$, as in the previous subsection. Take $\psi\in C_c^{\infty}([0,\epsilon_0))$ with $\psi\equiv 1$ in a neighborhood of $y=0$. By an abuse of notation, we introduce
$$ w^{\pm}=B_h^{\pm}(hD_y-Q_h^{\mp})w,
$$
where $B_h^{\pm}$ have principal symbols $\psi(y)b^{\pm}(y,x,\xi)$. Here, $b^{\pm}$ are solutions of
\begin{equation}\label{ODEchapter1}
\frac{\partial b^{\pm}}{\partial y}\mp H_{Q(y,x,\xi)}b^{\pm}=0,\quad b^{\pm}|_{y=0}=b_0,
\end{equation}
where $b_0$ is another micro-localization near $z_0$ with $b_1|_{\textrm{supp}(b_0)}=1$,
and $H_{Q}b=\{Q,b\}$. Note that the compact support of $\psi(y)b^{\pm}$
can be chosen arbitrarily close to the semi-bicharacteristic curves $\gamma^{\pm}$ corresponding
to the principal symbol $p$.
Moreover, $b^{\pm}$ are invariant along $\gamma^{\pm}$. Under these notations, Proposition \ref{hyperbolicchapter1} can be rephrased as follows
\begin{proposition}\label{hyper propagationchapter1}
	Let $\mu$ be the defect measure of $u$. If $$b^+\mu\mathbf{1}_{0<y\leq\epsilon_0}=0\;(b^-\mu\mathbf{1}_{0<y\leq\epsilon_0}=0),$$
	then we have $$b^-\mu\mathbf{1}_{0<y\leq\epsilon_0}=0\;(b^+\mu\mathbf{1}_{0<y\leq\epsilon_0}=0).$$
	Moreover, we have in fact $b^+\mu=b^-\mu=0$ in this case.
\end{proposition}
%%%%%%修改到此处
The proof will be divided into several lemmas. First we calculate
$$(hD_y-Q_h^{\pm})w^{\pm}=[hD_y-Q_h^{\pm},B_h^{\pm}](hD_y-Q_h^{\mp})w+B_h^{\pm}(hD_y-Q_h^{\pm})(hD_y-Q_h^{\mp})w,
$$
%$$(hD_y-Q_h^{\pm})w^{\pm}=[hD_y-Q_h^{\pm},B_h^{\pm}](hD_y-Q_h^{\mp})w+B_h^{\pm}(hD_y-Q_h^{\pm})(hD_y-Q_h^{\mp})w，$$ 这一行是原有的，运行莫名其妙提示问题
and
$$[hD_y-Q_h^{\pm},B_h^{\pm}]=\frac{h}{i}\textrm{Op}_h(\partial_y b^{\pm}\mp H_{Q}b^{\pm})\psi(y)
+\frac{h}{i}\psi'(y)B_h^{\pm}+R''.
$$
The first operator vanishes thanks to the definition of $b^{\pm}$, and the remainder term $R''=O_{L^{2}(\mathbb{R}_+^d)}(h^2)$. Therefore we have
$$ \|R''(hD_y-Q_h^{\mp})w\|_{L^2(\mathbb{R}_+^d)}=O(h^2),
$$
and consequently
$$ (hD_y-Q_h^{\pm})w^{\pm}=\frac{h}{i}\psi'(y)w^{\pm}+g^{\pm},
$$
with $g^{\pm}=o_{L^2(\mathbb{R}_+^d)}(h)$. 

\begin{lemma}\label{out-incoming wavechapter1}
	Let $\mu^{\pm}$ be the semi-classical defect measure of $w^{\pm}$ and $b$ is defined as above. Suppose that
	$b^{\pm}\mu^{\pm}\mathbf{1}_{0<y\leq \epsilon_0}=0$, then we must have
	$b^{\pm}\mu^{\pm}\equiv 0$ and $\mu_0^{\pm}=0$, where $\mu_0^{\pm}$ is the defect measure
	of $w_0^{\pm}=w^{\pm}|_{y=0}$.
\end{lemma}
\begin{proof}
	For $y_0=\epsilon_0/2$, we first claim that $\|w^{\pm}(y_0)\|_{L_{x}^2}=o(1)$.
	Indeed, from the assumption and compactness, the measure $\mu^{\pm}$ vanishes in a small neighborhood of semi-bicharacteristic curve $\gamma^{\pm}$. Thus $\|w^{\pm}\|_{L^2([y_0,\epsilon_0]\times\mathbb{R}^{d-1})}=o(1)$, provided that we choose supp$(b_0)$ small enough in the definition of $w^{\pm}$. Finally, repeating the argument in the proof of Lemma \ref{hyper-estimateschapter1}, we have
	$$ -h\|w^{\pm}(y_0)\|_{L^2(\mathbb{R}^{d-1})}^2=i\int_{y_0}^{\epsilon_0}((Q_h^{\pm}-(Q_h^{\pm})^*)\chi w^{\pm}|\chi
	w^{\pm})_{L^2(\mathbb{R}^{d-1})}(y)dy+o(h).
	$$
	The claim then
	follows. 
	
	Integrating the identity
	\begin{equation}
	\begin{split}
	h\frac{d}{dy}(w^{\pm}|w^{\pm})_{L^2(\mathbb{R}^{d-1})}&=(i(Q_h^{\pm}-(Q_h^{\pm})^*)w^{\pm}|w^{\pm})_{L^2(\mathbb{R}^{d-1})}
	+2h(\psi'(y)w^{\pm}|w^{\pm})_{L^2(\mathbb{R}^{d-1})}\\
	&+2\textrm{Im}(w^{\pm}|g^{\pm})_{L^2(\mathbb{R}^{d-1})}
	\nonumber
	\end{split}
	\end{equation}
	from $y=z<y_0$ to $y=y_0$, we have
	$$\|w^{\pm}(z)\|_{L^2(\mathbb{R}^{d-1})}^2\leq C\int_{z}^{y_0}\|w^{\pm}(y)\|_{L^2(\mathbb{R}^{d-1})}^2dy+o(1).
	$$
	Using
	$ \int_0^{y_0}\|w^{\pm}(y)\|_{L^2(\mathbb{R}^{d-1})}^2dy=o(1),
	$
	we obtain that $\|w_0^{\pm}\|_{L^2(\mathbb{R}^{d-1})}=o(1)$. This completes the proof of Lemma \ref{out-incoming wavechapter1}.
\end{proof}
\begin{remark}\label{remark1chapter1}
	Away from the boundary, the defect measure of $u$ equals to the defect measure of
	$w$, and it propagates along the bicharacteristic curves $\gamma^{\pm}$. Since we can
	decompose $w$ into $w^+$ and $w^-$ near a hyperbolic point, we call $w^+$($w^-$)the incoming wave and
	the out-coming wave. Thus the above proposition asserts that if we have no singularity of $w^+$($w^-$) along
	incoming wave(out-coming wave) near the boundary but strictly away from the boundary, then there is no singularity of the boundary data of incoming wave(out-coming wave).
\end{remark}
Changing the role of $y=y_0$ and $y=0$ in the proof of Lemma \ref{out-incoming wavechapter1}, we conclude that if $\mu_0^{\pm}=0$, then $b^{\pm}\mu^{\pm}=0$.
To finish the proof of Proposition \ref{hyper propagationchapter1}, we need understand how the singularity transfers form boundary data of in-coming wave to the boundary data of out-coming wave.

\begin{lemma}\label{finalstepchapter1}
	$\mu_{0}^{\pm}=0$ implies that $\mu_0^{\mp}\mathbf{1}_{\xi\neq 0}=0.$ Consequently, $\mu^{\mp}\mathbf{1}_{\xi\neq 0}=0.$	
\end{lemma}
\begin{proof}
	By symmetry, we only need to deduce  $\mu_0^-\mathbf{1}_{\xi\neq 0}=0$ from $\mu_0^+=0$. For $\delta>0$, we define
	$$ b_{0,\delta}(x,\xi)=b_0(x,\xi)\Big(1-\widetilde{\psi}\big(\frac{\lambda(0,x,\xi)}{\delta}\big)\Big), 
	$$
	with some $\widetilde{\psi}\in C_c^{\infty}(\mathbb{R}), \widetilde{\psi}|_{[-2,2]}\equiv 1$. We define $b_{\delta}^{\pm}(y,x,\xi)$ by solving ODE \eqref{ODEchapter1} with initial data $b_{0,\delta}$. Let $B_{\delta,h}^{\pm}$ be the associated semi-classical PdO of $b_{\delta}^{\pm}$. From compactness and continuous dependence of the initial data, we have that  $\delta<\lambda(y,x,\xi)<c_0<1$ on supp$(b_{\delta}(y))$ for $0\leq y\leq \epsilon_0$, since on $Y_+\times\mathbb{R}^{d-1}$, $\frac{\lambda(y,x,\xi)}{|\xi|}\sim 1$. Note that the solutions of the transport equation \eqref{ODEchapter1} are given by  $$b^{\pm}(y,x,\xi)=b_0\circ\gamma^{\pm}(y)^{-1}(x,\xi),\quad b_{\delta}^{\pm}(y,x,\xi)=b_{0,\delta}\circ\gamma^{\pm}(y)^{-1}(x,\xi),$$
	we have that $\frac{b_{\delta}^{\pm}}{b^{\pm}}$ is a smooth function with compact support in $Y_+\times\mathbb{R}^{d-1}$. Denote by $w_{\delta}^{\pm}=B_{\delta,h}^{\pm}(hD_y-Q_h^{\mp})w$, and $\mu_{\delta}^{\pm}$ its semi-classical defect measure, we have $\mu_{\delta}^{\pm}=\mu^{\pm}\left(\frac{b_{\delta}^{\pm}}{b^{\pm}}\right)^2$. In particular, $\mu_{\delta,0}^{\pm}=\mu_0^{\pm}\left(1-\widetilde{\psi}\left(\frac{\lambda_0}{\delta}\right)\right)^2$ and  supp$(\mu_{\delta}^{\pm})\subset$ supp$(\mu^{\pm})$. On the boundary, $B_{\delta,h}^+$ and $B_{\delta,h}^-$ coincide and will be denoted by $B_{\delta,h}^0$. Taking the trace of $w_{\delta}^{\pm}$, we have
	\begin{equation*}
	\left\{
	\begin{aligned}
	&w_{\delta,0,\pa}^+=-iB_{\delta,h}^0v+ih^2B_{\delta,h}^0\partial_y(\nabla q)_{\pa}|_{y=0}+B_{\delta,h}^0Q_h^+h\nabla_{\pa}q_0,\\
	&w_{\delta,0,\pe}^+=iB_{\delta,h}^0h^2\partial_y^2q|_{y=0}+B_{\delta,h}^0Q_h^+h\partial_yq|_{y=0},
	\end{aligned}
	\right.
	\end{equation*}
	where $v=h\partial_yu|_{y=0}=O_{L_x^2}(1)$.
	Similarly, we have
	\begin{equation*}
	\left\{
	\begin{aligned}
	&w_{\delta,0,\pa}^-=-iB_{\delta,h}^0v+ih^2B_{\delta,h}^0\partial_y(\nabla q)_{\pa}|_{y=0}+B_{\delta,h}^0Q_h^-h\nabla_{\pa}q_0,\\
	&w_{\delta,0,\pe}^-=iB_{\delta,h}^0h^2\partial_y^2q|_{y=0}+B_{\delta,h}^0Q_h^-h\partial_yq|_{y=0}.
	\end{aligned}
	\right.
	\end{equation*}
	Notice that $\sigma(Q_h^+)=-\sigma(Q_h^-)$, we write
	$\alpha=-B_{\delta,h}^0h^2\Delta_{0}q_0,\beta=B_{\delta,h}^0Q_h^+h\mathcal{N}q_0,$ hence
	$$ w_{\delta,0,\pe}^{\pm}=i\alpha\mp \beta+O_{L_x^2}(h).
	$$
	From the assumption $\|w_{\delta,0,\pe}^+\|_{L^2}=o(1)$, we have that
	$\|i\alpha-\beta\|_{L^2}^2=o(1),
	$
	and this implies that
	$\|\alpha\|^2+\|\beta\|^2-2\textrm{Im}(\alpha|\beta)=o(1).
	$
	We claim that $\textrm{Im}(\alpha|\beta)=o(1).$
	
	Indeed, from Proposition \ref{boundL2chapter1} and the ellipticity of the Dirichlet-Neumann operator $\mathcal{N}$, we have that $q_0=O_{L^2(\mathbb{R}_x^{d-1})}(1)$, micro-locally away from $\xi=0$.  
	Now from the trace theorem and Proposition \ref{parametrixpressurechapter1}, we have
	$$\beta=A_{\delta,h}q_0+O_{L^2(\mathbb{R}_x^{d-1})}(h^{1/3})
	$$
	for some PdO with real-valued principal symbol $a_{\delta},$ compactly supported and vanishing when $\lambda(y,x,\xi)\leq\delta/4$. Similarly,
	$$ \alpha=A_{\delta,h}'q_0+o_{L^2(\mathbb{R}_x^{d-1})}(1)
	$$
	for some PdO with real-valued principal symbol $a_{\delta}'.$
	Thus 
	$\textrm{Im}(\alpha|\beta)_{L^2}=o(1),$  since all the principal symbols involved in the inner product are real-valued.
	Now from $\|\alpha\|_{L^2}=o(1),\|\beta\|_{L^2}=o(1),$ one deduce that the terms on the righthand side of $w_{\delta,0,\pa}^{\pm}$ involving pressure are also $o_{L_x^2}(1)$, and $v=o_{L_x^2}(1)$ follows since $w_{\delta,0,\pa}^-=o_{L^2(\mathbb{R}^{d-1})}(1)$. Therefore $\mu_{\delta,0}^{-}=0$ and consequently $\mu_{\delta}^-=0$ from Lemma \ref{out-incoming wavechapter1}. This implies that $\mu_{0}^-\mathbf{1}_{\lambda>\delta}=\mu^-\mathbf{1}_{\lambda>\delta}=0$.  Since $\delta>0$ is arbitrary, we have that $b^-\mu^-\mathbf{1}_{\xi\neq 0}=0$. Moreover, Corollary \ref{vanishingmeasureqchapter1} implies that $\mu\mathbf{1}_{\xi\neq 0}=0$. This completes the proof of Lemma \ref{finalstepchapter1}.
\end{proof}
Now we finish the proof of Proposition \ref{hyper propagationchapter1} by showing the following lemma. 	
\begin{lemma}
	$\mu^+=0$ implies that $\mu^-=0$.
\end{lemma}
\begin{proof}
	We only need deal with $\xi=0$. 
	Take $\widetilde{\psi}$ to be a cut-off function which equals $1$ in a near the origin. Pick any $\epsilon>0,$ we define the operator
	$$ B_{h}^{\epsilon,\pm}=\textrm{Op}_h(\widetilde{\psi}(\lambda(y,x,\xi)/\epsilon))B^{\pm}_h.$$
	Applying divergence equation for $w^{\pm}$
	$$ B_{h}^{\epsilon,\pm}h\textrm{div}_{\pa}w^{\pm}_{\pa}+B_{h}^{\epsilon,\pm}h\partial_yw^{\pm}_{\pe}=O_{L^2(\mathbb{R}_+^d)}(h),
	$$
	we have 
	$$ \|B_{h}^{\epsilon,\pm}h\partial_yw^{\pm}_{\pe}\|_{L^2(\mathbb{R}_+^d)}\leq\|B_{h}^{\epsilon,\pm}h\textrm{div}_{\pa}w^{\pm}_{\pa}\|_{L^2(\mathbb{R}_+^d)}
	+R_{\epsilon}(h)$$
	with $R_{\epsilon}(h)\rightarrow 0,$ as $h\rightarrow 0$ for each fixed $\epsilon>0$.
	By estimating the operator norm from its symbol, we have
	$$\|B_{h}^{\epsilon,\pm}h\partial_yw_{\pe}^{\pm}\|_{L^2(\mathbb{R}_+^d)}\leq C\epsilon+R_{\epsilon}(h),
	$$
	and
	$$ \limsup_{h\rightarrow 0^+}\|B^{\epsilon,\pm}_{h}h\partial_yw_{\pe}^{\pm}\|_{L^2(\mathbb{R}_+^d)}\leq C\epsilon.
	$$
	Using the equation
	$hD_yw_{\pe}^{\pm}=Q_h^{\pm}w_{\pe}^{\pm}+O_{L^2(\mathbb{R}_+^d)}(h)
	$,
	we have
	$$\limsup_{h\rightarrow 0^+}\|B_{h}^{\epsilon,\pm}Q_h^{\pm}w_{\pe}^{\pm}\|_{L^2(\mathbb{R}_+^d)}\leq C\epsilon.
	$$
	Finally let $\epsilon\rightarrow 0$, we have
	$\mu_{\pe}^{\pm}\mathbf{1}_{\xi=0}=0.
	$
	Therefore $\mu_{\pe}^-=0$. As a consequence of the proof of Lemma \ref{out-incoming wavechapter1},
	$\mu_{0,\pe}^{\pm}\mathbf{1}_{\xi=0}=0.
	$
	Now let $\mu_{\alpha},\mu_{\beta}$ be the defect measures of $\alpha=-B_h^0h^2\Delta_0q_0,\beta=B_h^0Q_hh\mathcal{N}q_0$, and let $\mu_{i\alpha\pm\beta}$ be the defect measure of $i\alpha\pm\beta$. Denote also by 
	$\mu_{\alpha\beta}$ the limit corresponding to the quadratic form $(A_h\alpha|\beta)$. Similarly for $\mu_{\beta\alpha}$. Note that $\mu_{\alpha\beta}=\overline{\mu_{\beta\alpha}}$.
	From
	\begin{equation*}
	\begin{split}
	&\langle\mu_{i\alpha+\beta},\mathbf{1}_{\xi=0}\rangle=\langle\mu_{\alpha},\mathbf{1}_{\xi=0}\rangle+\langle\mu_{\beta},\mathbf{1}_{\xi=0}\rangle-\langle 2\Im\mu_{\alpha\beta},\mathbf{1}_{\xi=0}\rangle=0\\
	&\langle\mu_{i\alpha+\beta},\mathbf{1}_{\xi=0}\rangle=\langle\mu_{\alpha},\mathbf{1}_{\xi=0}\rangle+\langle\mu_{\beta},\mathbf{1}_{\xi=0}\rangle+\langle 2\Im\mu_{\alpha\beta},\mathbf{1}_{\xi=0}\rangle=0,
	\end{split}
	\end{equation*}
	we have that $\mu_{\alpha}\mathbf{1}_{\xi=0}=\mu_{\beta}\mathbf{1}_{\xi=0}=0$.

	Next we consider parallel components. The key claim is that the measure corresponding to $B_h^0Q_h^{\pm}h\nabla_{\pa}q_0$ vanishes on the set $\{\xi=0\}$.
	Indeed, from Lemma \ref{press.normchapter1} and the trace theorem, $hq_0\rightarrow 0$ strongly in $L^2(\partial\Omega)$. From the ellipticity of $\mathcal{N}$, there exists a classical pseudo-differential operator $E$ of order $-1$ such that $E\mathcal{N}=I+R$, where $R$ is a classical smoothing operator. Our goal is to show that
	\begin{equation*}
	\lim_{\epsilon\rightarrow 0}\limsup_{h\rightarrow 0}\|B_{h}^0B_{h}^{\epsilon,0}Q_h^{\pm}h\nabla_{\pa}q_0\|_{L^2(\mathbb{R}^{d-1})}=0.
	\end{equation*}
	From symbolic calculus and the strong convergence of $hq_0$ in $L^2(\mathbb{R}_x^{d-1})$, it suffices to prove
	\begin{equation}\label{zerolimitchapter1}
	\lim_{\epsilon\rightarrow 0}\limsup_{h\rightarrow 0}\|h\nabla_{\pa}B_{h}^0B_{h}^{\epsilon,0}Q_h^{\pm}q_0\|_{L^2(\mathbb{R}^{d-1})}=0.
	\end{equation}

	We write
	\begin{equation}\label{oddcommutatorchapter1}
	\begin{split}
	h\nabla_{\pa}B_h^0B_{h}^{\epsilon,0}Q_h^{\pm}q_0=&
	h\nabla_{\pa}B_h^0B_{h}^{\epsilon,0}Q_h^{\pm}E\mathcal{N}q_0-h\nabla_{\pa}B_h^0B_{h}^{\epsilon,0}Q_h^{\pm}Rq_0\\
	=&\nabla_{\pa}EB_h^0B_{h}^{\epsilon,0}Q_h^{\pm}h\mathcal{N}q_0+h\nabla_{\pa}[B_h^0B_{h}^{\epsilon,0}Q_h^{\pm},E]\mathcal{N}q_0\\
	-&h\nabla_{\pa}B_h^0B_{h}^{\epsilon,0}Q_h^{\pm}Rq_0.
	\end{split}
	\end{equation}
	Here we are taking the commutator between a semi-classical PdO and a classical PdO, hence the semi-classical symbolic calculus is not applicable. Yet, it is not difficult to check that for any $a\in C_c^{\infty}(T^*\partial\Omega)$, $E\in S_{x,\xi}^{-1}$, 
	$$[a(x,hD_x),E(x,D_x)]=h\textrm{Op}(S^{-1})+\textrm{Op}(S^{-2}),
	$$
	where the implicit constants only depend on the semi-norms of the symbols $a(x,\xi)$ and $E(x,\xi)$. Notice that $h\nabla_{\pa}B_h^0,B_{h}^{\epsilon,0},Q_h^{\pm}$ are uniformly bounded operators in $L_x^2$ with respect to $h$, thus $\nabla_{\pa}B_h^0B_{h}^{\epsilon,0}Q_h^{\pm}R$, $\nabla_{\pa}B_h^0\textrm{Op}(S^{-2})\mathcal{N}$, $h\nabla_{\pa}B_h^0\textrm{Op}(S^{-1})\mathcal{N}$ are uniformly bounded operators in $L_x^2$ with respect to $h$. Thus from the strong convergence of $hq_0$, the last two terms on the right hand side of \eqref{oddcommutatorchapter1} are killed when we let $h\rightarrow 0$ first. Thus \eqref{zerolimitchapter1} follows from the vanishing of the measure of $\pm\beta=B_h^0Q_h^{\pm}h\mathcal{N}q_0$ on the set $\{\xi=0\}$.
	Combining the assumption that $\mu_{0,\pa}^+\mathbf{1}_{\xi=0}=0$, we deduce that $\mu_{0,\pa}^-\mathbf{1}_{\xi=0}=0$. The proof of Proposition \ref{hyper propagationchapter1} is now complete.
\end{proof}

%%%%%%%%%%%%%%%%%%%%%%%%%%%%%%%%%%%%%%%%%%%%%%%%%%%%%%%%%%%%%%%%%%%%%%%%%%%%
\section{Near $\mathcal{G}^{2,+}$}

In this section, we follow the strategy of V.~Ivrii (see \cite{bookIvrii} or \cite{bookHormander(III)}) to prove Proposition \ref{diffractivechapter1}. Denote by $G=\det(\ov{g})$ and $P_h=h^2\Delta_H-1$, we have
\begin{lemma}\label{HodgeLaplacechapter1}
	In local coordinate $Y_+$, we have
	$$P_h=-h^2\frac{\ov{g}}{\sqrt{G}}\partial_y\big(\sqrt{G}\ov{g}^{-1}\partial_y\big)+R_h=h^2D_y^2+\mathrm{Op}_h(r)+O_{L^2\rightarrow L^2}(h),
	$$	
	where $R_h$ is a matrix-valued second order differential operator in $x$ with scalar principal symbol $r(y,x,\xi)=1-\lambda(y,x,\xi)^2$, which is self-ajoint with respect to the $(\cdot|\cdot)_{L^2(Y_+)}$.  
\end{lemma}
The proof will be given in the appendix.

To simplify the notations, in a fix local coordinate in $Y_+$, we will identify $u=\varphi_1u,q=\varphi_1 q$ and all the operators $B$ by $\varphi B\varphi_1$.
\begin{proposition}\label{priorichapter1}
	For any tangential operator $B$ with scalar principal symbol $b(y,x,\xi)$ vanishing near $\xi=0$,  we have
	$$ \limsup_{h\rightarrow 0}\|BhD_yu\|_{L^2(Y_+)}\leq
	\sup_{\rho\in\textrm{supp }(b)}|r(\rho)|^{1/2}|b(\rho)|.
	$$
\end{proposition}
\begin{proof}
	We calculate
	\begin{equation*}
	\begin{split}
	(BhD_yu|BhD_yu)_{Y_+}=&([B,hD_y]u|BhD_yu)_{Y_+}+(hD_yBu|BhD_y u)_{Y_+}\\
	=&O(h)+(Bu|Bh^2D_y^2u)_{L^2(Y_+)}\\
	=&O(h)-(Bu|BRu)_{L^2(Y_+)}+(Bu|BP_hu)_{Y_+}\\
	=&O(h)-(Bu|BRu)_{Y_+}-(Bu|Bh\mathrm{d} q)_{Y_+}.
	\end{split}
	\end{equation*}
	Integrating by part and using symbolic calculus, we have
	\begin{equation*}
	\begin{split}
	(Bu|Bh\mathrm{d} q)_{Y_+}=&(Bu|h\mathrm{d} Bq)_{Y_+}+(Bu|[B,h\mathrm{d}]q)_{Y_+}\\
	=&-([h\mathrm{d}^*,B]u|Bq)_{Y_+}+(Bu|[B,h\mathrm{d}]q)_{Y_+}\\
	=&O(h),
	\end{split}
	\end{equation*}
	thanks to the fact that $B$ has scalar-valued principal symbol.
\end{proof}

The proof of Proposition \ref{diffractivechapter1} is based on the following integration by part result.

\begin{proposition}\label{integrating by partchapter1}
	Given real scalar-valued tangential symbols $a_0,a_1$, there exist tangential operators $A_0,A_1$(constructed in the local coordinate) with real, scalar-valued principal symbol $a_0,a_1$ and $A=A_1hD_y+A_0$, such that for any 1-form $w$ with compact support in $Y_+$, we have
	$$\frac{2}{h}\Im(P_hw|Aw)_{Y_+}=(A_1hD_yw|hD_yw)_{\underline{\partial}Y_+}+
	\Re\sum_{j=0}^2(C_j(hD_y)^jw|w)_{Y_+}+O(h),
	$$
	where the tangential operators $C_j$ have scalar-valued principal symbol $c_j(y,x,\xi)$ and
	$$ \sum_{j=0}^2c_j(y,x,\xi)\eta^j=\{p,a\}.
	$$
\end{proposition}
\begin{proof}
	We first calculate
	\begin{equation*}
	\begin{split}
	I=&\frac{1}{ih}\big(-\frac{h^2}{\sqrt{G}}\ov{g}h\partial_y(\sqrt{G}\ov{g}^{-1}h\partial_yw)\big| Aw \big)_{Y_+}-\frac{1}{ih}\big(Aw\big| -\frac{h^2}{\sqrt{G}}\ov{g}h\partial_y(\sqrt{G}\ov{g}^{-1}h\partial_yw) \big)_{Y_+}\\
	=&(hD_yw|A_1hD_yw)_{\underline{\partial}Y_+}+(A_1hD_yw|hD_yw)_{\underline{\partial}Y_+}\\
	+&\frac{1}{ih}(hD_yw|hD_yAw)_{Y_+}-\frac{1}{ih}(hD_yAw|hD_yw)_{Y_+}\\
	=&(hD_yw|A_1hD_yw)_{\underline{\partial}Y_+}+(A_1hD_yw|hD_yw)_{\underline{\partial}Y_+}
	\\
	+&\frac{1}{ih}(hD_yw|[hD_y,A]w)_{Y_+}-\frac{1}{ih}([hD_y,A]w|hD_yw)_{Y_+}\\
	+&\frac{1}{ih}(hD_yw|AhD_yw)_{Y_+}-\frac{1}{ih}(AhD_yw|hD_yw)_{Y_+},
	\end{split}
	\end{equation*}
	and the last two terms on the right hand side equal to 
	\begin{equation*}
	\begin{split}
	\frac{1}{ih}(A^*hD_yw|hD_yw)-(A_1^*hD_yw|hD_yw)_{\underline{\partial}Y_+}-\frac{1}{ih}(AhD_yw|hD_yw)_{Y_+}.
	\end{split}
	\end{equation*}
	We want to construct operators $A_0,A_1$ such that $A_1^*=A_1+O(h^{2})$ and $A^*=A+O(h^{2})$. Assume that
	$$ \widetilde{a_1}\backsimeq a_1^{(0)}+\frac{h}{i}a_1^{(1)}
	$$
	with real-valued $a_1^{(j)}$ (not necessarily scalar-valued). 
	From
	$$ \int_{Y_+}\langle A_1u|v\rangle_{\mathbb{R}^{d-1}}\sqrt{G}dydx=\int_{Y_+}\langle \ov{g}^{-1}A_1u,v\rangle_{\mathbb{R}^{d-1}}\sqrt{G}dydx,
	$$
	the symbol of $A_1^*$ is equal to the symbol of
	$\frac{\ov{g}}{\sqrt{G}}\mathrm{Op}_h(\widetilde{a_1}^*)\sqrt{G}\ov{g}^{-1}$, which can be expressed by
	$$ b_1(y,x,\xi)\backsimeq\sum_{k\geq 0}\left(\frac{h}{i}\right)^kb_1^{(k)}(y,x,\xi),
	$$
	with 
	$$ b_1^{(k)}(y,x,\xi)=\sum_{j=0}^1\sum_{|\alpha|+|\beta|+j=k}(-1)^j\partial_{\xi}^{\beta}\big(\frac{\ov{g}}{\sqrt{G}}\partial_{\xi}^{\alpha}\partial_x^{\alpha}a_1^{(j)}\big)\cdot\partial_x^{\beta}(\sqrt{G}\ov{g}^{-1}),\quad k\geq 1.
	$$
	We have that
	$$ b_1^{(0)}=a_1^{(0)},\quad b_1^{(1)}=-a_1^{(1)}+\sum_{|\alpha|+|\beta|=1}\partial_{\xi}^{\beta}\big(\frac{\ov{g}}{\sqrt{G}}\partial_{\xi}^{\alpha}\partial_x^{\alpha}a_1^{(0)}\big)\cdot\partial_x^{\beta}(\sqrt{G}\ov{g}^{-1})
	$$
	We set $$a_1^{(0)}=a_1,\quad  a_1^{(1)}=\frac{1}{2}\sum_{|\alpha|+|\beta|=1}\partial_{\xi}^{\beta}\big(\frac{\ov{g}}{\sqrt{G}}\partial_{\xi}^{\alpha}\partial_x^{\alpha}a_1^{(0)}\big)\cdot\partial_x^{\beta}(\sqrt{G}\ov{g}^{-1}),$$
	thus $A_1^*=A_1+O(h^2)$. Note that $a_1^{(1)}$ is matrix-valued while $a_1^{(0)}$ is real and scalar-valued.
	
	The construction of $A_0$ is similar. We observe that $(hD_y)^*=hD_y+h\frac{g}{\sqrt{G}}D_y(\sqrt{G}\ov{g}^{-1})$ and set 
	$$ \widetilde{a_0}=a_0^{(0)}+\frac{h}{i}a_0^{(1)}.
	$$  
	$A_0^*$ has symbol which can be expanded as
	$$ b_0\backsimeq \sum_{k\geq 0}\big(\frac{h}{i}\big)^kb_0^{(k)}
	$$
	with $b_0^{(0)}=a_0^{(0)}$ and
	\begin{equation}\label{b0kchapter1}
	b_0^{(k)}(y,x,\xi)= \sum_{j=0}^1\sum_{|\alpha|+|\beta|+j=k}(-1)^j\partial_{\xi}^{\beta}\big(\frac{\ov{g}}{\sqrt{G}}\partial_{\xi}^{\alpha}\partial_x^{\alpha}a_0^{(j)}\big)\cdot\partial_x^{\beta}(\sqrt{G}\ov{g}^{-1}),\quad k\geq 1.
	\end{equation}
	Note that
	\begin{equation*}
	\begin{split}
	(hD_y)^*A_1^*-A_1^*hD_y=&[(hD_y)^*,A_1^*]+A_1^*(hD_y)^*-A_1^*hD_y\\=&\frac{h}{i}(\partial_yA_1^*)+\frac{h}{i}\big[\frac{\ov{g}}{\sqrt{G}}\partial_y(\sqrt{G}\ov{g}^{-1}),A_1^*\big]+\frac{h}{i}A_1^*\frac{\ov{g}}{\sqrt{G}}\partial_y(\sqrt{G}\ov{g}^{-1}),
	\end{split}
	\end{equation*} 
	and its symbol can be expanded as 
	$$ \sum_{k\geq 0}\big(\frac{h}{i}\big)^k\kappa_k(y,x,\xi)
	$$
	with $\kappa_0=0$ and 
	\begin{equation*}
	\begin{split}
	\kappa_1=&\partial_yb_1+b_1\frac{\ov{g}}{\sqrt{G}}\partial_y(\sqrt{G}\ov{g}^{-1}),\\
	\kappa_k=&\sum_{|\alpha|=k-1}\frac{1}{i^{|\alpha|+1}}\{\partial_{\xi}^{\alpha},\partial_x^{\alpha}\}\big(\frac{\ov{g}}{\sqrt{G}}\partial_y(\sqrt{G}\ov{g}^{-1}),b_1\big)\\
	&+\frac{h}{i}\big(\partial_yb_1+b_1\frac{\ov{g}}{\sqrt{G}}\partial_y(\sqrt{G}\ov{g}^{-1})\big)+\frac{h}{i}\sum_{|\alpha|\geq 1}\frac{h^{|\alpha|}}{i^{|\alpha|}}\{\partial_{\xi}^{\alpha},\partial_x^{\alpha}\}\big(\frac{\ov{g}}{\sqrt{G}}\partial_y(\sqrt{G}\ov{g}^{-1}),b_1\big), \quad k\geq 2,
	\end{split}
	\end{equation*}
	where
	$$\{\partial_{\xi}^{\alpha},\partial_x^{\alpha}\}(f_1,f_2)=\partial_{\xi}^{\alpha}f_1\partial_{x}^{\alpha}f_2-\partial_{\xi}^{\alpha}f_2\partial_{x}^{\alpha}f_1.
	$$
	We set $b_0^{(0)}=a_0$ and $a_0^{(1)}$ such that $a_0^{(1)}=b_0^{(1)}+\kappa_1$(it has a solution thanks to \eqref{b0kchapter1}).
	Finally, we construct $A_j$ by $\varphi_1\mathrm{Op}_h(\widetilde{a_j})\varphi_1$ in local coordinate and it can be easily verified that $$A_1^*=A_1+O_{L^2\rightarrow L^2}(h^2), \quad A^*=A+O_{L^2\rightarrow L^2}(h^2). 
	$$  
	Therefore 
	$$I=(hD_yw|A_1hD_yw)_{\underline{\partial}Y_+}+\frac{1}{ih}(hD_yw|[hD_y,A]w)_{Y_+}-\frac{1}{ih}([hD_y,A]w|hD_yw)_{Y_+}+O(h).
	$$
	We next calculate 
	\begin{equation*}
	\begin{split}
	\frac{1}{ih}(R_hw|Aw)_{Y_+}-\frac{1}{ih}(Aw|R_hw)_{Y_+}
	=&\frac{1}{ih}((A^*R _h-R_h^*A)w|w)_{Y_+}\\
	=&\frac{1}{ih}([A,R_h]w|w)_{Y_+}+O(h),
	\end{split}
	\end{equation*}
	since $R_h$ is self-ajoint and $A^*-A=O_{L^2\rightarrow L^2}(h^2)$. Moreover, the principal symbol of $\frac{1}{ih}[A,R_h]$ is $\{r,a\}$. This completes the proof of Proposition \ref{integrating by partchapter1}.
\end{proof}

Now assume that we are working near a diffractive point $\rho\in\mathcal{G}^{2,+}$ in $Y_+$ where
$$ \partial_yr\geq c_0>0
$$

The following lemma is a semi-classical version of Lemma 24.4.5 in \cite{bookHormander(III)}. The proof is slightly more complicated, due to the different equation that we are considering.
\begin{lemma}\label{translationchapter1}
	Let $B_j=\varphi B_j\varphi_1,$ with real, scalar-valued tangential principal symbols $b_j, j=0,1,2$, compactly supported and
	$$ \sum_{j=0}^2b_j(y,x,\xi)\eta^j=-\psi(y,x,\eta,\xi)^2 \textrm{when }\eta^2=r(y,x,\xi),
	$$
	with some smooth function $\psi\in C_c^{\infty}(\mathbb{R}^d\times (\mathbb{R}^d\setminus \{0\}))$. Assume that
	$$ dr\neq 0, \partial_yr>0, \textrm{on }\{y=r=0\} \cap \bigcup_{j=1}^2\mathrm{supp }(b_j).
	$$
	Then one can chose compactly supported,  tangential operators $\Psi_j,j=0,1$ with real, scalar-valued principal symbols $\psi_j,j=0,1$, satisfying
	$$ \psi_0(y,x,\xi)=\psi(y,x,0,\xi), \psi_1(y,x,\xi)=\partial_{\eta}(y,x,0,\xi) \textrm{ when }\eta=r(y,x,\xi)=0,
	$$
	so that for any solution $u$ of $P_hu=f-h\nabla q,h\mathrm{ div }u=0$ with $u|_{y=0}=0$, we have
	\begin{equation}\label{positivitychapter1}
	\begin{split}
	&\Re\sum_{j=0}^2(B_j(hD_y)^jv|v)_{Y_+}+\|\Psi_0v+\Psi_1hD_yv\|_{L^2(Y_+)}^2
	+(\Theta P_hv|v)_{Y_+}\\
	= &o(1)
	\end{split}
	\end{equation}
	as $h\rightarrow 0$, where $v=\varphi\mathrm{Op}_h(\chi)\varphi_1u$ and $\chi\in C_c^{\infty}(Y_+\times\mathbb{R}^{d-1})$ has support near $\rho\in \mathcal{G}^{2,+}$. $\Theta$ is a tangential operator, depending on $\psi_j,b_j$ whose principal symbols are scalar-valued.
\end{lemma}

The proof is based on the following elementary lemma, for which the proof can be found as Lemma 24.4.3 in \cite{bookHormander(III)}, 
\begin{lemma}\label{test}
	Let $X$ be an open subset of $\ov{\mathbb{R}_+^{n}}=\{x\in\mathbb{R}^n:x_1\geq 0\}$, and let $r\in C^{\infty}(X)$. Assume that $r$ is real-valued, that $dr\neq 0$ when $r=0$ and that $\frac{\partial r}{\partial x_1}>0$ when $r=x_1=\frac{\partial r}{\partial x_j}=0$ for $j\neq 1$. Let
	$$ F(t,x)=\sum_{j=0}^2f_j(x)t^j
	$$
	be a quadratic polynomial in $t$ with coefficients in $C^{\infty}(X)$ such that
	$$  F(t,x)=-\psi(t,x)^2\textrm{ when }t^2=r(x),
	$$
	where $\psi\in C^{\infty}(\mathbb{R}\times X).$ Then one can find $\psi_0,\psi_1,\theta\in C^{\infty}(X)$ such that $\psi_0(x)=\psi(0,x),\psi_1(x)=\frac{\partial\psi}{\partial t}(0,x)$ when $r(x)=0$, and
	$$ F(t,x)+(\psi_0(x)+t\psi_1(x))^2\leq \theta(x)(t^2-r(x)),\quad \forall t\in\mathbb{R}, x\in X.
	$$
\end{lemma}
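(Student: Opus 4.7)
The plan is to choose $\psi_0,\psi_1$ via a symmetrization of $\psi$ so that $F+(\psi_0+t\psi_1)^2$ factors through $t^2-r$ wherever $r\ge 0$, and then to absorb the residual mismatch on $\{r<0\}$ into $g$ by exploiting the flatness that the construction automatically produces.

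For the construction of $\psi_0,\psi_1$, observe that $\psi(t,x)+\psi(-t,x)$ and $[\psi(t,x)-\psi(-t,x)]/t$ are smooth functions of $t$ that are even in $t$; by the classical theorem that a smooth even function of $t$ is a smooth function of $t^2$, there exist smooth $P(\rho,x),Q(\rho,x)$ on $\mathbb R\times X$ with
\[
P(t^2,x)=\tfrac12\bigl(\psi(t,x)+\psi(-t,x)\bigr),\qquad tQ(t^2,x)=\tfrac12\bigl(\psi(t,x)-\psi(-t,x)\bigr).
\]
I set $\psi_0(x):=P(r(x),x)$ and $\psi_1(x):=Q(r(x),x)$. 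These are smooth and, specialising at $t=0$ and $r(x)=0$, give $\psi_0(x)=\psi(0,x)$, $\psi_1(x)=\partial_t\psi(0,x)$ on $\{r=0\}$. Moreover, for $r(x)>0$, plugging $t=\pm\sqrt{r(x)}$ into the two identities yields $\psi_0\pm\sqrt r\,\psi_1=\psi(\pm\sqrt r,x)$.

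Consequently the quadratic-in-$t$ polynomial $H(t,x):=F(t,x)+(\psi_0+t\psi_1)^2$ vanishes at both roots $t=\pm\sqrt{r(x)}$ whenever $r(x)>0$, so it divides by $t^2-r(x)$ with quotient $g_0(x):=f_2(x)+\psi_1(x)^2$. Writing the residual as $H=g_0(t^2-r)+D_1(x)\,t+D_0(x)$ with $D_0,D_1\in C^\infty(X)$, we find $D_0\equiv D_1\equiv 0$ on the open set $\{r>0\}$, hence by smoothness $D_0,D_1$ are flat along $\{r=0\}$: $D_0(x),D_1(x)=O(|r(x)|^N)$ for every $N$. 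Setting $g=g_0+k$, the target inequality reduces to $k(t^2-r)\ge D_1 t+D_0$ for all $(t,x)$. On $\{r\ge 0\}$ the right side is zero and $k\equiv 0$ is admissible; on $\{r<0\}$ one has $t^2-r>0$ for every $t$, and the discriminant condition for this quadratic in $t$ becomes $D_1^2+4kD_0+4k^2 r\le 0$, which is solved by any $k\ge k_-(x)$, where
\[
k_-(x)=\frac{D_0(x)+\sqrt{D_0(x)^2+|r(x)|\,D_1(x)^2}}{2|r(x)|}=O(|r(x)|^N)\quad\text{for every }N.
\]

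The remaining step — producing a smooth nonnegative $k\in C^\infty(X)$ that vanishes on $\{r\ge 0\}$ and dominates $k_-$ on $\{r<0\}$ — is the main obstacle, since $k_-$ itself is not smooth near $\{r=0\}$ where the radical can degenerate. Iterating Hadamard's lemma, which applies thanks to $dr\neq 0$ on $\{r=0\}$, allows one to write $D_i=r^{2N}\tilde D_i^{(N)}$ for arbitrary $N$; on a neighborhood of $\{r=0\}$ in $\{r<0\}$ this bounds $k_-$ by a smooth expression that vanishes to infinite order at $r=0$ and extends by zero through the hypersurface. Away from $\{r=0\}$ the denominator $|r|$ is bounded below and a direct smooth choice suffices. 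A partition of unity patches these local constructions into a global smooth $k$, and we take $g:=g_0+k$. The flatness extracted from the symmetrization in the first two paragraphs is exactly what makes this smoothing step possible.
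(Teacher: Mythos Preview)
The paper does not prove this lemma; it is simply quoted as Lemma 24.4.3 of H\"ormander and used as a black box. So your attempt is an independent argument, and it should be assessed on its own.

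Your symmetrization yielding $\psi_0=P(r,\cdot)$, $\psi_1=Q(r,\cdot)$ is a clean choice, and the identity $H=g_0(t^2-r)+D_1t+D_0$ with $D_0=D_1=0$ on $\{r>0\}$ is correct. Two points need attention.

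First, the passage ``hence by smoothness $D_0,D_1$ are flat along $\{r=0\}$'' tacitly assumes every point of $\{r=0\}$ lies in the closure of $\{r>0\}$. In the interior of $X$ this follows from $dr\neq0$, but at a boundary point $x_1=0$ where the tangential gradient of $r$ vanishes it is exactly the hypothesis $\partial r/\partial x_1>0$ that guarantees nearby points with $r>0$ inside $X$. You never invoke this hypothesis, yet the argument collapses without it.

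Second, and more seriously, the final paragraph does not actually produce the smooth $k$. You correctly observe that $k$ must vanish on $\{r\ge0\}$ (on $\{r>0\}$ the inequality with $D_0=D_1=0$ forces $k(t^2-r)\ge0$ for all $t$, hence $k=0$), so $k$ is necessarily flat at $\{r=0\}$. But your claimed ``smooth expression'' bounding $k_-$ is never exhibited. Writing $D_i=r^{2N}\tilde D_i^{(N)}$ gives $k_-\le\tfrac{1}{2}|r|^{2N-1}\bigl(\tilde D_0^{(N)}+\sqrt{(\tilde D_0^{(N)})^2+|r|(\tilde D_1^{(N)})^2}\,\bigr)$; the odd power $|r|^{2N-1}$ and the square root are both non-smooth across $\{r=0\}$, and replacing them by crude bounds such as $r^{2N-2}(1+(\tilde D_0^{(N)})^2+(\tilde D_1^{(N)})^2)$ yields something smooth but \emph{not} vanishing on $\{r>0\}$. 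What is actually needed is the (true but nontrivial) fact that a nonnegative function on $\{r\le0\}$ decaying faster than every power of $|r|$ admits a smooth majorant supported in $\{r\le0\}$; this requires a Borel-type construction and is not a one-liner. H\"ormander's own proof sidesteps this difficulty by organizing the choice of $\psi_0,\psi_1,g$ differently rather than forcing exact divisibility of $H$ by $t^2-r$ on $\{r>0\}$.
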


\begin{proof}[Proof of Lemma \ref{translationchapter1}]
	Choose $C^{\infty}$ functions $\psi_0(y,x,\xi)$ and $\psi_1(y,x,\xi)$ as in Lemma \ref{test}, such that  $\psi_j(y,x,\xi)=\partial_{\eta}^{j}\psi|_{y=0},j=0,1$ when $\eta=r(y,x,\xi)=0$ and $$ \sum_{j=0}^2b_j\eta^j+(\psi_0+\eta\psi_1)^2\leq \theta(y,x,\xi)(\eta^2-r).
	$$
	Since $\psi_0,\psi_1$ and each $b_j$ are compactly supported in variables $(y,x,\xi)$, we may assume that	$\theta$ is smooth and with compact support. Define $\Theta=\varphi\textrm{Op}_h(\theta)\varphi_1$, $\Psi_j=\varphi\textrm{Op}_h(\psi_j)\varphi_1, j=0,1$ and consider the quantity
	\begin{equation*}
	\begin{split}
	\Re\sum_{j=0}^2(B_j(hD_y)^jv|v)_{Y_+}+((\Psi_0+\Psi_1hD_y)^2v|v)_{Y_+}
	-(\Theta hD_yv|hD_yv)_{Y_+}+(\Theta R_hv|v)_{Y_+}.
	\end{split}
	\end{equation*}
	The expression above can be written under the form below
	$$ \sum_{j=0}^2\left(C_j(hD_y)^jv|v\right)_{Y_+},
	$$
	where the tangential operators $C_j$ have real, scalar-valued principle symbol. Moreover,
	$$ 	 \sum_{j=0}^2c_j(y,x,\xi)\eta^j\leq 0.
	$$
	However, since the symbol is not bounded in $\eta$ and we can not apply sharp G\aa rding inequality directly. To resolve this issue, we extend each $c_j$ to $\widetilde{c_j}\in C_c^{m}(\mathbb{R}\times\mathbb{R}^{2d-2})$ who agrees with $c_j$ on $y\geq 0$ up to order $m$, any given order, of derivatives. This is possible since any order of $y$ derivative of all the symbols has continuous limit as $y\rightarrow 0$. We still use the notation $c_j$ in what follows. Let $\underline{v}=v\mathbf{1}_{y\geq 0}$ and we use the boundary condition $v|_{y=0}=0$ and calculate 
	\begin{equation*}
	\begin{split}
	\big(\sum_{j=0}^2C_j(hD_y)^jv\big|v\big)_{Y_+}
	=&\big(\sum_{j=0}^2C_j(hD_y)^j\underline{v}\big|\underline{v}\big)_{Y_+}
	\\=
	&\big(\psi\big(\frac{hD_y}{A}\big)\sum_{j=0}^2C_j(hD_y)^j\underline{v}\big|\underline{v}\big)_{Y_+}\\
	+&\big(\big(1-\psi\big(\frac{hD_y}{A}\big)\big)\sum_{j=0}^2C_j(hD_y)^j\underline{v}\big|\underline{v}\big)_{Y_+}\\
	=:&\mathrm{I}+\mathrm{II},
	\end{split}
	\end{equation*}
	for any big number $A>0$. Now we apply sharp G\aa ding inequality to the first term to get
	$$ I\leq C_Ah,
	$$
	with some constant $C_A$ depending on $A$.
	For the second term, the principle symbol is supported in the elliptic region and we define 
	$$ \Xi(y,x,\eta,\xi):=\big(1-\psi\big(\frac{\eta}{A}\big)\big)\displaystyle{\sum_{j=0}^2}\frac{c_j(y,x,\xi)\eta^j}{\eta^2-r(y,x,\xi)}\in S^0(\mathbb{R}^{2d}),
	$$
	hence we can bound
	\begin{equation*}
	\begin{split}
	|\mathrm{II}|\leq & O(h)+
	C\big(\Xi(y,x,hD_y,hD_x)\chi(y,x,hD_x)P_h\underline{u}\big|\big(1-\psi\big(\frac{2hD_y}{A}\big)\underline{v}\big)\big)_{Y_+}\\
	=&O(h)+C\big(\Xi(y,x,hD_y,hD_x)\chi(y,x,hD_x)(hw\otimes\delta_{y=0})\big|\big(1-\psi\big(\frac{2hD_y}{A}\big)\underline{v}\big)\big)_{Y_+}\\
	+&C\big(\Xi(y,x,hD_y,hD_x)\chi(y,x,hD_x)(\mathbf{1}_{y\geq 0}h\nabla q)\big|\big(1-\psi\big(\frac{2hD_y}{A}\big)\underline{v}\big)\big)_{Y_+},
	\end{split}
	\end{equation*}
	with $w=hD_yu|_{y=0}$. 
	Note that to obtain the expression above, one can not use symbolic calculus to deal with commutator between semi-classical tangential symbol and the classical symbol. However, since $P_h$ is a differential operator, we can compute its commutator with $\chi(y,x,hD_x)$ explicitly.
	
	Now from Proposition \ref{etaellipticchapter1}, the limsup of the third term on the right hand side when $h\rightarrow 0$ can be bounded by $\epsilon(A)$ with
	$ \displaystyle{\lim_{A\rightarrow\infty}\epsilon(A)=0.}
	$ Here we can use the flat metric to estimate the $L^2$ norm. 
	The second term on the right hand side can be bounded by
	\begin{equation*}
	\begin{split}
	&Ch\|(1-h^2\Delta_{y,x})^{-\frac{s}{2}}(w\otimes\delta_{y=0})\|_{L^2(\mathbb{R}^d)}\|(1-h^2\Delta_{y,x})^{\frac{s}{2}}\underline{v}\|_{L^2(\mathbb{R}^d)}
	\leq Ch^{1-s},
	\end{split}
	\end{equation*} 
	for any $s\in\left(
	\frac{1}{2},1\right)$. Here we have used the fact that $\delta_{y=0}\in H^{-s}(\mathbb{R}_y)$ for any $s>\frac{1}{2}$ and $h^s\underline{v}$ is bounded in $H^{s}(\mathbb{R}^d)$ since $\underline{v}|_{y=0}=0$ and $h\nabla_{y,x'}v$ is bounded in $L^2(\mathbb{R}^d)$.
	Therefore, for any $A>0$, we have showed that
	$$ \limsup_{h\rightarrow 0}|\mathrm{II}|\leq \epsilon(A),
	$$
	and	this completes the proof of Lemma \ref{translationchapter1}.
\end{proof}

Adapting to the notations in this section, Proposition \ref{diffractivechapter1} can be rephrased as follows
\begin{proposition}\label{propagation of supportchapter1}
	Suppose that $\rho\in\mathcal{G}^{2,+}$, and $\rho_0\in T^*\Omega$ approaching to $\rho$ such that $\partial_yr(\rho_0)\geq \frac{1}{2}\partial_yr(\rho)\geq c_0$. Let $\gamma_-=[\rho_0,\rho]$ be a segment of the generalized ray issued from $\rho_0$ to $\rho$ (the trajectory under the canonical projection is tangent to the boundary at $\rho$). Then if $\rho_0\notin\mathrm{supp }(\mu)$, we have $\rho\notin\mathrm{supp }(\mu)$.
\end{proposition}
\begin{proof}
	Take a small neighborhood $\Gamma_0$ of $\rho_0$ such that $\Gamma_0\cap\mathrm{supp}(\mu)=\varnothing$. Take a small neighborhood $W_0\subset \overline{\Omega}\times \mathbb{R}^{d-1}$ such that$ \frac{\partial r}{\partial y}(y,x,\xi)\geq c_0/4>0.$ Shrinking $W_0$ if necessary, we assume that each point $(y,x,\xi)\in W_0$ with $r(y,x,\xi)\geq 0$ can be connected by a (possibly broken) ray issued from $\Gamma_0$ with at most one reflection or tangency at $\partial\Omega$. It suffices to prove the following statement:
	
	For any $\chi\in C_c^{\infty}(\overline{\Omega}\times\mathbb{R}^{d-1})$ with  $\mathrm{supp}(\chi)\subset W_0$, small enough, we have
	$$  \varphi\mathrm{Op}_h(\chi)\varphi_1u=o_{L^2}(1),h\rightarrow 0.
	$$
	
	As in \cite{bookHormander(III)}, we construct test functions which satisfy the following properties:
	\begin{lemma}\label{test1chapter1}
		There exists
		$$ a(y,x,\eta,\xi)=a_0(y,x,\xi)+a_1(y,x,\xi)\eta,\quad a_j\in C_c^{\infty}(W_0)
		$$
		with the following properties:
		\begin{enumerate}
			\item $a_1(0,x,\xi)=-t(x,\xi)^2$, for some $t\in C_c^{\infty}(T^*\underline{\partial}Y_+)$,
			\item For some large $M\geq 0$, when $p=\eta^2-r(y,x,\xi)=0$, we have
			$$ \{p,a\}+aM=-\psi(y,x,\eta,\xi)^2+\omega(y,x,\xi)(\eta-r^{1/2}(y,x,\xi)),\;a=s^2,
			$$
			where $s\in C^{\infty}(Y_+\times(\mathbb{R}^d\setminus\{0\}))$,$\psi\in C_c^{\infty}(Y_+\times\mathbb{R}^d\setminus\{0\})$ and $\omega\in C_c^{\infty}(W_0)$. Moreover, $r|_{\mathrm{supp}(\omega)}>0$.
		\end{enumerate}
	\end{lemma}
	The construction is exactly the same as in \cite{bookHormander(III)} and will be given in the appendix D for the sake of completeness.
	
	Now we take $\chi\in C_c^{\infty}(W_0)$ with $\chi\equiv1,$ in a neighborhood of  $\mathrm{supp}(a_1)\cup\mathrm{supp}(a_2)$. Let $v=\varphi\mathrm{Op}_h(\chi)\varphi_1u,$ and we calculate
	\begin{equation}
	\begin{split}
	(P_hv|Av)_{Y_+}=&(\varphi\textrm{Op}_h(\chi)\varphi_1P_hu|Av)_{Y_+}+
	([P_h,\varphi\textrm{Op}_h(\chi)\varphi_1]u|Av)_{Y_+}\\
	=&(\varphi\textrm{Op}_h(\chi)\varphi_1f|Av)_{Y_+}-(\varphi\textrm{Op}_h(\chi)\varphi_1h\mathrm{d} q|Av)_{Y_+}\\+&([P_h,\varphi\textrm{Op}_h(\chi)\varphi_1]u|Av)_{Y_+}.
	\nonumber
	\end{split}
	\end{equation}
	Here we have used the differential form to calculate the inner product.
	Notice that $\{p,\chi\}=0$ on supp$(a_j)$ and $f=o_{L^2}(h)$, $hD_yu_{\pe}|_{y=0}=0,$ thus
	\begin{equation}\label{3chapter1}
	\begin{split}
	\frac{2}{h}\Im(P_hv|Av)_{\Omega}&=o(1)-\frac{2}{h}\Im([\varphi\textrm{Op}_h(\chi)\varphi_1,h\mathrm{d}]q|A\varphi\textrm{Op}_h(\chi)\varphi_1u)_{Y_+}\\
	+&\frac{2}{h}\Im(\textrm{Op}_h(\chi)q|h\mathrm{d}^*(A\varphi\textrm{Op}_h(\chi)\varphi_1u))_{Y_+}.
	\end{split}
	\end{equation}
	From Proposition \ref{integrating by partchapter1},
	\begin{equation}\label{4chapter1}
	\begin{split}
	\sum_{j=0}^2(C_j(hD_y)^jv|v)_{Y_+}&=-(A_1hD_yv|hD_yv)_{\underline{\partial}Y_+}-\frac{2}{h}\Im([\varphi\mathrm{Op}_h(\chi)\varphi_1,h\mathrm{d}]q|Av)_{Y_+}\\
	&+\frac{2}{h}\Im(\varphi\mathrm{Op}_h(\chi)\varphi_1q|h\mathrm{d}^*(A\varphi\mathrm{Op}_h(\chi)\varphi_1u))_{Y_+}+o(1).
	\end{split}
	\end{equation}
	Since the principal symbol of $A$ is scalar-valued, by using $\mathrm{d}^*u=0$, we have 
	$$ \frac{2}{h}\Im(\varphi\mathrm{Op}_h(\chi)\varphi_1q|hd^*(A\varphi\mathrm{Op}_h(\chi)\varphi_1u))_{Y_+}
	=(\varphi\mathrm{Op}_h(\chi)\varphi q|\Upsilon u)_{Y_+}+O(h)$$
    and
	$$-\frac{2}{h}\Im([\varphi\mathrm{Op}_h(\chi)\varphi_1,hd]q|Av)_{Y_+}=(\Upsilon_2q|Av)_{Y_+}+O(h),
	$$
		where $\Upsilon=\Upsilon_0+\Upsilon_1 hD_y$,
	and $\Upsilon_j$ are matrix-valued tangential pseudo-differential operators with principal symbols supported in $\mathrm{supp}(\chi)$.
	Applying Lemma \ref{translationchapter1} to the function
	$$ \sum_{j=0}^2c_j\eta^j+aM-\omega(\eta-r^{1/2})=-\psi^2,
	$$
	we have
	\begin{equation}\label{5chapter1}
	\begin{split}
	&\Re\big(\sum_{j=0}^2C_j(hD_y)^ju|u\big)_{Y_+}-\Re(\Phi(hD_y-\widetilde{Q}_+)v|
	\varphi(y,x,hD_x)v)_{Y_+}\\
	&+\Re(Mv|Av)_{Y_+}+(\Theta P_hv|v)_{Y_+}+\|\Psi_0v+\Psi_1hD_yv\|_{L^2(Y_+)}^2\\
	\leq& o(1)+ Ch\|v\|_{L^2(Y_+)}^2,
	\end{split}
	\end{equation}
	where the compact supported tangential operator $\Phi$ has scalar-valued principal symbol $\phi\in C_c^{\infty}(W_0)$ and $r|_{\textrm{supp} (\phi)}>0$, $\phi=1$ in a neighborhood of supp $\omega$.  $\widetilde{Q}_{+}$ is the operator constructed in the hyperbolic region with principal symbol $r^{1/2}$. This is possible since in the proof of Lemma \ref{test1chapter1}, we indeed have $r\geq \delta^2|\xi|^2$ on the support of $\omega$.
	Note that the principal symbol of $A$ is positive on $\eta^2-r=0$, we can apply Lemma \ref{translationchapter1} again to the term $(Mv|Av)_{\Omega}$ and bound it from below by
	$$ o(1)-|(\Theta_1P_hv|v)_{Y_+}|.
	$$
	Thus
	we have
	\begin{equation}\label{6chapter1}
	\begin{split}
	&-(A_1hD_yv|hD_yv)_{\underline{\partial}Y_+}+\|\Psi_0v+\Psi_1hD_yv\|_{L^2(Y_+)}^2\\
	&\leq o(1)+Ch\|v\|_{L^2(Y_+)}^2+C|(\Theta P_v|v)_{Y_+}|+C|(\Theta_1P_hv|v)_{Y_+}|\\&+
	\left|\Re(\varphi\mathrm{Op}_h(\phi)\varphi_1(hD_y-\widetilde{Q}_+)v|
	\varphi\mathrm{Op}_h(\omega)\varphi_1v)_{Y_+}\right|
	\\&+|(\Upsilon_2q|Av)_{Y_+}|+|(\varphi\mathrm{Op}_h(\chi)\varphi_1q|\Upsilon u)_{Y_+}|.
	\end{split}
	\end{equation}
	The terms on the left hind side are essentially positive from the sharp G\aa rding (semi-classical, see \cite{bookZworski}) inequality, hence we only need to control the terms on the right hind side. The term $|(\Theta P_hv|v)_{\Omega}|+|(\Theta_1 P_hv|v)_{\Omega}|=o(1)$ follows from the equation and symbolic calculus since the principal symbols of $\Theta$ and $\Theta_1$ are scalar-valued. Next we claim that
	\begin{equation}\label{claimchapter1}
	\left|\Re(\varphi\mathrm{Op}_h(\phi)\varphi_1(hD_y-\widetilde{Q}_+)v|
	\varphi\mathrm{Op}_h(\omega)\varphi_1v)_{Y_+}\right|=o(1),\quad h\rightarrow 0.
	\end{equation}
	Indeed, micro-locally on supp$(\phi)$, $r\gtrsim \delta^2>0$, hence in the region where $\lambda^2(y,x,\xi)<1$, we could construct $\widetilde{Q}_+,\widetilde{Q}_-$ micro-locally such that $$
	P_h=(hD_y-\widetilde{Q}_-)(hD_y-\widetilde{Q}_+)+O(h^{\infty})$$
	as we have done in the hyperbolic case. 
	From symbolic calculus and Corollary \ref{vanishingmeasureqchapter1}, we have 
	$$ (hD_y-\widetilde{Q}_-)(hD_y-\widetilde{Q}_+)u=O_{L_{y,x}^2}(h)+h\nabla q=o_{L_{y,x}^2}(1),\textrm{ micro-locally on supp}(\phi).
	$$
	Therefore the measure $\mu$ concentrates on $\{\eta=-\sqrt{r}\}\cup\{\eta=\sqrt{r}\}$. For any point $\rho_1\in$ supp$(\phi)\cap$supp$(\mu)$, with $\eta(\rho_1)=-\sqrt{r(\rho_1)}<0$, the backward generalized ray issued from $\rho_1$ must enter $\Gamma_0$ without meeting any point in $\mathcal{G}^{2,+}$, since along the backward flow, $\eta$ is decreasing. Consequently, away from the boundary,  $u=o_{L^2(Y_+)}(1)$ and hence $(hD_y-\widetilde{Q}_+)u=o_{L^2(Y_+)}(1)$, micro-localized near $\eta=-\sqrt{r}$, due to the fact that $hD_y-\widetilde{Q}_-$ is micro-locally elliptic near $\eta=\sqrt{r}$. Near the boundary and some point $\rho_1\in\mathcal{H}\cap \mathrm{supp}(\phi)$, any point can be connected backwardly to $\Gamma_0$ by at most transversal reflection. Thus \eqref{claimchapter1} holds true.
	
	It remains to control the last two terms involving pressure. We just treat one of them, and the other can be treated in the same way. %%%%%%%%%%%%%%%%%%%%%%%%%%%%%%%%订正到此处
	Pick $\varphi_0\in C_c^{\infty}((-2,2))$ which is equal to $1$ on $(-1,1)$. Define
	$$ \chi_{\epsilon}(y,x,\xi)=\chi(y,x,\xi)\varphi_0\left(r(y,x,\xi)\epsilon^{-1}\right).
	$$
	We fix any $\epsilon>0$, small enough, and write
	\begin{equation}
	\begin{split}
	(\Upsilon_2q|Av)_{Y_+}=&(\Upsilon_2q|A\varphi\textrm{Op}_h(\chi_{\epsilon})\varphi_1u)_{Y_+}
	\\+&(\Upsilon_2q|A\varphi\textrm{Op}_h(\chi-\chi_{\epsilon})\varphi_1u)_{Y_+}\\
	=:&\mathrm{I}_{h,\epsilon}+\mathrm{II}_{h,\epsilon}.
	\end{split}
	\end{equation}
		We first deal with $\mathrm{I}_{h,\epsilon}$.
	Notice that from Proposition \ref{priorichapter1}, we have
	$$ \limsup_{h\rightarrow 0}\|hD_y\varphi\mathrm{Op}_h(\chi_{\epsilon})\varphi_1u\|_{L^2(Y_+)}\leq C\epsilon^{1/2}.$$
	Applying Cauchy Schwartz, we have
	\begin{equation}\label{non-concentration}
	\begin{split}
	\int_0^{y_0}\|\varphi\mathrm{Op}_h(\chi_{\epsilon})\varphi_1u\|_{L^2(\underline{\partial}Y_+,\sqrt{G}dx)}^2dy&\leq
	Ch^{-2}\int_0^{y_0}\int \Big|\int_0^{y}hD_y\varphi\textrm{Op}_h(\chi_{\epsilon})\varphi_1u(s,x)ds\Big|^2dxdy\\
	&\leq \frac{Cy_0^2}{h^2}\|hD_y\varphi\textrm{Op}_h(\chi_{\epsilon}\varphi_1u)\|_{L_{x,y}^2}^2.
	\nonumber
	\end{split}
	\end{equation}
	
	By choosing $\theta\in (0,1/2)$ and $y_0=h\epsilon^{-\theta}$, we estimate
	\begin{equation}
	\begin{split}
	|\mathrm{I}_{h,\epsilon}|&\leq \Big(\int_0^{y_0}+\int_{y_0}^{\epsilon_0}\Big)|(\Upsilon_2q|A\varphi\mathrm{Op}_h(\chi_{\epsilon})\varphi_1u)_{L^2(\underline{\partial}Y_+,\sqrt{G}dx)}|dy\\
	&\leq C\frac{1}{\epsilon^{2\theta}}(\|hD_y\textrm{Op}_h(\chi_{\epsilon})u\|_{L_{x,y}^2}^2+O(h))+ Ce^{-\frac{c}{\epsilon^{\theta}}},
	\nonumber
	\end{split}
	\end{equation}
	where we have used Lemma \ref{concentration1chapter1}. Note that Lemma \ref{concentration1chapter1} is applicable even when the micro-local cut-off $\chi_{\delta_0}$ is matrix-valued.
		In summary we have
	$$\limsup_{h\rightarrow 0}|\mathrm{I}_{h,\epsilon}|\leq C(\epsilon^{1-2\theta}+e^{-\frac{c}{\epsilon^{\theta}}}).
	$$

	We now turn to the estimates of
	$\mathrm{II}_{h,\epsilon}$. This can be done from geometric argument. Let $$S_{\epsilon}:=\{(y,x,\xi):r(y,x,\xi)\geq\epsilon,y\leq 4\epsilon/c_0\}\cap W_0.$$
	We claim that for any ray $\gamma$ with $\gamma(0)\in\Gamma_0$ and $\Gamma(s_0)\in S_{\epsilon}$, $\gamma|_{[0,s_0]}\cap\mathcal{G}^{2,+}=\emptyset.$
	
	Indeed, by contradiction, assume that for some $\gamma$ and $s_1\in[0,s_0]$, we have $\rho_1=\gamma(s_1)\in\mathcal{G}^{2,+}$.   
	After time $s_1$, along $\gamma$ we have 
	$$ \dot{y}=2\eta,\dot{\eta}=\partial_yr\geq c_0/4,
	$$
	with $y(s_1)=\eta(s_1)=0,\eta(s_0)\geq \sqrt{\epsilon}$. This implies that $s_0-s_1\geq 4\sqrt{\epsilon}/c_0$ and $y(s_0)\geq c_0T^2/4\geq \frac{4\epsilon\delta_0}{c_0}$. The claim then follows.
	
	Now we write
	\begin{equation*}
	\begin{split}
	\mathrm{II}_{\epsilon}=&\left(\varphi_0\left(c_0y/\epsilon\right)\Upsilon_2q|A\varphi\textrm{Op}_h(\chi-\chi_{\epsilon})\varphi_1u\right)_{Y_+}
	\\+&\left(\left(1-\varphi_0\left(c_0y/\epsilon\right)\right)\Upsilon_2q|A\varphi\textrm{Op}_h(\chi-\chi_{\epsilon})\varphi_1u\right)_{Y_+}.
	\end{split}
	\end{equation*}
	
	From the discussion above, the first term on the right hand side above tends to $0$ as $h\rightarrow 0$ for any fixed $\epsilon>0$, while the second term is controlled from above by
	$$ \int_{\frac{\epsilon\delta_0}{4C}}^{\epsilon_0}\int\left|C(y,x,hD_x)q\right|^2dxdy
	$$
	for some zero order semi-classical tangential operator with principal symbol $c(y,x,\xi)$ such that supp $c\cap\{\xi=0\}=\emptyset$.  Applying Lemma \ref{concentration1chapter1}, we have
	$ \displaystyle{\limsup_{h\rightarrow 0}|\mathrm{II}_{\epsilon}|=0}
	$
	is true for any $\epsilon>0$. 
	Notice that the left hand side of \eqref{5chapter1} is independent of $\epsilon,$ we have
	$$ \limsup_{h\rightarrow 0}((-A_1hD_yv|hD_yv)_{\underline{\partial}Y_+}+\|\Psi_0v+\Psi_1hD_y v\|_{L^2(Y_+)}^2)=0.
	$$
	From the construction of $a_0,a_1$ and the corresponding expression of $\psi_0,\psi_1$, we can choose another different $\widetilde{a_0},\widetilde{a_1}$, such that the function $\widetilde{\psi}_0+\widetilde{\psi}_1\eta$ is independent of $\psi_0+\psi_1\eta$ on supp$(\chi)$(see appendix D). It follows then
	$$ \|v\|_{L^2(Y_+)}+\|hD_y v\|_{L^2(Y_+)}=o(1),\quad h\rightarrow 0,
	$$
	and this completes the proof of Proposition \ref{propagation of supportchapter1}.
\end{proof}

\section{Near $\mathcal{G}^{2,-}$ and $\mathcal{G}^k$ for $k\geq 3$}

This section is devoted to the proof of Proposition \ref{highorderchapter1}. Before proving it, we need some preparation. In what follows, we take tangential operators
$$A=\varphi\mathrm{Op}_h(a)\varphi_1,\quad  A^*=A+O_{L^2(\underline{\partial}Y_+)}(h^2).
$$
\begin{proposition}\label{integrating by part2chapter1}
	$$\frac{1}{h}(([P,A]u|u)_{Y_+}=\frac{1}{h}(Au|Pu)_{Y_+}-\frac{1}{h}(APu|u)_{Y_+}+O(h).
	$$
\end{proposition}
\begin{proof}
	The proof goes in  exactly the same way and much simpler than the diffractive case, and we omit it here.
\end{proof}
Recall that $r_0=r|_{y=0}$ and $r_1=\partial_yr|_{y=0}$.
Direct calculation gives 
$$ H_pa=2\eta\frac{\partial a}{\partial y}+\frac{\partial r}{\partial y}\frac{\partial a}{\partial \eta}+H_{-r}a.
$$
Pick $\rho_0\in\mathcal{G}^{2,-}\subset T^*\partial\Omega\diagdown\{0\}$ and a small neighborhood $U\subset T^*\partial\Omega\diagdown\{0\}$ of $\rho_0$.  
Let $L\subset U$ be a co-dimension $1$ hypersurface containing $\rho_0$ in $T^*\partial\Omega$ and transversal to the vector field $H_{-r_0}$. For small positive numbers $\delta,\tau>0$, define
$$ L^{\pm}(\delta,\tau;\rho_0):=\{\exp(tH_{-r_0})(\rho)\in U:\rho\in L,\textrm{dist }(\rho,\rho_0)\leq \delta^2,0\leq \pm t\leq \tau\}.
$$
When there is no risk of confusion, we write it simply as $L^{\pm}(\delta,\tau)$. Define also
\begin{equation*}
\begin{split}
&F^{\pm}(\delta,\tau):=\{(y,x,\xi):0\leq y\leq \delta^2,(x,\xi)\in L^{\pm}(\delta,\tau)\},\\
&F(\delta,\tau)=F^{+}(\delta,\tau)\cup F^-(\delta,\tau).
\end{split}
\end{equation*} 
Let $C_1>0$ sufficiently large and $\delta_0>0,\tau_0>0$ sufficiently small so that $\delta<\delta_0,\tau<\tau_0$
\begin{equation}\label{smallconditionchapter1}
|r(y,x,\xi)|\leq \frac{1}{2}C_1^2\delta^2
\end{equation}
in $F(\delta,\tau)$ for all $0<\delta\leq\delta_0,0<\tau\leq\tau_0$. With the same constant $C_1$, we further define the sets
\begin{equation*}
\begin{split}
V^{\pm}(\delta,\tau):=&\{(y,x,\eta,\xi):0\leq y\leq \delta^2/2,(x,\xi)\in L^{\pm}(\delta,\tau)\}\\
\cup &\{(y,x,\eta,\xi):\delta^2/2\leq y\leq \delta^2,(x,\xi)\in L^{\pm}(\delta,\tau),|\eta|\leq C_1\delta\},\\
W^{\pm}(\delta,\tau):=&\{(y,x,\eta,\xi):0\leq y\leq \delta^2/2,(x,\xi)\in L^{\pm}(\delta,\tau)\}\\
\cup &\{(y,x,\eta,\xi):\delta^2/2\leq y\leq \delta^2,(x,\xi)\in L^{\pm}(\delta,\tau),|\eta|\leq 2C_1\delta\},
\end{split}
\end{equation*}
$$V(\delta,\tau):=V^{+}(\delta,\tau)\cup V^-(\delta,\tau),\quad W(\delta,\tau)=W^{+}(\delta,\tau)\cup W^-(\delta,\tau).
$$
We need test functions constructed in \cite{Melrose-SjostrandI}:
\begin{lemma}[\cite{Melrose-SjostrandI}]\label{MS1chapter1}
	Let $I=[0,\epsilon_0)_y$. There exist $\sigma>0,\delta_0>0,\tau_0>0$, small enough with the hierarchy $\delta_0\ll\sigma\ll 1$, and families of smooth functions $a_{\delta}\in C_c^{\infty}(I\times U),g_{\delta},h_{\delta}\in C^{\infty}(Y_+\times\mathbb{R}_{\eta}\times\mathbb{R}^{d-1}_{\xi}\setminus\{0\})$,w here $0<\delta\leq \delta_0$, satisfying the following properties:
	\begin{enumerate}
		\item $a_{\delta}\geq 0,$ $\mathrm{supp}(a_{\delta})\subset F^{+}(\delta,\sigma\delta)\cup F^-(\delta,\delta^2)$ .
		\item $a_{\delta}(0,\exp(tH_{-r_0}(\rho_0)))\neq 0,\;\forall 0\leq t<\delta\sigma$.
		\item $a_{\delta}>0$ on $\mathrm{supp}( a_{\delta'})$ if $0<\delta'<\delta\leq\delta_0$ and $a_{\delta}$ is independent of $y$ for $0\leq y<\delta^2/2$.
		\item $g_{\delta}+h_{\delta}=-H_pa_{\delta}$.
		\item in $W(\delta,\tau)$, $g_{\delta}\geq 0$ and $g_{\delta}>0$ when $a_{\delta}\neq 0$.
		\item For any $m>1$ and any multiple index $\alpha\in\mathbb{N}^d$, $|g_{\delta
		}^{-\frac{1}{m}}\partial^{\alpha}g_{\delta}|=O_{\delta}(1)$, locally uniformly on $W(\delta,\tau_0)$, where the implicit constant inside $O_{\delta}(1)$ depends on $\alpha, m$ and $\delta$.
		\item $\mathrm{supp}(h_{\delta})\subset I\times L^-(\delta,\delta^2)\times \mathbb{R}_{\eta}$, and $\mathrm{supp}(g_{\delta})\cup \mathrm{supp}(h_{\delta})\subset \mathrm{supp}(a_{\delta})$, $g_{\delta},h_{\delta}$ are independent of $\eta$ for $0\leq y\leq \delta^2/2$.
	\end{enumerate}
\end{lemma}
For the convenience of the reader, we will recall the proof in the appendix D.

According to the lemma, we have $\partial (g_{\delta}^{1/2})=2g_{\delta}^{-1/2}\partial g_{\delta}=O(1)$, this implies that $g_{\delta}^{1/2}\in C^{\infty}(W(\delta,\tau))$.
Set $b_{\delta}:=g_{\delta}^{1/2}\in C^{\infty}(W(\delta,\tau))$. Note $b_{\delta}$ may not be smooth with compact support. We need split it into two parts as follows:  
Let $\phi_1\in C^{\infty}(\mathbb{R})$ such that $\phi_1\equiv 1$ if $0\leq y\leq \frac{\delta^2}{4}$ and $\phi_1\equiv 0$ if $y>\frac{3\delta^2}{8}$.
Let $\phi_2\in C^{\infty}(\Omega\times\mathbb{R}^d\setminus\{0\})$ with compact support in $x,\xi,\eta$ variables, such that $\phi_2\geq 0$ and $\phi_2\equiv 0$ whenever $y\leq \frac{\delta^2}{4}$ or $|\eta|>2C_1\delta$. Indeed, we can choose 
$\kappa\in C_c^{\infty}(\mathbb{R})$, non-negative, smooth and with compact support, such that $\kappa(z)\equiv 0$ when $|z|>2C_1\delta $ and $\kappa(z)\equiv 1$ when $|z|\leq \frac{3}{2}C_1\delta$. Now let $\phi_2(y,x,\eta,\xi)^2=(1-\phi_1(y)^2)\kappa(\delta^{-1}\eta)\chi_{\delta}(y,x,\xi)$ with $\chi_{\delta}|_{\mathrm{supp}(a_{\delta})}\equiv 1$, $\mathrm{supp}(\chi_{\delta})\subset F^{+}(\delta,\sigma\delta)\cup F^{-}(\delta,\sigma\delta)$.
We observe that 
$$ W^{\pm}(\delta,\tau)\cap\textrm{supp }(1-\phi_1^2-\phi_2^2)\subset\big\{(y,x,\eta,\xi):\frac{\delta^2}{4}\leq y\leq \delta^2,|\eta|>\frac{3}{2}C_1\delta,\big\}.
$$
We finally put $b_{\delta,j}:=\phi_jb_{\delta},j=1,2$. Note that $b_{\delta,1}\in C_c^{\infty}(F(\delta,\tau))$ is a tangential symbol (since for $y\geq\frac{\delta^2}{2},$ $\mathrm{supp}(g_{\delta})\subset \mathrm{supp}(a_{\delta})$ is compact) while $b_{\delta,2}\in C_c^{\infty}(W(\delta,\tau))$ is a usual interior symbol with compact support in $T^*\Omega$. %%%%%%%%%%%%%%%订正到此处
\subsection{Gliding case}
The propagation of support of $\mu$ near a gliding point in $\mathcal{G}^{2,-}$ can be stated as follows:

\begin{proposition}\label{glidingchapter1}
	Suppose $\rho_0\in\mathcal{G}^{2,-}$ and $L^{+}(\delta_0,\tau_0)\cup L^-(\delta_0,\tau_0)\subset\mathcal{G}^{2,-}$ for some $\delta_0,\tau_0>0$.  Then for any $\sigma>0$ with $\sigma\delta_0<\tau_0$, such that if
	$$ \{(y,x,\eta,\xi):0\leq y\leq \delta^2,(x,\xi)\in L^-(\delta,\delta^2;\rho_0)\}\cap \mathrm{supp }(\mu)=\varnothing$$
	for some $0<\delta\leq \delta_0,$ then $\exp(tH_{-r_0})(\rho_0)\notin $ $\mathrm{supp }(\mu)$ for any $t\in [0,\sigma \delta)$.
\end{proposition}
We need several lemmas.
\begin{lemma}\label{compositionlawchapter1}
	Suppose $a\in C_c^{\infty}(\mathbb{R}^{2d}), b\in C^{\infty}([0,1],C_c^{\infty}(\mathbb{R}^{2(d-1)}))$ with the following support property:
	$$a(y,x,\eta,\xi)\equiv 0 \textrm{ if } y\leq c_0<1 \textrm{ or } |\eta|>C_0|\xi|.$$
	Then the usual symbolic calculus for $a(y,x,hD_y,hD_x)b(y,x,hD_x)$ still valid. In particular,
	$$ a(y,x,hD_y,hD_x)b(y,x,hD_x)=c(y,x,hD_y,hD_x)+O_{L^2\rightarrow L^2}(h),
	$$
	with 
	$$ c(y,x,\eta,\xi)=a(y,x,\eta,\xi)b(y,x,\xi)
	$$
\end{lemma}
We postpone the proof in the appendix F.
\begin{lemma}\label{geometric1chapter1}
	Given any $\rho_1\in\mathcal{G}$, there exist $\delta_1>0,\tau_1>0,\sigma_1>0$ with $\delta_1\ll\sigma_1$ and  $\sigma_1\delta_1<\tau_1$ such that if $\rho\in T^*\partial\Omega$ and $\mathrm{dist }(\rho,\rho_1)\leq \delta^2$ for some $0<\delta\leq\delta_1$,  then $\mathrm{dist }(\gamma(s,\rho),\gamma(s,\rho_1))\leq C\delta^2$ for $|s|\leq \sigma_1\delta$. In particular, $\gamma(s,\rho)\in W(\delta,\tau_1)$ for all $|s|\leq \sigma_1\delta$.
\end{lemma}

\begin{proof}
	Write $\gamma(s,\rho)$ and $\mathrm{exp}(sH_{-r_0})(\rho)$ in coordinate as  $$\gamma_1(s)=(y(s),\eta(s),x(s),\xi(s)) \textrm{ and } \gamma_2(s)=(\tilde{y}(s),\tilde{\eta}(s),\tilde{x}(s),\tilde{\xi}(s)).$$ From $\dot{y}=2\eta,\dot{\eta}=O(1)$, we have $y(s)\leq Cs^2$ and the same estimate holds for $\tilde{y}(s)$.
	Let $$d(s)=|x(s)-\tilde{x}(s)|^2+|\xi(s)-\tilde{\xi}(s)|^2,$$ and then $\dot{d}(s)\leq C\sqrt{d(s)}$. This implies
	$d(s)\leq C_1\delta^2$ for all $|s|\leq \sigma_1\delta$. By the same argument, we have $\mathrm{dist }(\mathrm{exp}(sH_{-r_0})(\rho),\mathrm{exp}(sH_{-r_0})(\rho_1))\leq C\delta^2$. The conclusion then follows from the triangle inequality.
\end{proof}
We will see the crucial role of $\rho_0\in\mathcal{G}^{2,-}$ in the following lemma: 

\begin{lemma}\label{geometric2chapter1}
	Assume that $\delta_1, \tau_1$ are parameters given in the previous lemma. Suppose that $-C_0\leq \partial_yr(\rho)\leq -c_0<0$ for all $\rho\in W(\delta_1,\tau_1)$. Define $S_{\epsilon}=W(\delta_1,\tau_1)\cap\{r\geq\epsilon,y\leq\epsilon\}$ for  sufficiently small $\epsilon>0$. Then along any ray $\gamma(s,\rho_1)$ in $W(\delta_1,\tau_1)$ with $\rho_1\in S_{\epsilon}$, if $y(\gamma(-t,\rho_1))=0$ for some $0\leq t\leq\tau_1$, we have $r(y(\gamma(-t,\rho_1))\geq c_1\epsilon$, where $c_1$ depends only on $W(\delta_1,\tau_1)$.
\end{lemma}

\begin{proof}
	Assume $\rho_1=(y_1,x_1,\eta_1,\xi_1)\in S_{\epsilon}$ and $\gamma(s,\rho_1)=(y(s),x(s);\eta(s),\xi(s))$. Let $s_3=\inf\{0\leq s\leq \tau_1:y(-s)=0\}$. For $s\in[-s_3,0]$, $\dot{y}=2\eta,-C_0\leq\dot{\eta}=\partial_yr\leq -c_0$. 
	There are two possibilities. If $\eta_1\geq\sqrt{\epsilon}$, then $\eta(-s_3^+)\geq\eta_1>0$ since $\dot{\eta}<0$. Otherwise,  $\eta_1\leq-\sqrt{\epsilon}$, and we denote by $s_2=\inf\{s\in[0,s_1]:\eta(-s)=0\}$.  
	From
	$\displaystyle{\eta_1=\int_{-s_2}^{0}\dot{\eta}ds\geq-C_0s_2,}$
	we have $s_2\geq\frac{|\eta_1|}{C_0}$. Moreover,
	$$ y_1-y(-s_2)=2\eta_1s_2-\int_{-s_2}^{0}ds\int_s^0\ddot{y}ds'\leq 2\eta_1s_2+C_0s_2^2\leq -\frac{|\eta_1|^2}{C_0}.		
	$$
	Now from
	$$ y(-s_2)=y(-s_2)-y(-s_3)=-\int_{-s_3}^{-s_2}ds\int_{s}^{-s_2}\ddot{y}ds'\leq C_0|s_3-s_2|^2,
	$$
	we have $\displaystyle{|s_3-s_2|^2\geq \frac{y(-s_2)}{C_0}\geq\frac{(y(-s_2)-y_1)}{C_0}\geq \frac{|\eta_1|^2}{C_0^2}}$ and finally 
	$$ \eta(-s_3^+)=-\int_{-s_3}^{-s_2}\dot{\eta}ds\geq c_0|s_3-s_2|\geq \frac{c_0\sqrt{\epsilon}}{C_0}.
	$$
	The proof of Lemma \ref{geometric2chapter1} is then complete by applying the argument  above between any two adjacent zeros of $s\mapsto y(\gamma(-s,\rho_1))$.
\end{proof}

\begin{proof}[Proof of Proposition \ref{glidingchapter1}]
	For any $\delta'>0$, we define the operator
	$$N_{\delta'}=\frac{1}{ih}[P,A_{\delta'}]
	$$
	with principal symbol
	$ n_{\delta'}=-H_pa_{\delta'}=g_{\delta'}+h_{\delta'}.
	$
	Define operators
	$$ B_{\delta',j}:=\textrm{Op}_h(b_{\delta',j}),j=1,2,\quad  N_{\delta,3}=\textrm{Op}_h((1-\phi_1^2-\phi_2^2)n_{\delta'}).
	$$
	Write 
	$h_{\delta',j}=\phi_j^2h_{\delta'}, H_{\delta',j}=\textrm{Op}_h(h_{\delta',j}), j=1,2.
	$
	The proposition will follow if we can show that for any $\delta'<\delta,$ 
	\begin{equation}\label{positiveconclusionchapter1}
	\lim_{h\rightarrow 0}\sum_{j=1}^2\|B_{\delta',j}u\|_{L^2(Y_+)}^2=0
	\end{equation}

	We remark that $h_{\delta',1},b_{\delta',1}$ are both tangential symbols while $h_{\delta',2},b_{\delta',2}$ are interior symbols vanishing near the boundary. Observe also that $N_{\delta',3}$ is interior pseudo-differential operator with symbol vanishing near the boundary as well as on $p^{-1}(0)$, thanks to the fact that in $W(\delta',\tau)$, $|r(y,x,\xi)|\leq \frac{1}{2
	} C_1^2\delta'^2$. Thus $N_{\delta',3}u=o_{L^2(Y_+)}(1)$ as $h\rightarrow 0$ for $\delta'>0$ small enough.
	Moreover, from the assumption on the support of $\mu$ near the original point $\rho_0$ we have $H_{\delta',j}u=o_{L_{y,x}^2}(1)$.	Now set
	$$ M_{\delta',j}=\phi_j^2N_{\delta',j}-B_{\delta',j}^*B_{\delta',j}-H_{\delta',j},j=1,2.
	$$
	From symbolic calculus, we deduce that $M_{\delta',1}=O_{L^2\rightarrow L^2}(h)$ and it has the principal symbol depending only on $y,x,\xi$. Note that by definition of $M_{\delta',2}$, we will encounter the composition of tangential operator with interior operator $\textrm{Op}_h(\phi_2^2)$. Since $\phi_2$ has support far away form $y=0$ and $\eta=0$, the symbolic calculus still valid thanks to Lemma \ref{compositionlawchapter1}. Therefore $M_{\delta',2}=O_{L^2\rightarrow L^2}(h)$ is an interior operator. Finally, we obtain that
	$$ N_{\delta'}=N_{\delta',3}+\sum_{j=1}^2(B_{\delta',j}^*B_{\delta',j}+H_{\delta',j})+O_{L^2(Y_+)\rightarrow L^2(Y_+)}(h).
	$$
	Combining all the analysis above and applying Proposition \ref{integrating by part2chapter1}, we have
	\begin{equation}\label{desired controlchapter1}
	\begin{split}
	\sum_{j=1}^2\|B_{\delta',j}u\|_{L^2(Y_+)}^2&\leq o(1)+
	\frac{2}{h}\left|\Im([A_{\delta},h\mathrm{d}]q|u)_{Y_+}\right|
	+\frac{1}{h}\left|\Im(q|h\mathrm{d}^*(A_{\delta}u))_{Y_+}\right|\\
	=&o(1)+|( q|\Upsilon_1 u)_{Y_+}|+|(\Upsilon_2q|u)_{Y_+}|
	\end{split}
	\end{equation}
	where $\Upsilon_1,\Upsilon_2$ are compactly supported matrix-valued tangential operators with principal symbols vanishing outside $\mathrm{supp}(a_{\delta})$.
	
	To finish the proof, we need show that the right hand side of \eqref{desired controlchapter1} is $o(1)$ as $h\rightarrow 0$. Pick $\chi\in C_c^{\infty}(\mathbb{R})$ such that $\chi(s)\equiv 1$ if $0\leq s\leq \frac{1}{2}$ and $\chi(s)\equiv 0$ is $s\geq 1$. Let $\chi_{\epsilon}(y,x,\xi)=\chi(\epsilon^{-1}r(y,x,\xi))$.
	Denote by 
	$$\mathrm{I}_{h,\epsilon}=\left|(\Upsilon_1\varphi\mathrm{Op}_h(\chi_{\epsilon})\varphi_1 u| q)_{Y_+}\right|, 
	\mathrm{II}_{h,\epsilon}=\left|(\Upsilon_1(1-\varphi\textrm{Op}_h(\chi_{\epsilon})\varphi_1)u|q)_{Y_+}\right|.
	$$
	The treatment of $\mathrm{I}_{h,\epsilon}$ is exactly the same as in the diffractive case, so we have
	$$ \lim_{\epsilon\rightarrow 0}\limsup_{h\rightarrow 0}\mathrm{I}_{h,\epsilon}=0.
	$$
	For $\mathrm{II}_{h,\epsilon}$, we may assume that the interval of the integration over $y$ variable is $[0,\epsilon]$, since for $y\geq\epsilon$ we can use the rapid decreasing of $q$ as in the treatment of $\mathrm{I}_{h,\epsilon}$. According to Lemma \ref{geometric1chapter1} and Lemma \ref{geometric2chapter1}, the measure of $\Upsilon_1(1-\varphi\textrm{Op}_h(\chi_{\epsilon})\varphi_1)u$ vanishes, since all the backward generalized rays starting from each point in $S_{\epsilon}$ will enter the small neighborhood of $\rho_0\in \mathcal{G}^{2,-}$ by at most reflection at boundary. From the propagation theorem in the hyperbolic case(Proposition \ref{hyperbolicchapter1}),  the proof of Proposition \ref{glidingchapter1} is complete.
\end{proof}
\begin{remark}
	We remark that as a consequence of Proposition \ref{glidingchapter1}, the measure of $q$(or $h\nabla q$) also vanishes along $\mathrm{exp}(tH_{-r_0})$ for $t\in[0,\sigma\delta)$.
\end{remark}

%%%%%%%%%%%%%%%%%%%%%%%%%%%%%%%%%%%%%%%%%%%%%%%%%%%%%%%%%%%%%%%
\subsection{high order contact}

In this subsection we will use a new coordinate system in a neighborhood $\widetilde{W}_k$ of  $\rho_k\in\mathcal{G}^k$ in $[0,\epsilon_0]\times T^*\partial\Omega$:
$$ (y,\eta,x,\xi)\mapsto (y,\eta,z,\zeta),\quad z=(z_1,z'),\zeta=(\zeta_1,\zeta')
$$
with $ p=\eta^2-r,r=\zeta_1+yr_1(z,\zeta)+O(y^2),$
$ \zeta_1=r_0,$
where $ r_0=r|_{y=0},r_1=\partial_yr|_{y=0}.$
This is possible since
$ d_{x,\xi}r_0\neq 0,\textrm{ if }\xi\neq 0.
$
Along the generalized bicharacteristic curve $\gamma(s)$, $(z,\zeta)$ satisfies
$$ \dot{z}=-\partial_{\zeta}r(y(s),z(s),\zeta(s)),\quad \dot{\zeta}=\partial_zr(y(s),z(s),\zeta(s)).
$$
This implies that in $\widetilde{W}_k$, 
$\displaystyle{ -\dot{z_1}\sim 1>0, \textrm{ as } y\rightarrow 0,}$
and thus
$
s\mapsto z_1(s) $ is strictly decreasing. Moreover,
$ \dot{\zeta_1}\sim y\partial_{z_1}r_1$, as $y\rightarrow 0.$

Suppose now $k\geq 3$,  we have locally that
$$ \mathcal{G}^k:=\{(z,\zeta):\zeta_1=0,\partial_{z_1}^{l}r_1(z,\zeta)=0,\forall l\leq k-3,\partial_{z_1}^{k-2}r_1(z,\zeta)\neq 0 \}.
$$
Define
$ \Sigma_k:=\{(z,\zeta):\partial_{z_1}^{k-3}r_1(z,\zeta)=0,\partial_{z_1}^{k-2}r_1(z,\zeta)\neq 0\}.
$
From implicit function theorem, $\Sigma_k$ is locally a hypersurface and we can write it as
$$\Sigma_k=\{(z,\zeta):z_1=\Theta_k(z',\zeta)\}.
$$
$\mathcal{G}^k$ can be viewed locally as a closed subset of $\Sigma_k$.
Since the map $s\mapsto z_1(s)$ is bijective, we may assume that along each ray, $z_1(0)=\Theta_k(z'(0),\zeta(0))$, and 
$$ z_1(s)<\Theta_k(z'(s),\zeta(s)),s>0;\quad 
z_1(s)>\Theta_k(z'(s),\zeta(s)),s<0.
$$
We see that all the generalized rays are transversal to the codimension 2 manifold(locally) $\Sigma_k$.
Moreover, a ray passes $\Sigma_{k}$ if and only if $y(0)=0$ and $\zeta_1(0)=0$.
Now we define the set near $\rho_k$:
$$ \Sigma^{\pm}_k:=\{(y,\eta,z,\zeta)\in \mathrm{Car}(P)\cap \widetilde{W_k}:z_1\mp \Theta_k(z',\zeta)>0\}.
$$
Note that the gliding rays $\mathrm{exp}(sH_{-r_0})$ intersect transversally to $\Sigma_k$ and  $H_{-r_0}=-\partial_{z_1}$ inside $T^*\partial\Omega$. Thus we can re-parametrize the gliding flows by $z_1$. Moreover, $\Sigma^{\pm}_k\cap \mathcal{G}^j=\emptyset,\forall j\geq k,$ provided that we choose $\widetilde{W}_k$ small enough.  In other word, $z_1$ gives a foliation of $T^*\partial\Omega$ near $\Sigma_k$ for small $|z_1-\Theta_k(z',\zeta)|$.

The following proposition is a long time extension of Proposition \ref{glidingchapter1}, adapted to the notations introduced above.
\begin{proposition}\label{longtimeglidingchapter1}
	Suppose $\rho_0\in\mathcal{G}^{2,-}$ near $\rho_k\in \Sigma_k$ with coordinate $(z,\zeta)$, $z_1>\Theta_k(z',\zeta)$. Then there exists $\delta_0>0$, sufficiently small such that if
	$$ \{(y,x,\eta,\xi):0\leq y\leq \delta^2,(x,\xi)\in L^-(\delta,\delta^2;\rho_0)\}\cap \mathrm{supp }(\mu)=\varnothing$$
	for $0<\delta\leq \delta_0,$ then $\exp(sH_{-r_0})(\rho_0)\notin \mathrm{supp } (\mu)$ for any $s<z_1-\Theta_k(z',\zeta)$.
\end{proposition}

In other words, each generalized ray, issued from gliding set outside supp$(\mu)$ does not carry any singularity until it reaches some point in $\mathcal{G}^k$ for $k\geq 3$.

\begin{proof}
	The proof is purely topological. For each $\rho_0=(z,\zeta)\notin$ supp$(\mu)$ and $z_1>0$, let
	$ s_1:=\sup\{s:s\leq z_1-\Theta_k(z',\zeta), \exp(s'H_{-r_0})\notin \textrm{ supp }(\mu),\forall s'\in[0,s)\}.
	$
	The existence of $s_1$ is guaranteed by Proposition \ref{glidingchapter1}. It remains to show that $s_1=z_1-\Theta_k(z',\zeta)$. By contradiction, suppose $s_1<z_1-\Theta(z',\zeta)$, then the point $\rho_1=(z_1-s_1,z',\zeta)$ is in $\mathcal{G}^{2,-}$. We can apply Proposition \ref{glidingchapter1} again to obtain that for some small $\sigma_1>0$, 
	$\exp(\theta\sigma H_{-r_0})(\rho_1)\notin $supp $(\mu)$ for any $\theta\in[0,1]$. This is a contradiction of the choice of $s_1$. 
\end{proof}
As a consequence, we have
\begin{corollary}\label{longtimegliding1chapter1}
	Suppose $\rho_0\in \mathcal{G}^{2,-}$ and $\rho_0\notin  \mathrm{supp }(\mu)$. Let $\gamma(s)$ be the generalized ray passing $\rho_0$ with $\gamma(0)=\rho_0$. Then $\gamma(s)\notin \mathrm{supp }(\mu)$ for any $s\in[-s_0,s_0]$, provided that $\gamma|_{[-s_0,s_0]}\subset \mathcal{G}^{2,-}$. 
\end{corollary}
Combining the analysis near a diffractive point and a gliding point, we have already established the $k$-propagation property for $k=2$. We will argue by induction to prove $k$-propagation property for all $k\geq 3$. To this end, we need an intermediate step. Let us first introduce a notation 
$$
\Gamma(\rho_0;\delta):=\{(y,x;z,\zeta):0\leq y\leq\delta^2,(z,\zeta)\in L^{-}(\delta,\delta^2;\rho_0)\}
$$
and a definition
\begin{definition}[$k$-pre-propagation property]
	For $k\geq 2$, we say that k-pre-propagation property holds, if the following statement is true:
	
	For any $\rho_k\in\mathcal{G}^k$, there exists a neighborhood $V_k$ of $\rho_k$ in $T^*\partial\Omega$, and $0<\delta_k\ll \sigma_k\ll 1$, depending on $V_k$, such that for any $\displaystyle{\rho_0\in\big(\mathcal{G}^{2,-}\cup\bigcup_{3\leq j\leq k}\mathcal{G}^j\big)\cap V_k}$, if 
	$ \Gamma(\rho_0;\delta)\cap\mathrm{supp }(\mu)=\emptyset
	$
	for some $0<\delta<\delta_k$, then $\exp(sH_{-r_0})(\rho_0)\notin \mathrm{supp }(\mu)$ for all $s\in[0,\sigma_k\delta)$.
\end{definition}
The key step is the following inductive proposition.
\begin{proposition}\label{prepropagationchapter1}
	Suppose $k\geq 3$ and $(k-1)$-propagation property holds true, then $k$-pre-propagation property also holds true.	
\end{proposition}

We do some preparation before proving this proposition. Select a neighborhood $W_k$ of $\rho_k\in\mathcal{G}^k$ in $T^*\partial\Omega$ (and contained in $\widetilde{W}_k$) with compact closure such that in $W_k$ such that 
$ |\partial_{z_1}^{k-2}r_1(\rho)|\geq c_0>0
$ for all $\rho\in\ov{W_k}$.
By abusing the notation, we will refer $\mathcal{G}^k$ to be $\mathcal{G}^k\cap W_k$ in the sequel. According to the asymptotic behaviour of the flow $\mathrm{exp}(sH_{-r_0})$ as $s\rightarrow 0$,  we have for any given $(z_1=\Theta_k(z'_0,\zeta_0),z'_0,\zeta_0)\in\mathcal{G}^k$,
$$ r_1\circ\mathrm{exp}(sH_{-r_0})(z'_0,\zeta_0)=b_k(z'_0,\zeta_0)s^{k-2}+O(s^{k-1}),
$$  	
where $b_k\neq 0$ can be viewed as a function of points in $\mathcal{G}^k$.	
From compactness, we can choose $\sigma>0,\theta>0$ depending only on $\overline{W}_k$ such that for all $\rho\in\mathcal{G}^k$, 
$$ |b_k(\rho)|\geq\theta>0, |r_1\circ\exp(s^{H_{-r_0}})(\rho)|\geq \frac{1}{2}|b_ks^{k-2}|,\quad \forall s\in [-\sigma,0)\cup(0,\sigma].
$$
Now we define a smaller neighborhood $V_k$ of $\rho_k$ such that for any $\rho_0\in V_k$, and $\delta_k>0,\sigma_k>0$, $\mathrm{exp}(sH_{-r_0})(L^{\pm}(\delta_k,\delta_k^2;\rho_0))\subset W_k$ for all $|s|\leq \sigma_k\delta_k$. Moreover, $|r_1\circ\exp(sH_{-r_0})(\rho_0)|\leq \delta_k$. We also put $\widetilde{W}_k=[0,\delta_k^2]\times W_k$, $\widetilde{V}_k=[0,\delta_k^2]\times V_k$.

Choosing a cut-off $\tilde{a}_{\delta}\in C_c^{\infty}$ with $\tilde{a}_{\delta}\equiv 1$ near $\rho_k$, we define
$$ S_{\delta,\epsilon}:=\textrm{ supp }(\tilde{a}_{\delta})\cap \{y\leq\epsilon,r\geq \epsilon\}
$$
for any $0<\epsilon\ll \delta$ .
Note that near $S_{\delta,\epsilon}$ (thus near $\rho_k\in\mathcal{G}^{k},k\geq 3$) we have $|r_1|\leq \delta_k$, and this implies that $\zeta_1\gtrsim \epsilon$, near $S_{\delta,\epsilon}$. We divide the proof of Proposition \ref{prepropagationchapter1} into several lemmas. 
\begin{lemma}\label{geometric3chapter1}
	Given any generalized ray $\gamma(s)=(y(s),\eta(s),z(s),\zeta(s))$ with $\gamma(s_0)\in \Gamma(\rho_0;\delta)\cap\mathcal{G}^{2,-}$ and $\gamma(s_1)\in S_{\delta,\epsilon}$. Assume that $\gamma|_{[s_0,s_1]}\subset \mathrm{Car}(P)\cap\widetilde{W}_k$, then $\gamma(s)\notin \mathcal{G}^k$ for all $s\in[s_0,s_1]$.
\end{lemma}
\begin{proof}
	Take $\Gamma^+(\rho_0;\delta):=\Gamma(\rho_0;\delta)\cap \Sigma_k^+$ and identify points in $\Sigma_k^{\pm}$ as their projection to $(y,x,\xi)$. 
	Let $F_k$(may be empty) be the union of generalized rays issued from $\Gamma^+(\rho_0;\delta)$ which meet $\mathcal{G}^k$. Note that along both real trajectories $\gamma(s)$ and $\exp(sH_{-r_0})$, $s\mapsto z_1$ is strictly decreasing, it suffices to show that  $F_k\cap S_{\delta,\epsilon}\subset \Sigma_k^+$ since generalized rays intersect with $\Sigma_k$ transversally,.
	
	We argue by contradiction. Assume that some ray in $F_k$ satisfies 	$\gamma(s_0)\in \Gamma^+(\rho_0;\delta),$ $\gamma(0)\in\mathcal{G}^k,$ and $\gamma(s_1)\in S_{\delta,\epsilon}$ for $s_0<0<s_1$. Write $\mathrm{exp}(sH_{-r_0})(\gamma(0))=(\tilde{z}(s),\tilde{\zeta}(s))$, and
	$$ r_1\circ \mathrm{exp}(sH_{-r_0})(z'(0),\zeta(0))=r_1(\tilde{z}(s),\tilde{\zeta}(s))=b_k s^{k-2}+O(s^{k-1}), s\rightarrow 0,
	$$
	More precisely, we have
	$$ |b_k(z'(0),\zeta(0))|\geq \theta>0, |r_1(\tilde{z}(s),\tilde{\zeta}(s))|\geq \frac{1}{2}|b_ks^{k-2}|,\forall s\in [-\sigma,0)\cup(0,\sigma].
	$$ 
	After shrinking support of $a_{\delta}$ if necessary, we may assume that $s_1<\sigma$.	According to the parity of $k$ and the sign of $b_k$, there are several situations.
	
	If $b_k<0, $ then for any $k$ we have $\gamma(s)\in\mathcal{G}^{2,-}$ for all $s\in(0,\sigma)$. This is impossible since $r\circ\gamma(s_1)\geq \epsilon$. Otherwise $b_k>0$, in this case we have 
	$r_1(\tilde{z}(s),\tilde{\zeta}(s))\geq b_ks^{k-2}/2,$ for all $s\in (0,\sigma),
	$
	and 
	\begin{equation}\label{negativechapter1}
	\begin{split}
	&(\partial_{z_1}r_1)(\tilde{z}(s),\tilde{\zeta}(s))\\=&(\partial_{z_1}r_1)\circ \mathrm{exp }(sH_{-r_0}(z'(0),\zeta(0))\\
	=&-\partial_s\left(r_1\circ \mathrm{exp }(sH_{-r_0}(z'(0),\zeta(0))\right)\\
	=&-(k-2)b_ks^{k-3}+O(s^{k-2})\leq 0,\forall s\in[0,\sigma),
	\end{split}
	\end{equation}
	thanks to $\partial_{\zeta'}r|_{y=0}=\partial_zr|_{y=0}=0$. 
	Taking the difference with real trajectory $\gamma(s)=(y(s),\eta(s);z(s),\zeta(s))$, we have
	\begin{equation*}
	\begin{split}
	&\partial_{z_1}r_1(y(s),z(s),\zeta(s))-
	\partial_{z_1}r_1(\tilde{z}(s),\tilde{\zeta}(s))\\=&
	\left(\partial_{z_1}r_1(0,z(s),\zeta(s))-\partial_{z_1}r_1(\tilde{z}(s),\zeta(s))\right)
	+(\partial_{z_1}r_1(\tilde{z}(s),\zeta(s))-\partial_{z_1}r_1(\tilde{z}(s),\tilde{\zeta}(s)))\\
	+&(\partial_{z_1}r_1(y(s),z(s),\zeta(s))-\partial_{z_1}r_1(0,z(s),\zeta(s))).
	\end{split}
	\end{equation*}
	Using the fact that $(z(0),\zeta(0))=(\tilde{z}(0),\tilde{\zeta}(0))$ and $y(s)=O(s^2)$, we have
	$$\partial_{z_1}r_1(y(s),z(s),\zeta(s))-
	\partial_{z_1}r_1(\tilde{z}(s),\tilde{\zeta}(s))=O(s).
	$$
	This together with \eqref{negativechapter1} yield
	\begin{equation}\label{dynamical inequalitychapter1}
	\begin{split}
	&\dot{\zeta_1}\leq y\partial_{z_1}r_1(y(s),z(s),\zeta(s))+C_0y^2\leq C_0(y^2+ys), \quad \dot{y}=2\eta,\\
	& \eta^2=\zeta_1+yr_1(z,\zeta)+O(y^2),\quad (\zeta_1(0),y(0))=(0,0),
	\end{split}
	\end{equation}
	where the constant $C_0$ and the implicit constant inside the big $O$ only depends on supp$(a_{\delta})$. 
	
	Applying the formula
	$\displaystyle{H_p^ky(0)=2(H_{-r_0})^{k-2}r_1=2b_k(k-2)!>0}$ and Taylor expansion, we have
	\begin{equation}\label{verticalchapter1}
	\begin{split}
	&y(s)=\frac{2b_k}{k(k-1)}s^k+O(s^{k+1})\geq \frac{b_k}{k(k-1)}s^k,s\in (0,\sigma),\\
	&\dot{y}(s)=\frac{2b_k}{k-1}s^{k-1}+O(s^k)>\frac{b_k}{k-1}s^{k-1}>0,s\in(0,\sigma).
	\end{split}
	\end{equation} 
	Injecting in   \eqref{dynamical inequalitychapter1}, we have 
	$ \dot{\zeta_1}(s)\leq C_0(\epsilon^2+\epsilon s)
	$ for all $s>0$ small as long as $y(s)\leq\epsilon$ and $\gamma(s)\notin S_{\delta,\epsilon}$. 
	For these $s$,
	$$ \zeta_1(s)\leq C_0(\epsilon^2s+\epsilon s^2/2). $$ 
	Setting
	$ s_2=\inf\{0\leq s\leq s_1:\gamma(s)\in S_{\delta,\epsilon}\},
	$
	we know that along the flow,
	$ 2\sqrt{\epsilon}= 2\eta(s_2)=\dot{y}(s_2), $ and this implies that $s_2\sim \epsilon^{\frac{1}{2(k-1)}}
	$ since $y(s)>\epsilon$ if $s>\epsilon^{\frac{1}{2(k-1)}}$.
	
	In summary, we have
	$$ \epsilon\leq r\circ \phi_{s_2}\leq 2C_0 \epsilon^{1+\frac{1}{2(k-1)}}+\delta_k \epsilon+C_1\epsilon^2.
	$$
	However, this contradicts to $ r=\zeta_1+yr_1+O(y^2),$ provided that $\delta_k<1$, $\epsilon\ll \delta_k <1$. 
\end{proof}

\begin{lemma}\label{8.13}
	The conclusion of Proposition \ref{prepropagationchapter1} holds if $\rho_0\in\mathcal{G}^{2,-}$
\end{lemma}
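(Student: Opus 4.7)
I combine Proposition \ref{longtimegliding} with the inductive $(k-1)$-propagation hypothesis via a maximality argument along the gliding orbit. Under the hypothesis $\Gamma(\rho_0;\delta)\cap\mathrm{supp}(\mu)=\varnothing$, Proposition \ref{longtimegliding} applied at $\rho_0\in\mathcal{G}^{2,-}\cap V_k$ already delivers $\exp(sH_{-r_0})(\rho_0)\notin\mathrm{supp}(\mu)$ for every $s\in[0,s^\dagger)$, where $s^\dagger:=z_1-\Theta_k(z',\zeta)$ is the time at which the gliding orbit from $\rho_0=(z,\zeta)$ first reaches $\Sigma_k$. If $\sigma_k\delta\leq s^\dagger$ the conclusion is immediate, so I concentrate on the case $s^\dagger<\sigma_k\delta$.

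In that case I set
$$s^*:=\sup\{s\in[0,\sigma_k\delta)\;:\;\exp(tH_{-r_0})(\rho_0)\notin\mathrm{supp}(\mu),\ \forall t\in[0,s)\}\geq s^\dagger>0,$$
and argue by contradiction that $s^*=\sigma_k\delta$. Otherwise $\rho^*:=\exp(s^*H_{-r_0})(\rho_0)\in\mathrm{supp}(\mu)\cap\mathcal{G}$, and the reparametrized generalized bicharacteristic $\tilde\gamma(s):=\exp((s^*+s)H_{-r_0})(\rho_0)$ satisfies $\tilde\gamma(0)=\rho^*$ and $\tilde\gamma|_{(-s^*,0)}\cap\mathrm{supp}(\mu)=\varnothing$. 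Provided $\rho^*\in\bigcup_{j=2}^{k-1}\mathcal{G}^j$, the $(k-1)$-propagation hypothesis forces $\rho^*\notin\mathrm{supp}(\mu)$, the required contradiction, and hence $s^*=\sigma_k\delta$.

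Everything therefore reduces to the geometric claim $\rho^*\notin\mathcal{G}^k$. In the local coordinates near $\rho_k$, $\mathcal{G}^k$ is cut out inside $\mathcal{G}$ by the $k-2$ equations $r_1=\partial_{z_1}r_1=\cdots=\partial_{z_1}^{k-3}r_1=0$, so it meets a generic gliding orbit $\{(z',\zeta)=\mathrm{const}\}$ only on an exceptional codimension-$(k-3)$ locus---essentially the fiber through $\rho_k$ itself. By shrinking $V_k$ and choosing $\sigma_k\ll\delta_k$ appropriately, a tangential analog of Lemma \ref{geometric3} shows that in the window $[0,\sigma_k\delta)$ the orbit either misses $\mathcal{G}^k$ altogether (so $\rho^*\in\mathcal{G}^j$ with $2\leq j\leq k-1$) or can only hit $\mathcal{G}^k$ after time $\sigma_k\delta$. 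The main difficulty lies precisely in this exceptional configuration where $\rho_0$ sits on the gliding orbit through $\rho_k$: the parameters $V_k,\sigma_k,\delta_k$ must be carefully tuned, exploiting $\sigma_k\ll\delta_k$, so that the pathological first crossing of $\mathcal{G}^k$ falls outside the propagation window.
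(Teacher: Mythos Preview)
Your approach is fundamentally different from the paper's, and it contains a genuine gap that cannot be fixed by ``tuning parameters''.

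The paper does \emph{not} argue by propagating along the gliding orbit and hoping to avoid $\mathcal{G}^k$. Instead it repeats the PDE estimate of Proposition~\ref{gliding}: it constructs the test operators $A_{\delta}$, $B_{\delta,j}$ from Lemma~\ref{MS1}, obtains the inequality
\[
\sum_{j=1}^2\|B_{\delta,j}u\|_{L^2}^2\leq o(1)+\frac{1}{h}\bigl|([h\,\mathrm{div},A_{\delta}]u\mid q)_{\Omega}\bigr|+\frac{1}{h}\bigl|([A_{\delta},h\nabla]q\mid u)_{\Omega}\bigr|,
\]
and then splits the pressure terms with the cut-off $\chi_{\epsilon}=\chi(\epsilon^{-1}r)$ into $I_{h,\epsilon}$ and $II_{h,\epsilon}$. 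The part $I_{h,\epsilon}$ is handled exactly as in the diffractive case. For $II_{h,\epsilon}$ one uses Lemma~\ref{geometric3}: every generalized ray that starts in $\Gamma(\rho_0;\delta)$ and reaches the \emph{interior} set $S_{\delta,\epsilon}=\{y\leq\epsilon,\ r\geq\epsilon\}$ avoids $\mathcal{G}^k$. Therefore the $(k-1)$-propagation hypothesis applies to those rays, the measure vanishes on $S_{\delta,\epsilon}$, and $II_{h,\epsilon}\to 0$. The conclusion then follows directly from $\|B_{\delta',j}u\|_{L^2}\to 0$ without ever asserting that the gliding orbit itself avoids $\mathcal{G}^k$.

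Your maximality argument, by contrast, hinges on the claim that $\rho^*=\exp(s^*H_{-r_0})(\rho_0)\notin\mathcal{G}^k$. This claim is false in general and cannot be rescued. Take $\rho_0\in\mathcal{G}^{2,-}\cap V_k$ lying on the gliding orbit through $\rho_k\in\mathcal{G}^k$ itself, at parameter distance $d>0$ from $\rho_k$. Since $V_k$ is a neighborhood of $\rho_k$, such $\rho_0$ exist with $d$ arbitrarily small; in particular one may have $d<\sigma_k\delta$ for the given $\delta$. Then the gliding orbit hits $\rho_k\in\mathcal{G}^k$ at time $d\in[0,\sigma_k\delta)$, so $\rho^*=\rho_k\in\mathcal{G}^k$ and the $(k-1)$-propagation hypothesis does not apply. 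No ``tangential analog'' of Lemma~\ref{geometric3} helps here: the whole point of that lemma is that rays which enter the interior region $\{r\geq\epsilon\}$ avoid $\mathcal{G}^k$; the gliding orbit stays on $\{r_0=0\}$ and never enters that region, so the mechanism of Lemma~\ref{geometric3} is simply absent. The required step at $\rho^*\in\mathcal{G}^k$ is exactly the $k$-propagation property you are ultimately trying to establish, so your argument becomes circular.
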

\begin{proof}
	Adapting the notations and argument in the proof of Proposition \ref{glidingchapter1}, we have 
	
	\begin{equation}\label{desired control1chapter1}
	\begin{split}
	\sum_{j=1}^2\|B_{\delta,j}u\|_{L^2(Y_+)}^2&\leq o(1)+
	\frac{2}{h}\left|\Im([A_{\delta},h\mathrm{d}]q|u)_{Y_+}\right|
	+\frac{1}{h}\left|\Im(q|h\mathrm{d}^*(A_{\delta}u))_{Y_+}\right|.
	\end{split}
	\end{equation}	
	The goal is to show that the last two terms on the right hand side tend to $0$ as $h\rightarrow 0$. 
	
	We denote by $\tilde{\gamma}(s)$ the gliding ray $\mathrm{exp}(sH_{-r_0})$ such that $\tilde{\gamma}(s_0)=\rho_0$ for some $s_0<0$. Suppose $\tilde{\gamma}(0)=\rho\in\mathcal{G}^k$ for some $k\geq 3$ and $\tilde{\gamma}(s)\in\mathcal{G}^{2,-}$ for $s\in(s_0,0)$. In view of Corollary \ref{longtimegliding1chapter1}, we may assume that $\rho_0$ is close enough to $\rho$, and $|s_0|$ is small. 
	Pick $\chi\in C_c^{\infty}(\mathbb{R})$ such that $\chi(s)\equiv 1$ if $0\leq s\leq \frac{1}{2}$ and $\chi(s)\equiv 0$ if $s\geq 1$. For any $\epsilon>0$, let $\chi_{\epsilon}(y,x,\xi)=\chi(\epsilon^{-1}r(y,x,\xi))$.
	Let
	$$\mathrm{I}_{h,\epsilon}=\frac{2}{h}\left|(h\mathrm{d}^*(\varphi\textrm{Op}_h(\chi_{\epsilon})\varphi_1u)|q)_{Y_+}\right|, 
	\mathrm{II}_{h,\epsilon}=\frac{2}{h}\left|(h\mathrm{d}^*(1-\varphi\textrm{Op}_h(\chi_{\epsilon})\varphi_1)u|q)_{Y_+}\right|.
	$$
	The treatment of $\mathrm{I}_{h,\epsilon}$ is exactly the same as in the diffractive case, we have
	$$ \lim_{\epsilon\rightarrow 0}\limsup_{h\rightarrow 0}\mathrm{I}_{h,\epsilon}=0.
	$$
	For $\mathrm{II}_{h,\epsilon}$, again, we only concern about the integration over $[0,\epsilon]$ in $y$ variable.
	From Lemma \ref{geometric3chapter1}, any ray entering $S_{\delta,\epsilon}$ can at most pass $\mathcal{G}^j$ for $j<k$. Applying $(k-1)$-propagation property, we deduce that for any cut-off $\varphi_{\epsilon}$ with supp$(\varphi_{\epsilon})\subset S_{\delta,\epsilon}$,  $\mathrm{supp}(\varphi_{\epsilon})\cap $ supp$(\mu)=\emptyset$. Therefore
	$$ \lim_{h\rightarrow 0}\mathrm{II}_{h,\epsilon}=0
	$$
	for any $\epsilon>0$. This completes the proof of Lemma \ref{8.13}.
\end{proof}

%%%%%%%%%%%%%%%%%%%%%%%
\begin{lemma}\label{8.14}
	The conclusion of Proposition \ref{prepropagationchapter1} holds if $\rho_0\in\mathcal{G}^{j}$ for some $3\leq j\leq k$.
\end{lemma}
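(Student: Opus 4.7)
The plan is to mirror the argument of the preceding lemma (the $\rho_0\in\mathcal{G}^{2,-}$ case): I build Ivrii-type test functions $a_\delta$ centered at $\rho_0$ via Lemma \ref{MS1}, extract the positive-commutator inequality, and reduce to the vanishing of two error terms involving the pressure. The structural input for $\rho_0\in\mathcal{G}^{j}$ with $3\leq j\leq k$ is the Taylor expansion $r_1\circ\exp(sH_{-r_0})(\rho_0)=a_j s^{j-2}+O(s^{j-1})$ with $a_j\neq 0$: the gliding trajectory exits $\mathcal{G}^j$ at once for $|s|>0$ small and lands in $\mathcal{G}^{2,-}$ or $\mathcal{G}^{2,+}$. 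Thus, away from $\rho_0$ itself, the preceding lemma (or Proposition \ref{diffractive}) already handles propagation; our remaining task is essentially the short segment near $\rho_0$.

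First I would assemble the operators $A_\delta, B_{\delta,1}, B_{\delta,2}, H_{\delta,j}, N_{\delta,3}$ exactly as in Proposition \ref{gliding}. Proposition \ref{integrating by part2}, combined with $P_hu=f-h\nabla q$ and $h\,\mathrm{div}\,u=0$, then yields
\begin{equation*}
\sum_{j=1}^{2}\|B_{\delta,j}u\|_{L^2}^2\;\leq\; o(1)\;+\;\tfrac{1}{h}\bigl|\bigl([h\,\mathrm{div},A_\delta]u\,\big|\,q\bigr)_{\Omega}\bigr|\;+\;\tfrac{1}{h}\bigl|\bigl([A_\delta,h\nabla]q\,\big|\,u\bigr)_{\Omega}\bigr|,
\end{equation*}
where $o(1)$ absorbs $N_{\delta,3}$ (whose principal symbol vanishes off $p=0$) and $H_{\delta,j}$ (supported in $L^{-}(\delta,\delta^2)\subset\Gamma(\rho_0;\delta)$, hence outside $\mathrm{supp}(\mu)$ by hypothesis). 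Both commutator terms are then split with the cut-off $\chi_\epsilon(y,x,\xi):=\chi(\epsilon^{-1}r(y,x,\xi))$. The glancing piece $\mathrm{Op}_h(\chi_\epsilon)$ is treated verbatim as in the diffractive case: Lemma \ref{concentration1} gives exponential decay of $q$ off the boundary and Proposition \ref{priori} gives the non-concentration of $u$ in $\{r\lesssim\epsilon\}$, yielding $\lim_{\epsilon\to 0}\limsup_{h\to 0}I_{h,\epsilon}=0$.

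The main obstacle is the hyperbolic complement $II_{h,\epsilon}$, supported in $S_{\delta,\epsilon}$. To invoke the $(k-1)$-propagation hypothesis there, I need an analogue of Lemma \ref{geometric3}: any backward generalized ray issued from a point of $S_{\delta,\epsilon}$ and reaching $\Gamma(\rho_0;\delta)$ meets $\mathcal{G}^k$ only at its endpoint in $\Gamma(\rho_0;\delta)$ (which is disjoint from $\mathrm{supp}(\mu)$), never at an intermediate time. The proof is the same contradiction argument as in Lemma \ref{geometric3}: assuming $\gamma(s^*)\in\mathcal{G}^k$ with $s_0<s^*<s_1$, the identity $H_p^k y(s^*)=2(k-2)!\,a_k$ forces $y(s^*+s)\gtrsim s^k$, while $\dot{\zeta}_1\leq C(y^2+ys)$ off $S_{\delta,\epsilon}$ yields $\zeta_1(s)\leq C(\epsilon^2 s+\epsilon s^2/2)$, in conflict with the lower bound $r(\gamma(s_1))\geq\epsilon$ for $\epsilon$ small. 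Transversality of generalized rays to the codimension-$2$ submanifold $\Sigma_k$ ensures each such ray intersects $\mathcal{G}^k$ at an isolated point which, inside the small neighborhood $V_k$, can only be the endpoint sitting in $\Gamma(\rho_0;\delta)$. Consequently, the $(k-1)$-propagation hypothesis applies along every intermediate segment and gives $\lim_{h\to 0}II_{h,\epsilon}=0$ for each fixed $\epsilon>0$. Letting $\epsilon\to 0$ in the main inequality yields $\|B_{\delta,j}u\|_{L^2}\to 0$, which is exactly the $k$-pre-propagation claim at $\rho_0$.
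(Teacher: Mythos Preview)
Your approach is correct in spirit, but it takes a substantially different route from the paper's. You re-run the full Ivrii/Melrose--Sj\"{o}strand commutator machinery (Lemma \ref{MS1}, the inequality \eqref{desired control1}, the $\chi_\epsilon$-splitting, and a variant of Lemma \ref{geometric3}) with $\rho_0\in\mathcal{G}^j$ as the center. The paper instead observes that no new commutator estimate is needed at all: since $\rho_0\in\Gamma(\rho_0;\delta)$, the hypothesis already gives $\rho_0\notin\mathrm{supp}(\mu)$, hence a whole neighborhood of $\rho_0$ lies outside $\mathrm{supp}(\mu)$; and for $t>0$ small the gliding point $\exp(tH_{-r_0})(\rho_0)$ lies in $\mathcal{G}^{2,-}$ or $\mathcal{G}^{2,+}$ (because $r_1\circ\exp(tH_{-r_0})(\rho_0)\sim a_j t^{j-2}\neq 0$). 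In the $\mathcal{G}^{2,-}$ branch the paper simply invokes Corollary \ref{longtimegliding1}. In the $\mathcal{G}^{2,+}$ branch it traces the \emph{generalized} backward ray from $\exp(s_0H_{-r_0})(\rho_0)$, compares it with the gliding flow to show the starting point $\rho$ lands in $\{0\leq y\leq\delta^2,\ |(z,\zeta)-\rho_0|\leq\delta^2\}\subset\Gamma(\rho_0;\delta)$, checks that this ray never meets $\mathcal{G}^k$ on $(0,s_0)$, and then applies the $(k-1)$-propagation hypothesis. What your approach buys is uniformity: all glancing cases are handled by one and the same positive-commutator template. What the paper's approach buys is economy: it reduces the $\mathcal{G}^j$ case entirely to already-established $\mathcal{G}^{2,\pm}$ results and a short flow-comparison, with no new analytic estimate. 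Note also that your plan tacitly assumes Lemma \ref{MS1} applies verbatim with $\rho_0\in\mathcal{G}^j$ (it is stated for $\rho_0\in\mathcal{G}^{2,-}$); this does go through since the proof in Appendix C only uses transversality of $H_{-r_0}$ and $|r|\lesssim\delta^2$, but you should say so. Likewise, when $j=k$ your analogue of Lemma \ref{geometric3} must allow the backward ray to meet $\mathcal{G}^k$ at its endpoint in $\Gamma(\rho_0;\delta)$; you handle this correctly by openness of the complement of $\mathrm{supp}(\mu)$, but the case $\Gamma(\rho_0;\delta)\not\subset\Sigma_k^+$ (which can occur when $\rho_0\in\Sigma_k$) deserves a word.
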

\begin{proof}	
	Taking a micro-local cut-off $\psi(y,x,\xi)$ with support near $\rho_0$,  we have $$\|\varphi\textrm{Op}_h(\psi)\varphi_1u\|_{L^2(Y_+)}=o(1)$$ from the assumption that $\rho_0\notin $ supp$(\mu)$. Note that along the flow of $H_{-r_0}$ and on  supp$(1-\psi)\cap V_k$ we have
	$ |r_1(0,x,\xi)|\geq c(\psi,\delta)>0.
	$
	Hence from Corollary \ref{longtimegliding1chapter1}, if $\mathrm{exp}(tH_{-r_0})(\rho_0)\in \mathcal{G}^{2,-}$ for all $t\in(0,\sigma\delta)$, and then  $\mathrm{exp}(tH_{-r_0})(\rho_0)\notin$ supp$(\mu)$.  Otherwise $\mathrm{exp}(tH_{-r_0})(\rho_0)\in\mathcal{G}^{2,+}$ for all $t\in(0,\sigma\delta)$, we claim that we still have $\mathrm{exp}(tH_{-r_0})(\rho_0)\notin $supp$(\mu)$ from geometric consideration.
	
	Indeed, by considering the backward generalized ray, we conclude that for any $s_0\in (0,\sigma_k\delta)$, there exists $\rho\in\widetilde{W_k}$, so that $\gamma(s_0,\rho)=\mathrm{exp}(s_0H_{-r_0})(\rho_0)$ where $\gamma(s,\rho)$ is the generalized ray issued from $\rho$. From this fact we must have $\gamma(s,\rho)\notin \mathcal{G}^k$ for $s\in(0,s_0)$, since any ray intersecting with $\mathcal{G}^k$ will enter $T^*\Omega$ or $\mathcal{G}^{2,-}$ immediately, provided that the neighborhood $W_k$ is chosen to be small enough. By $(k-1)$-propagation property, if suffices to show that $\rho\notin\mathrm{supp}(\mu)$.
	
	Therefore, by definition of $\Gamma(\rho_0;\delta)$,  we only need to show that
	$$ \rho\in\{(y,z,\zeta):0\leq y\leq \delta^2,|(z,\zeta)-\rho_0|\leq\delta^2\}.
	$$
	We will prove this by comparing two flows $\mathrm{exp}(sH_{-r_0})(\rho_0)=(\tilde{z}(s),\tilde{\zeta}(s))$ and $\gamma(s,\rho)=(y(s),\eta(s),z(s),\zeta(s))$.
	Taking the difference of the two, we have
	\begin{equation*}
	\begin{split}
	\frac{d}{ds}\left(z_1(s)-\tilde{z}_1(s)\right)=&-\partial_{\zeta_1}r(y(s),z(s),\zeta(s))+\partial_{\zeta_1}r(0,\tilde{z}(s),\tilde{\zeta}(s))
	=O(y(s)),
	\end{split}
	\end{equation*} 
	\begin{equation*}
	\begin{split}
	\frac{d}{ds}(z'(s)-\tilde{z'}(s))=O(y(s)),\;
	\frac{d}{ds}(\zeta(s)-\tilde{\zeta}(s))=O((y(s)),\;
	\frac{dy}{ds}=2\eta(s).
	\end{split}
	\end{equation*} 
	Note that $|\eta|^2=|r|=O(1)$ and $y(s_0)=0,\tilde{z}(s_0)=z(s_0),\tilde{\zeta}(s_0)=\zeta(s_0)$, we have 
	$$ y(s)\leq C(s-s_0)^2\textrm{ for all } s\in [0,s_0].
	$$  
	Hence $y(0)\leq C\sigma_k^2\delta^2<\delta^2$, provided that $\sigma_k^2<1/C$. Moreover,
	$$ |(z(0),\zeta(0))-\rho_0|\leq Cs_0^3\leq C\sigma_k^3\delta^3\leq \delta^2.
	$$
This completes the proof of Lemma \ref{8.14} as well as Proposition \ref{prepropagationchapter1}.
\end{proof}

\begin{proposition}\label{8.15}
	\label{induction stepchapter1}
	Suppose that (k-1)-propagation property holds. Then k-pre-propagation property implies k-propagation property.
\end{proposition}
\begin{proof}
	Up to re-parameter the flow, we may assume that $\rho_0\in\mathcal{G}^k$ and $\gamma(s)$ is the generalized ray such that $\gamma(0)=\rho_0$. We also denote $\gamma(s)$ by $\gamma(s,\rho_0)$ in view of flow map. Suppose $\gamma(s_0)\notin $supp$(\mu)$ for some $s_0<0$ and $\gamma|_{[s_0,0)}\cap$supp$(\mu)=\emptyset$. Our goal is to show that $\rho_0\notin $supp$(\mu)$.  Let $\sigma_{k-1}>0$ be the required length in the definition of $(k-1)$-propagation property. 
	
	Let $\delta_k>0,\sigma_k>0$ and $V_k$, neighborhood of $\rho_0\in\mathcal{G}^k$ in $T^*\partial\Omega$ and $\widetilde{V}_k$, neighborhood of $\rho_0$ in $[0,\epsilon_0]\times T^*\partial\Omega$, as in the definition of $k$-pre-propagation property which satisfy the conditions in the paragraph in front of Lemma \ref{geometric3chapter1}. Note in particular that we have $V_k\cap\mathcal{G}_j=\emptyset$ for all $j>k$. Without loss of generality, we may assume that $|s_0|<\min\{\sigma_{k-1},\sigma_k\}$ and $\gamma(s_0)\in \widetilde{V}_k$, since otherwise we can choose $s'_0<0, |s'_0|$ small enough and replace $\gamma(s_0)$ by $\gamma(s'_0)$.
	
	Let $\Gamma_0\subset\widetilde{V}_k$ be a neighborhood of $\gamma(s_0)$ so that $\Gamma_0\cap$ supp$(\mu)=\emptyset$. For $\delta_1>0$ small with $\delta_1\ll\sigma_k$, we set $\rho_1=\mathrm{exp}\left(-\frac{\sigma_k\delta_1}{2}H_{-r_0}\right)(\rho_0)$ and define
	$$ U_{\delta_1}:=\{\rho=(y,\eta,z,\zeta)\in \mathrm{Car}(P):0\leq y\leq \delta_1^2,|(z,\zeta)-\rho_1|\leq\delta_1^2\}.
	$$
	From continuous dependence of the generalized bicharacteristic flow, we have 
	$$ U_{\delta_1}\subset \gamma(s_0,\Gamma_0),\quad \textrm{provided that }\delta_1 \textrm{ small enough }.
	$$
	Now we claim that for possibly smaller $\delta_1>0$, we have 
	$$ \gamma(s_1,U_{\delta_1})\cap \bigcup_{j\geq k}\mathcal{G}^j=\emptyset,\quad \forall s_1\in(s_0,0).
	$$ 
	Indeed, it suffices to prove that $\gamma(s_1,U_{\delta_1})\cap\mathcal{G}^k=\emptyset$ since there are no point of $\mathcal{G}^j$ in $\widetilde{V}_k$ for $j>k$. Firstly, from the transversality of the flow $\mathrm{exp}(sH_{-r_0})$ and $\Sigma_k$, we deduce that at $\rho_1$, $z_1>\Theta_k(z',\zeta)$. By choosing $\delta_1$ smaller, there exists $\epsilon_1>0$, such that for all $\rho\in U_{\delta_1}$, 
	$ z_1> \Theta_k(z',\zeta)+\epsilon_1
	$
	holds. In particular, $U_{\delta_1}\subset\Sigma_k^+$.
	We calculate
	\begin{equation*}
	\begin{split}
	\frac{d}{ds}\Theta_k(z'(s),\zeta(s))=&\frac{\partial\Theta_k}{\partial z'}\frac{dz'}{ds}+\frac{\partial\Theta_k}{\partial\zeta}\frac{d\zeta}{ds}\\
	=&-\frac{\partial\Theta_k}{\partial z'}\frac{\partial r}{\partial \zeta'}+\frac{\partial\Theta_k}{\partial\zeta}\frac{\partial r}{\partial z}.
	\end{split}
	\end{equation*}
	Note that in $\widetilde{V_k}$, we can write $r=\zeta_1+yr_1(z,\zeta)+O(y^2)$, hence 
	$$ \frac{d}{ds}\Theta_k(z'(s),\zeta(s))=O(y(s)).
	$$
	
	Next we argue by contradiction, assume that for some $s_1\in(s_0,0)$ and $\rho\in U_{\delta_1}$ we have $\gamma(s_1,\rho)\in\mathcal{G}^k$ and $\gamma(s,\rho)\notin \mathcal{G}^k$ for any $0>s>s_1$. In this case we have $|y(s)|\leq C|s-s_1|$ for all $s\in[s_1,0]$. Therefore we must have
	$$\left|\frac{d}{ds}\Theta_k(z'(s),\zeta(s))\right|\leq C|s-s_1|.$$
	Combining with $\dot{z_1}\sim -1$, we have
	\begin{equation*}
	\begin{split}
	\Theta_k(z'(s_1),\zeta(s_1))\leq & \Theta_k(z'(0),\zeta(0))+C\int_{s_1}^0|s-s_1|ds\\
	< &z_1(0)+Cs_1^2\\
	=&z_1(s_1)+\int_{s_1}^0\frac{dz_1}{ds}ds+Cs_1^2\\
	\leq &z_1(s_1)-C_1|s_1|+Cs_1^2\\
	\leq &z_1(s_1),
	\end{split}
	\end{equation*}
	provided that $|s_0|$ is chosen to be small enough. This implies that $\gamma(s_1,\rho)\in\Sigma_k^+$, which is a contradiction.
	
	From $(k-1)$-propagation property, we know that $U_{\delta_1}\cap$ supp$(\mu)=\emptyset$. Therefore, applying $k$-pre-propagation property with respect to $\rho_1$ and $U_{\delta_1}$, we deduce that $\rho_0\notin$ supp$(\mu)$, and this completes the proof of Proposition \eqref{8.15}.
\end{proof}

%%%%%%%%%%%%%%%%%%%%%%%%%%%%%%%%%%%%%%%%%%%%%%%%%%%%%%%%%%%%%%%%%%

	\section*{A.Proof of Lemma \ref{hiddenregularitychapter1}}
	%%%%%%%%%%%%%%%%%%%%%%%%%%%%%%%%%%%%%%%%%%%
	\begin{proof}[Proof of Lemma \ref{hiddenregularitychapter1}]
		The first assertion follows from $h\mathrm{div }u=0$ and Dirichlet boundary condition, while we apply a multiplier method to prove the second. From the geometric assumption on $\Omega$, we can find a vector field $L\in C^1(\ov{\Omega})$ such that $L|_{\partial\Omega}=\nu$(see \cite{bookControl}, page 36). In global coordinate system, we write $ \displaystyle{L=L_j(x)\partial_{x_j}}.$ By using the equation, we have
		\begin{equation*}
		\begin{split}
		\int_{\Omega}Lu\cdot fdx=&\int_{\Omega} Lu\cdot (-h^2\Delta u-u+h\nabla q)dx,\\
		-\int_{\Omega}Lu\cdot udx
		=&-\int_{\Omega}L_j(x)\partial_{x_j} u^iu^idx\\
		=&-\int_{\Omega}
		\partial_{x_j}\left(L_j(x)u^i\right)u^idx+\int_{\Omega}\textrm{div }L(x)|u|^2dx\\
		=&\int_{\Omega}L_j(x)u^i(x)\partial_{x_j}u^idx+
		\int_{\Omega}\textrm{div }L(x)|u|^2dx\\
		=&\int_{\Omega}Lu\cdot udx+
		\int_{\Omega}\textrm{div }L(x)|u|^2dx,
		\end{split}
		\end{equation*}
		and thus
		$\displaystyle{\int_{\Omega} Lu\cdot udx=-\frac{1}{2}\int_{\Omega}
			\textrm{div }L(x)|u|^2dx=O(1).}$
		Next we calculate
		\begin{equation*}
		\begin{split}
		h\int_{\Omega}Lu\cdot \nabla qdx
		&=-h\int_{\Omega}u^i \partial_{x_j}\left(L_j\partial_{x_i} q\right)dx\\
		&=-h\int_{\Omega}u\cdot L(\nabla q)dx-h\int_{\Omega}
		(\textrm{div }L(x))u\cdot\nabla qdx\\
		&=-h\int_{\Omega}u\cdot[L,\nabla]qdx-h\int_{\Omega}\textrm{div }L(x)u\cdot\nabla qdx\\
		&=O(1),
		\end{split}
		\end{equation*}
		
		\begin{equation*}
		\begin{split}
		-h^2\int_{\Omega}Lu^i\Delta u^idx=&-h^2\int_{\partial\Omega} \left|\partial_{\nu}u^i\right|^2d\sigma +h^2\int_{\Omega}\nabla L(\nabla u^i,\nabla u^i)dx \\
		&+h^2\int_{\Omega}
		L_j(x)\partial^2_{x_jx_k} u^i\partial_{x_k} u^i\\
		=&-h^2\int_{\partial\Omega} \left|\partial_{\nu}u^i\right|^2d\sigma +h^2\int_{\Omega}\nabla L(x)(\nabla u^i,\nabla u^i)dx
		\\&+h^2\int_{\Omega}\partial_{x_j}\left(L_j\partial_{x_k}u^i\right)\partial_{x_k}u^idx
		-h^2\int_{\Omega}\textrm{div }L(x)\nabla u^i\cdot\nabla u^i(x)dx,
		\end{split}
		\end{equation*}
		\begin{equation*}
		\begin{split}	h^2\int_{\Omega}\partial_{x_j}\left(L_j\partial_{x_k}u^i\right)\partial_{x_k}u^idx&=
		h^2\int_{\partial\Omega}L\cdot\nu \left|\partial_{\nu} u^i\right|^2d\sigma -h^2\int_{\Omega}L_j(x)\partial_{x_k} u^i
		\partial^2_{x_jx_k} u^idx,
		\end{split}
		\end{equation*}
		$$-h^2\int_{\Omega}Lu^i\Delta u^idx
		=-\frac{h^2}{2}\int_{\partial\Omega}\left|\partial_{\nu}u^i\right|^2d\sigma+
		\int_{\Omega}\nabla L(x)(h\nabla u^i,h\nabla u^i)dx-
		\frac{h^2}{2}\int_{\Omega}\textrm{div }L(x)|\nabla u^i|^2dx.
		$$
		Observing that
		$ \int_{\Omega}Lu\cdot fdx=o(1),
		$
		we have
		$$ \int_{\partial\Omega}\left|h\partial_{\nu}u\right|^2d\sigma=O(1).
		$$
	\end{proof}
	
	%%%%%%%%%%%%%%%%%%%%%%%%%%%%%%%%%%%%%%%%%%%%%%%%%%%%%%%%%%%%%%%%%%%
	\section*{B.Standard elliptic theory}
	The differential operator is given by
	$$P_h=\mathrm{Op}_h(\eta^2+\lambda(y,x',\xi')^2-1+hm(y,x',\eta,\xi')),$$ where the principal symbol $p=\eta^2+\lambda^2-1$ is scalar while $m$ is matrix valued. When micro-locally near the region $p>0$, we want to construct the parametrix of the inverse of $P$. Denote by $U$ the turbulent neighborhood (two sided) of $\partial\Omega$. Take $\varphi\in C_c^{\infty}(U),\chi_0\in C^{\infty}(\mathbb{R}^{d-1})$ and the support of $\varphi$ is contained in a coordinate patch near the boundary. We put
	$$ E^0:=\mathrm{Op}_h\big(\frac{\chi_0(\xi')\varphi(y,x')}{\eta^2+\lambda(y,x',\xi')^2-1}\big),
	$$
	and we define matrix valued PdO $E^l$, $l\geq 1$ inductively via
	\begin{equation}\label{ellipticconstructionchapter1}
	\begin{split}
	& E^1\times p=-\sum_{|\alpha|=1}\frac{1}{i}\partial^{\alpha}_{\xi',\eta}E^0\times\partial_{x',y}^{\alpha}p-E^0\times m,\\
	&E^n\times p=-\sum_{|\alpha|+k=n,k\neq n}\frac{1}{i^{|\alpha|}}\partial^{\alpha}_{\xi',\eta}E^k\times\partial_{x',y}^{\alpha}p-\sum_{|\alpha|+k=n-1}\frac{1}{i^{|\alpha|}}\partial^{\alpha}_{\xi',\eta}E^k\times\partial_{x',y}^{\alpha}m.
	\end{split}
	\end{equation}
	For any $N\in\mathbb{N}$, we set
	$$ E_N=\sum_{j=0}^Nh^jE^j,
	$$
	and then
	$$ E_N\circ P=\chi_0(hD_{x'})\varphi(y,x')\textrm{Id}+R_N,\quad \|R_N\|_{L^2\rightarrow L^2}=O(h^{N+1}).
	$$
	\begin{proposition}\label{frequencyconcentrate chapter1}
		The sequence of solutions $(u_k)$ is $h_k$-oscillating in the following sense:
		$$ \lim_{R\rightarrow\infty}\limsup_{k\rightarrow\infty}
		\int_{|\xi|\geq Rh_k^{-1}}
		|\widehat{\varphi u_k}(\xi)|^2d\xi=0\quad \forall \psi\in C_c^{\infty}(\Omega),
		$$
		$$ \lim_{R\rightarrow\infty}\limsup_{k\rightarrow\infty}
		\int_0^{\epsilon_0}dy\int_{|\xi'|\geq Rh_k^{-1}}
		|\widehat{\varphi u_k}(y,\xi')|^2d\xi'=0,\quad \forall \psi\in C_c^{\infty}(\ov{\Omega}),
		$$
		where in the second formula, the support of $\varphi$ is contained in some local coordinate patch near the boundary, and the Fourier transform is only taken for the $x'$ direction.
	\end{proposition}
	\begin{proof}
		We drop the subindex $k$ in the proof.	For the first formula, one can use the equation of $u$ to obtain
		$$ (-h^2\Delta-1)(\varphi u)=g=O_{L^2}(1),
		$$
		and 
		$$\int_{|\xi|\geq Rh^{-1}}|\widehat{\varphi u}(\xi)|^2d\xi
		\leq \int_{h|\xi|\geq R}
		\frac{|\widehat{g}(\xi)|^2}{|h^2|\xi|^2-1|^2}d\xi\leq \frac{C}{(R^2-1)^2}. 
		$$
		
		For the second formula, it will be sufficient to show that
		$$ \lim_{R\rightarrow\infty}
		\limsup_{k\rightarrow\infty}\big\|\big(1-\chi\big(\frac{hD_{x'}}{R}\big)\big)(\varphi u)\big\|_{L^2}=0
		$$
		for some $\chi\in C_c^{\infty}(-1,1)$. We write $w=u\mathbf{1}_{y\geq 0},\tilde{g}=g\mathbf{1}_{y\geq 0},
		v=h\partial_y u|_{y=0}=O_{L^2(y=0)}(1)$. We apply the parametrix construction above with $\chi_0(\xi')=1-\chi\big(\frac{\xi'}{R}\big)$. Let $e_N(y,x',\eta,\xi')$ be the symbol of the operator $E_N$, which is meromorphic in $\eta$ with poles $\eta_0^{\pm}=\pm i\sqrt{\lambda^2(y,x',\xi')^2-1}$. 
		Moreover,
		\begin{equation}\label{symbolboundchapter1}
		|\partial^{\alpha}e_N(y,x',\eta,\xi')|\leq \frac{C_{N,\alpha}}{R}.
		\end{equation}
		We take $\widetilde{\varphi}$ to be a slight enlargement of $\varphi$ such that $\widetilde{\varphi}\varphi=\varphi$. Then
		$$ E_N\circ\widetilde{\varphi}Pw=\big(1-\chi\big(\frac{hD_{x'}}{R}\big)\big)(\varphi w)+R_Nw.
		$$ 
		From the jump formula, 
		$ \displaystyle{Pw=\widetilde{g}+hv(x')\otimes \delta_{y=0}.}
		$ We have
		$$ \big(1-\chi\big(\frac{hD_{x'}}{R}\big)\big)(\varphi w)=E_N\big(\widetilde{\varphi}\widetilde{g}+\widetilde{\varphi}hv\otimes\delta_{y=0}\big)+O_{L_{y,x'}^2}(h^{N+1}).
		$$
		From symbolic calculus, 
		$$ \|E_N(\widetilde{\varphi}\widetilde{g})\|_{L_{y,x'}^2}\leq \sum_{|\alpha|\leq Cd}\sup_{(y,x',\eta,\xi')\in\mathbb{R}^{2d}}|\partial^{\alpha}e_N(y,x',\eta,\xi')|+Ch,
		$$
		and it vanishes after the taking the limit $h\rightarrow 0$ and then $R\rightarrow\infty$, thanks to \eqref{symbolboundchapter1}. We next write
		\begin{equation*}
		\begin{split}
		E_N(\widetilde{\varphi}hv\otimes\delta_{y=0})=&\frac{\pi}{(2\pi h)^{d-1}}\int_{\mathbb{R}^{2(d-1)}}e^{\frac{i(x'-z')\cdot\xi'}{h}}\omega_N(y,x',\xi')\widetilde{\varphi}(0,z')v(z')dz'd\xi',
		\end{split}
		\end{equation*}
		with
		$$ \omega_N(y,x',\xi')=\int_{-\infty}^{\infty}e_N(y,x',\eta,\xi')e^{\frac{iy\eta}{h}}d\eta.
		$$
		To calculate $\omega_N$ for $y>0$, we deform the contour of integral in $\eta$ in the half plane $\Im\eta>0$. From the Residue formula, we have
		$$ \omega_N(y,x',\xi')=2\pi i\textrm{Res}(e_N(y,x',\eta,\xi');i\eta_0^{+})e^{\frac{iy\eta_0^{+}}{h}}.
		$$ 
		The principal symbol of $\omega_N$ is given by
		$$ \pi\exp\big(-\frac{yQ(y,x',\xi')}{h}\big)\frac{\varphi(y,x')\big(1-\chi\big(\frac{\xi'}{R}\big)\big)}{2Q(y,x',\xi')},\quad Q(y,x',\xi')=\sqrt{\lambda(y,x',\xi')^2-1}.
		$$
		Therefore
		\begin{equation*}
		\begin{split}
		&\limsup_{h\rightarrow 0}\|E_N(\widetilde{\varphi}2hv\otimes\delta_{y=0})\|_{L_{y,x'}^2}\\ \leq 
		&C_{N,d}\int_0^{\infty}\sum_{|\alpha|\leq Cd}\sup_{(x',\xi')}|\partial^{\alpha}_{x',\xi'}\omega_N(y,x',\xi')|\|v\|_{L_{x'}^2}dy
		\leq \frac{C}{R},
		\end{split}
		\end{equation*} 
		where we have used the point-wise estimate
		$$ |\partial_{x',\xi'}^{\alpha}\omega_N(y,x',\xi')|\leq \frac{C_{\alpha}e^{-\frac{y\sqrt{R^2-1}}{2Rh}}}{\sqrt{R^2-1}}\big(1+\big(\frac{y}{h}\big)^{|\alpha|}\big).
		$$
	\end{proof}
	Given $\chi(y,x',\xi')\in C_c^{\infty}([0,\epsilon_0)\times\mathbb{R}^{2d-2})$, the following proposition can be deduced in the same manner.
	\begin{proposition}\label{etaellipticchapter1}
		Let $w_k=\chi(y,x',hD_{x'})(\varphi u_k),\underline{w_k}=\mathbf{1}_{y\geq 0}u_k$. Then for $\chi_1\in C_c^{\infty}(\mathbb{R}), 0\leq\chi_1\leq 1, \chi_1=1$ near $0$, we have
		$$\lim_{R\rightarrow\infty}\limsup_{k\rightarrow\infty}\big\|\big(1-\chi_1\big(\frac{h_kD_y}{R}\big)\big)\underline{w_k}\big\|_{L^2(\mathbb{R}^d)}=0.
		$$ 
	\end{proposition}
	\begin{proof}
		We have $\displaystyle{P\underline{w}=\underline{g}+hv(x')\otimes\delta_{y=0}}$
		with $\|\underline{g}\|_{L_{y,x'}^2}=O(1),\|v\|_{L_{x'}^2}=O(1).$ Note that the functions $\underline{g},\underline{v}$ here may not coincide with the functions in the proof of Proposition \ref{frequencyconcentratechapter1}. We define 
		$$ E_N=\sum_{j=0}^Nh^jE^j+R_N,\quad \|R_N\|_{L^2\rightarrow L^2}=O(h^{N+1}),
		$$
		with $\displaystyle{E^0=\mathrm{Op}_h\big(\frac{\widetilde{\varphi}(y,x')\big(1-\chi_1\big(\frac{\eta}{R}\big)\big)}{\eta^2+\lambda(y,x',\xi')^2-1}\big)}\textrm{Id}$ and $E^l, l\geq 1$ as in \eqref{ellipticconstructionchapter1}. This implies that
		$$\big(1-\chi_1\big(\frac{hD_y}{R}\big)\big)(\varphi \underline{w})=E_N(\widetilde{\varphi}\underline{g}+2h\widetilde{\varphi}v\otimes\delta_{y=0})-R_N\underline{w}.
		$$
		Consequently, we have 
		$$ \lim_{R\rightarrow\infty}\limsup_{h\rightarrow 0}\|E_N(\widetilde{\varphi}\underline{g})\|_{L_{y,x'}^2}=0.
		$$
		\begin{equation*}
		\begin{split}
		&E_N(h\widetilde{\varphi}v\otimes\delta_{y=0})(y,x')\\
		=&\frac{\pi}{(2\pi h)^{d-1}}\int_{\mathbb{R}^{2(d-1)}}e^{\frac{i(x'-z')\xi'}{h}}a(y,x',\xi')\widetilde{\varphi}(0,z')v(z')dz'd\xi',\\
		&\textrm{ with } a(y,x',\xi')=\int_{-\infty}^{\infty}e^{\frac{iy\eta}{h}}e_N(y,x',\eta,\xi')d\eta.
		\end{split}
		\end{equation*}
		Observe that
		$$ \sup_{(x',\xi')}|\partial_{x',\xi'}^{\alpha}e_N(y,x',\eta,\xi')|\leq \frac{C_{\alpha}\left(1-\chi_1\left(\frac{\eta}{R}\right)\right)}{(1+\eta^2)(1+y^2)},
		$$
		and this implies that 
		$$ \lim_{R\rightarrow\infty}\limsup_{h\rightarrow 0}\|E_N(h\widetilde{\varphi}v\otimes\delta_{y=0})\|_{L_{y,x'}^2}=0.
		$$
	\end{proof}

	%%%%%%%%%%%%%%%%%%%%%%%%%%%%%%%%%%%%%%%%%%%%%%%%%%%%%%%%%%%%%%%%%%%%%%%%%%%%%%%%%%%%%%%%%%%
	\section*{C.Proof of technical results in section 3}
	
	\begin{proof}[Proof of Lemma \ref{exponentialsymbolchapter1}]
		The proof can be reduced to the point-wise estimate of the solution $F(y)$ of the ODE:
		$$ -h^2\frac{d^2F}{dy^2}+\lambda(y)^2F(y)=G(y),\quad F(0)=F'(0)=0,
		$$
		with $0<c_1\leq \lambda(y)^2\leq c_2,$ $G\in C^{\infty}([0,\infty)),$ and $ |G(y)|\leq Ce^{-\frac{cy}{h}}$ for all $y\geq 0$. By rescaling $z=\frac{y}{h}$,  it reduces to prove the exponential decay of the solution $W$ of the ODE:
		$$ -\frac{d^2W}{dz^2}+V(z)W(z)=g(z),\quad  W(0)=W'(0)=0,
		$$
		with $0<c_1\leq V(z)\leq c_2$, $g\in C^{\infty}([0,\infty))$ and $|g(z)|\leq Ce^{-cz}$ for all $z\geq 0$. 
		
		For this, we first notice that $W$ is smooth and in $H^{s}(\mathbb{R}_+)$ for all $s\geq 0$. To prove the exponential decay, we pick $\theta_{\epsilon}(z)=e^{\frac{2\delta_0z}{1+\epsilon z}}$ with $\delta_0>0$ to be chosen later. One observe easily that $0<\theta_{\epsilon}'(z)\leq  2\delta_0\theta_{\epsilon}(z)$ for $z\geq 0$. We multiply by $\theta_{\epsilon}W$ to the both sides of the equation and integrate it, then
		$$ \int_0^{\infty}\left((\theta_{\epsilon}W)'W'+\theta_{\epsilon}VW^2\right)dz=\int_0^{\infty}\theta_{\epsilon}WGdz.
		$$
		Notice that $\theta_{\epsilon}'W'W\geq -2\delta_0\theta_{\epsilon}|W||W'|\geq -\delta_0\theta_{\epsilon}(|W|^2+|W'|^2)$, by choosing $$\delta_0<\min\big\{\frac{c_1}{4},\frac{1}{4},\frac{c}{2}\big\},$$
		we have that
		$$ \int_0^{\infty}\theta_{\epsilon}\left(|W'|^2+|W|^2\right)dz\leq 2\|Ge^{2\delta_0z}\|_{L^2(\mathbb{R}_+)}\|W\|_{L^2(\mathbb{R}_+)},
		$$
		thanks to Cauchy-Schwartz and the fact that $\theta_{\epsilon}\leq e^{2\delta_0z}$, uniformly in $\epsilon$. From the dominating convergence theorem, we have 
		$We^{\delta_0z}\in L^2(\mathbb{R}_+)$ and $W'e^{\delta_0z}\in L^2(\mathbb{R}_+)$. Finally, by elliptic regularity, we have that $We^{\delta_0z}\in L^{\infty}(\mathbb{R}_+)$,$W'e^{\delta_0z}\in L^{\infty}(\mathbb{R}_+)$.
	\end{proof}

	\begin{proof}[Proof of Proposition \ref{parametrixpressurechapter1}]
		We choose $\varphi_2\in C_c^{\infty}(Y_+)$ such that
		$$ \varphi_1|_{\textrm{supp}(\varphi_2)}=1,\varphi_2|_{\textrm{supp}(\varphi)}=1.
		$$
		We first claim that 
		\begin{equation}\label{bdchapter1}
		\varphi_2\mathrm{Op}_h(\chi_{\delta_0}A_j)(\varphi_{1,0}q_0)=O_{L^2(\mathbb{R}_+^{d})}(1).
		\end{equation}
		Indeed, we can write
		$$ \varphi_2\mathrm{Op}_h(A_j\chi_{\delta_0})(\varphi_{1,0}q_0)=\varphi_2\mathrm{Op}_h(A_j)\varphi_1\varphi\mathrm{Op}_h(\chi_{\delta_0})(\varphi_{1,0}q_0)+h\varphi_2\mathrm{Op}_h(B_j\widetilde{\chi}_{\delta_0})(\varphi_{1,0}q_0)
		$$
		with $B_j\in \mathcal{E}_{\partial}^{-j}$ and $\widetilde{\chi}_{\delta_0}$ has similar support property as $\chi_{\delta_0}$. Thus from symbolic calculus, we have for each fixed $y>0$, 
		$$ \|\varphi_2\mathrm{Op}_h(A_j(y)\chi_{\delta_0}(y))(\varphi_{1,0}q_0)\|_{L^2(\mathbb{R}^{d-1})}^2\leq C_je^{-\frac{c_jy}{h}}h^{-1}\big(1+\big(\frac{y}{h}\big)^{n_j}\big),
		$$
		thanks to Lemma \ref{pressure L^2 boundchapter1}. Integrating over $y> 0$ yields \eqref{bdchapter1}.

		By taking $\mathrm{supp}(\chi_{\delta_0})$ small such that $\varphi_2\chi_{\delta_0}=\chi_{\delta_0}$, we have that 
		$$ \varphi_2\left(\mathrm{Op}_h(\chi_{\delta_0})(\varphi_1q)-\mathrm{Op}_h(\chi_{\delta_0}A)(\varphi_{1,0}q_0)\right)=w+O_{L^2(\mathbb{R}_+^d)}(h)
		$$
		with $$w=\varphi_2\left(\mathrm{Op}_h(\chi_{\delta_0})(\varphi_1q)-\mathrm{Op}_h(\chi_{\delta_0})\varphi_1\varphi_2\mathrm{Op}_h(A)(\varphi_{1,0}q_0)\right).$$ From Lemma \ref{pressure L^2 boundchapter1}, we have $w=O_{L^2(\mathbb{R}^{d}_+)}(1)$ and $hD_yw=O_{L^2(\mathbb{R}_+^d)}(1)$. The trace of $w$ satisfies
		$$ w|_{y=0}=\varphi_{2,0}\mathrm{Op}_h(\chi_{\delta_0}(1-\psi_{\delta_0}(\lambda_0)))(\varphi_{1,0}q_0)=O_{H^{\infty}(\mathbb{R}^{d-1})}(h^{\infty}),
		$$ 
		and $w$ satisfies the equation (we use $\varphi_2=\varphi_1\varphi_2$ here)
		\begin{equation}\label{approximateequationchapter1} P_0w=\varphi_1\left[P_0,\varphi_2\mathrm{Op}_h(\chi_{\delta_0})\right]\left(\varphi_1 q-\varphi_2\mathrm{Op}_h(A)(\varphi_{1,0}q_0)\right)+O_{H^{\infty}(\mathbb{R}^d)}(h^{\infty}).
		\end{equation}
		Notice that $\varphi_1q-\varphi_2\mathrm{Op}_h(A)(\varphi_{1,0}q_0)=O_{L^2(\mathbb{R}_+^d)}(1)$, micro-locally for $\lambda\geq \frac{\delta_0}{2}$, the right hand side of \eqref{approximateequationchapter1} is equal to $O_{L^2(\mathbb{R}_+^d)}(h)$ as well as $O_{H^1(\mathbb{R}_+^d)}(1)$. Multiply by $w=\varphi_1w$ to the both sides of \eqref{approximateequationchapter1} and integrate it, we have
		$$\int_0^{\infty}\int_{\mathbb{R}^{d-1}}w(y,x')P_0wdx'dy=O(h).
		$$
		Doing integration by part for the left hand side, we have
		\begin{equation*}
		\begin{split}
		\int_0^{\infty}\int_{\mathbb{R}^{d-1}}wP_0wdx'dy=&-\int_0^{\infty}\int_{\mathbb{R}^{d-1}}|h\partial_yw|^2dx'dy-\int_{\mathbb{R}^{d-1}}h^2(w\partial_yw)|_{y=0}dx'\\
		-&\int_0^{\infty}\int_{\mathbb{R}^{d-1}}\sum_{1\leq j,k\leq d-1}g^{jk}h\partial_jwh\partial_kwdx'dy+O(h).
		\end{split}
		\end{equation*}
		This implies that
		$$ \|hD_{y,x'}w\|_{L^2(\mathbb{R}_+^d)}+\|w\|_{L^2(\mathbb{R}_+^d)}=O(h^{1/2}).
		$$
		Using this smallness and redoing the integration by part, we can improve each bound in the procedure above and obtain that
		$$ \|hD_{y,x'}w\|_{L^2(\mathbb{R}_+^d)}+\|w\|_{L^2(\mathbb{R}_+^d)}=O(h^{3/4}).
		$$
		To conclude, we observe that
		\begin{equation}\label{calcullocalchapter1}
		\begin{split}
		\varphi h\partial_yw=&\varphi h\partial_y\mathrm{Op}_h(\chi_{\delta_0})(\varphi_1q)-\varphi h\partial_y\mathrm{Op}_h(\chi_{\delta_0})\varphi_2\mathrm{Op}_h(A)(\varphi_{1,0}q_0)\\
		=&\varphi\mathrm{Op}_h(\chi_{\delta_0})h\partial_y(\varphi_1 q)-\varphi\mathrm{Op}_h(\chi_{\delta_0})\varphi_2\mathrm{Op}_h(h\partial_y A)(\varphi_{1,0}q_0)\\
		+&O_{L^2(\mathbb{R}_{+}^d)}(h)\\
		=&\varphi\mathrm{Op}_h(\chi_{\delta_0})h\partial_y(\varphi_1 q)-\varphi\mathrm{Op}_h(\chi_{\delta_0}\lambda A)(\varphi_{1,0}q_0)\\
		+&O_{L^2(\mathbb{R}_+^d)}(h),
		\end{split}
		\end{equation}
		where we have used symbolic calculus and Lemma \ref{pressure L^2 boundchapter1} several times. Plugging into \eqref{approximateequationchapter1}, we have that $P_0w=O_{L^2(\mathbb{R}_+^d)}(h)$, $w|_{y=0}=O_{H^{\infty}(\mathbb{R}^{d-1})}(h^{\infty})$. We decompose $w=w_1+w_2$ with $P_0w_1=P_0w,w_1|_{y=0}=0$ and $P_0w_2=0, w_2|_{y=0}=w|_{y=0}$. From elliptic regularity of boundary value problem, we have $h^2w_1=O_{H^2(\mathbb{R}_+^d)}(h)$ and $h^2w_2=O_{H^2(\mathbb{R}_+^d)}(h^{\infty})$ and thus $h\partial_yw=O_{H^1(\mathbb{R}_+^d)}(1)$. From interpolation, we deduce that $h\partial_yw=O_{H^{\frac{2}{3}}(\mathbb{R}_+^d)}(h^{\frac{1}{4}})$. Observe that the error terms on the right hand side of \eqref{calcullocalchapter1} can be also bounded by $O_{H^1(\mathbb{R}_+^d)}(1)$ and thus $O_{H^{\frac{2}{3}}(\mathbb{R}_+^d)}(h^{\frac{1}{4}})$ by interpolation. This completes the proof.

	\end{proof}

	%%%%%%%%%%%%%%%%%%%%%%%%%%%%%%%%%%%%%%%%%%%%%%%%%%%%%%%%%%%%%%%%%%%%%%%%%%%%%%%%%%
	\section*{D.Construction of test functions}
	We first give the detailed construction of $a=a_0+a_1\eta$ used in the first step of the proof of Proposition\ref{propagation of supportchapter1}, which follows closely to \cite{bookHormander(III)}.
	\begin{proof}[Proof of Lemma \ref{test1chapter1} ]
		Given $\chi_1\in C_c^{\infty}(-2,2)$ with $\chi_1|_{(-1,1)}=1$ and
		$\chi_2\in C_c^{\infty}(-3,3)$ such that $\chi_2|_{(-2,2)}=1$. Consider the smooth functiiton $\chi_0(t)=e^{-1/t}\mathbf{1}_{t>0}$.  We work in the local coordinate $(y,x,\xi)$, and assume that $(0,x_0,\xi_0)\in \mathcal{G}^{2,+}$ with $|\xi_0|\sim 1$. Set $\phi=\phi_0+\phi_1\eta$ with
		$$ \phi_1(y,x,\xi)=\frac{1}{|\xi|}, \phi_0(y,x,\xi)=y^2+|x-x_0|^2+|\xi-\xi_0|^2.
		$$
		We calculate
		$$ H_p\phi=\eta\left(2\partial_y\phi_0-\{r,\phi_1\}\right)+\phi_1\partial_yr-\{r,\phi_0\}\geq 2c>0,
		$$
		provided that $|\eta|\leq c_0$ for some $c_0>0$ and $W_0$ is chosen small enough such that $\frac{\partial r}{\partial y}\geq 4c$ on it. The positivity then follows from the direct calculation:
		\begin{equation*}
		\begin{split}
		\{r,\phi_0\}=&2\partial_{\xi} r\cdot(x-x_0)-2\partial_xr\cdot(\xi-\xi_0),\\
		\partial_y\phi_0=&2y,\{r,\phi_1\}=\partial_xr\cdot\frac{\xi}{|\xi|^3}.
		\end{split}
		\end{equation*}

		We next take
		$$ f(y,x,\eta,\xi):=\chi_2\big(\frac{\phi_0}{\delta}\big)^2\chi_0\big(1-\frac{\phi}{\delta}\big).
		$$
		The desired functions $a_0,a_1$ are chosen to be the remainders when $f$ is divided by $p=\eta^2-r(y,x,\xi)$, thanks to the Malgrange preparation theorem:
		$$ f(y,x,\eta,\xi)=(\eta^2-r(y,x,\xi))g(y,x,\eta,\xi)+a_1(y,x,\xi)\eta+a_0(y,x,\xi).
		$$
		On the support of $f$, we observe that
		$$ \phi_0(y,x,\xi)=|(y,x,\xi)-(0,x_0,\xi_0)|^2\leq 3\delta, \quad \frac{\eta}{|\xi|}+\phi_0\leq \delta,
		$$
		which implies $\eta\leq \delta|\xi|$. Moreover, on supp$(f)\cap \text{supp}(\partial\chi_2(\delta^{-1}\phi_0))$, we have $\phi_0\geq 2\delta, \phi_0+\phi_1\eta\leq\delta$, and these imply $\eta\leq -\delta|\xi|$, hence $r(y,x,\xi)=\eta^2\geq \delta^2|\xi|^2$, when $p=\eta^2-r=0$.
		
		Direct calculation yields
		$$ H_pf+fM|\xi|+\psi^2=\chi_0\big(1-\frac{\phi}{\delta}\big)H_p\big(\chi_2\big(\frac{\phi_0}{\delta}\big)^2\big)
		-\big(1-\chi_1\big(\frac{\eta}{\delta|\xi|}\big)^2\big)N,
		$$
		with
		$$ N=\chi_2\big(\frac{\phi_0}{\delta}\big)^2\big(\chi_0'\big(1-\frac{\phi}{\delta}\big)
		\frac{H_p\phi}{\delta}-\chi_0\big(1-\frac{\phi}{\delta}\big)M|\xi|\big)\in C^{\infty},\;
		\psi=\chi_1\big(\frac{\eta}{\delta|\xi|}\big)N^{1/2}.
		$$
		Here $N\geq 0$ on supp$(\psi)$ if we choose $\delta>0$ small enough. Observe that when $\eta=r^{1/2}\geq 0$, we have $\chi_0(1-\delta^{-1}\phi)H_p\big(\chi_2\left(\delta^{-1}\phi_0\right)^2\big)=0, \big(1-\chi_1\big(\frac{\eta}{\delta|\xi|}\big)^2\big)N=0$. We then define a function
		$$ \varphi(y,x,\xi)=-\frac{\chi_0\big(1-\frac{\phi}{\delta}\big)H_p\big(\chi_2\big(\frac{\phi_0}{\delta}\big)^2\big)
			-\big(1-\chi_1\big(\frac{\eta}{\delta|\xi|}\big)^2\big)N}{2r^{1/2}}|_{\eta=-r^{1/2}}\mathbf{1}_{r(y,x,\xi)>0}
		$$
		and then
		$$ H_pf+fM|\xi|+\psi^2=\varphi(\eta-r^{1/2}), \quad \textrm{when }p=\eta^2-r=0.
		$$
		Therefore, on $p=0$, we have
		$$ H_pa+aM|\xi|+\psi^2=\varphi(\eta-r^{1/2}).
		$$
		It is left to check the smoothness of functions $\varphi, \psi$ and $\rho$. Indeed, on the support of $\psi$,$|\phi_1\eta|\leq 2\delta, \phi_0\leq 3\delta$, and then $ 1-\frac{\phi}{\delta}\leq 3$. Notice that $\frac{\chi_0(t)}{\chi_0'(t)}=t^2$, we have
		$$ N^{1/2}=\chi_2\big(\frac{\phi_0}{\delta}\big)\sqrt{\chi_0'\big(1-\frac{\phi}{\delta}\big)\frac{H_p\phi}{\delta}}
		G\big(\frac{M|\xi|\delta}{H_p\phi},1-\frac{\phi}{\delta}\big)\in C^{\infty},
		$$
		since the function $G(a,t)=\sqrt{1-at^2}\in C^{\infty}$ for $t\leq 3, |a|\ll 1$. This implies that $\psi\in C^{\infty}$, provided that $\delta$ is chosen small enough.
		
		For $\varphi$, the smoothness comes from the fact that on the support of  $$\chi_0\big(1-\frac{\phi}{\delta}\big)H_p\big(\chi_2\big(\frac{\phi_0}{\delta}\big)^2\big)|_{\eta^2=r}
		-\big(1-\chi_1\big(\frac{\eta}{\delta|\xi|}\big)^2\big)N|_{\eta^2=r} ,$$ we have $r\geq \delta^2|\xi|^2$. Moreover, $\varphi$ has compact support.
		
		Finally, from the definition of $a$, we have
		$$ a_1(y,x,\xi)=\frac{f(y,x,\eta,\xi)-f(y,x,-\eta,\xi)}{2\eta}|_{\eta=\sqrt{r(y,x,\xi)}}\in C_c^{\infty},
		$$
		$$a_0(y,x,\xi)=\frac{f(y,x,\eta,\xi)+f(y,x,-\eta,\xi)}{2}|_{\eta=\pm\sqrt{r(y,x,\xi)}}\in C_c^{\infty}.
		$$
		we deduce that $\frac{\partial f}{\partial\eta}=-\frac{\phi_1}{\delta}\chi_2\big(\frac{\phi_0}{\delta}\big)^2\chi_0'\big(
		1-\frac{\phi}{\delta}\big)<0$, hence
		\begin{equation*}
		\begin{split}
		\frac{f(y,x,\eta,\xi)-f(y,x,-\eta,\xi)}{2\eta}=&\frac{1}{2}\int_{-1}^{1}\frac{\partial f}{\partial \eta}(y,x,s\eta,\xi)ds<0.
		\end{split}
		\end{equation*}
		Define
		$$ t(y,x,\xi)=\big(-\frac{1}{2}\int_{-1}^{1}\frac{\partial f}{\partial \eta}(y,x,s\eta,\xi)ds\big|_{\eta=\sqrt{r(y,x,\xi)}}\big)^{1/2},
		$$
		one can show that $t$ is a smooth function with compact support.
		
		The last observation is that $a=f>0$ on $p=0$, hence $s=f^{1/2}|_{p=0}\in C_c^{\infty}$.
		
		We give some more calculations: Let $\psi_0=\psi|_{y=0}, \psi_1=\frac{\partial\psi}{\partial\eta}|_{y=0}$, when $\eta=r=0$. Thus at $(x_0,\xi_0)$,
		\begin{equation*}
		\begin{split}
		t(x_0,\xi_0)=&\sqrt{\frac{\chi_0'(1)\phi_1(x_0,\xi_0)}{\delta}},\\
		\psi_0(x_0,\xi_0)^2=&\chi_0'(1)\frac{\partial r}{\partial y}(0,x_0,\xi_0)\frac{\phi_1(0,x_0,\xi_0)}{\delta}-\chi_0(1)M|\xi|>0,\\
		2\psi_0\psi_1(x_0,\xi_0)=&-\chi_0''(1)\frac{\partial r}{\partial y}(x_0,\xi_0)\frac{\phi_1(x_0,\xi_0)^2}{\delta^2}-\chi_0'(1)\{r,\phi_1\}(0,x_0,\xi_0)\frac{1}{\delta}\\+&\frac{\chi_0'(1)M\phi_1(0,x_0,\xi_0)}{\delta}>0,\textrm{for $\delta$ small enough since $\chi_0''(1)<0 $.}
		\end{split}
		\end{equation*} 
		Observe that near $(x_0,\xi_0)$, we have $\frac{\psi_1}{\psi_0}\sim -\frac{\phi_1(x_0,\xi_0)\chi_0''(1)}{2\chi_0'(1)\delta}$, provided that $\delta$ is small enough. Now if we make a different choice of $\widetilde{\delta}>0$, the difference between two ratios $\frac{\psi_1}{\psi_0}$ and $\frac{\widetilde{\psi}_1}{\widetilde{\psi}_0}$ is non-zero. This implies that we can choose a further cut-off $\chi$ near $(0,x_0,\xi_0)$ such that $\|\varphi\textrm{Op}_h(\chi)\varphi_1u\|_{L^2(Y_+)}=o(1)$ and $\|\varphi\textrm{Op}_h(\chi)hD_y\varphi_1u\|_{L^2(Y_+)}=o(1)$ from $\|\varphi\textrm{Op}_h(\psi_0+\psi_1\eta)\varphi_1u\|_{L^2(Y_+)}=o(1)$ and $\|\varphi\textrm{Op}_h(\widetilde{\psi}_0+\widetilde{\psi}_1\eta)\varphi_1u\|_{L^2(Y_+)}=o(1)$.
	\end{proof}
	Next we recall the proof of Lemma \ref{MS1chapter1}, which is essentially given in \cite{Melrose-SjostrandI}.
	
	\begin{proof}[Proof of Lemma \ref{MS1chapter1}]:
		From the transversality, we can choose a new coordinate $(s,t)$ in $U$ such that $\rho_0=(0,0)$ and $H_{-r_0}=\partial_t$ in this coordinate.\\
		Step 1. Consider the function
		$ \displaystyle{\chi(u)=e^{\frac{1}{u-3/4}}\mathbf{1}_{u<3/4}.}
		$
		It is easy to check that $\chi$ is smooth and non-increasing with the property:
		$$ \partial^{N}\chi(u)=O((-\chi')^{1/{m}}),\quad \forall N\in\mathbb{N},m>1,\textrm{locally uniformly.}
		$$
		Step 2. Next we choose $\beta\in C^{\infty}(\mathbb{R})$ such that $\beta\geq 0$ vanishing on $(-\infty,-1)$ and strictly increasing on $(-1,-\frac{1}{2}),$ equaling to $1$ on $(-\frac{1}{2},\infty)$. We modify $\beta$ such that
		$$ \partial^{N}\beta=O(\beta^{1/m}),\quad \forall N\in\mathbb{N},m>1,\textrm{locally uniformly.}
		$$
		Step 3. Choose $f\in C^{\infty}(\mathbb{R})$ so that $f$ vanishes on $(-\infty,1/2)$ and is strictly increasing and convex on $(1/2,\infty)$ with $f(1)>1$.
		
		Now we set
		$$ a_{\delta}=\beta\big(\frac{3t}{4\delta^2}\big)\chi\big(\frac{t}{\sigma\delta}+\frac{|s|^2}{\delta^4}+f\big(\frac{y}{\delta^2}\big)\big),
		$$
		and 
		$$ g_{\delta}=-\beta\big(\frac{3t}{4\delta^2}\big)H_p(\chi(u))$$
		with $\displaystyle{u=\frac{3t}{4\sigma\delta}+\frac{|s|^2}{\delta^4}+f\big(\frac{y}{\delta^2}\big)}.$
		Finally we define 
		$ h_{\delta}=-H_pa_{\delta}-g_{\delta}.
		$
		Note that
		$$ \textrm{supp }(a_{\delta})=\big\{(y,s,t):-\delta^2\leq t,\frac{t}{\sigma\delta}+\frac{|s|^2}{\delta^4}+f\big(\frac{y}{\delta^2}\big)\leq \frac{3}{4}\big\},
		$$
		hence it is clear that (1)(2)(3)(4) in Lemma \ref{MS1chapter1} are satisfied.
		
		Since $r=r_0+O(y)$,
		when $H_{-r}$ acts on functions independent of $\eta$ we have
		$$ H_p=\partial_t+O(y)\partial_s+O(y)\partial_t+O(\delta)\partial_y,
		$$ due to the bound
	 $|\eta|=O(\delta)$. Therefore, we have
		\begin{equation*}
		\begin{split}
		-g_{\delta}&=\beta\big(\frac{t}{\delta^2}\big)\big(\big(\frac{1}{\delta\sigma}+O(\delta^2)\frac{|s|}{\delta^4}+O(\delta^2)\frac{1}{\delta\sigma}+O(\delta)\frac{1}{\delta^2}\big)\chi'(u)+O(1)\chi(u)\big)\\
		&=\beta\big(\frac{t}{\delta^2}\big)
		\big(\frac{3}{4\delta\sigma}+O\big(\frac{\delta}{\sigma}\big)+O\big(\frac{1}{\delta}\big)+O(1)))\chi'(u)\big)\\
		&\sim \beta\big(\frac{t}{\delta^2}\big)\frac{1}{\delta\sigma}\chi'(u),
		\end{split}
		\end{equation*}
		provided that $\delta,\sigma\ll 1$. In the calculation above, we have used the fact that $\chi(u)=O(\chi'(u))$ on the support of $g_{\delta}$. Thus (5) in Lemma \ref{MS1chapter1} follows. 
		
		(6) follows from the construction of $\chi$. 
		
		To check (7), we observe that
		supp$(g_{\delta})\cup $supp$(h_{\delta})\subset $supp$(a_{\delta})$. Moreover, from the construction, $g_{\delta},h_{\delta}$ are independent of $\eta$ whenever $0\leq y<\frac{\delta^2}{2}$. Finally, to check the support of $h_{\delta}$, we write
		$ h_{\delta}=-H_{p}\big(\beta\big(\frac{t}{\delta^2}\big)\big)\chi(u)$.  Since $\beta$ is independent of $y,\eta$, we have
		$ H_{p}\left(\beta\left(\frac{t}{\delta^2}\right)\right)=H_{-r}\left(\beta\left(\frac{t}{\delta^2}\right)\right),
		$ which is supported on $I\times L^-(\delta,\delta^2)\times\mathbb{R}_{\eta}$, thanks to supp $\beta'\subset [-1,-\frac{1}{2}]$.
	\end{proof}
	
	\section*{E.Proof of Lemma \ref{HodgeLaplacechapter1}}
	\begin{lemma}\label{HodgeLaplace1chapter1}
		In local coordinate $Y_+$, we have
		$$P_h=-h^2\frac{\ov{g}}{\sqrt{G}}\frac{\partial}{\partial y}\big(\sqrt{G}\ov{g}^{-1}\partial_y\big)+R_h=h^2D_y^2+\mathrm{Op}_h(r)+O_{L^2\rightarrow L^2}(h).
		$$	
		Moreover, $R_h$ is a matrix-valued second order differential operator in $x$ with scalar principal symbol $r(y,x,\xi)=1-\lambda(y,x,\xi)^2$, which is self-ajoint with respect to the $(\cdot|\cdot)_{L^2(Y_+)}$.  
	\end{lemma}
	\begin{proof}
		Denote by $y=x^0$, $\partial_0=\partial_y,\partial_j=\partial_{x_j},j=1,2,\cdots d-1.$ Let $u\in \Lambda^1(Y_+)$ and $w\in \Lambda^2(Y_+)$ written in the form $$u=u_0\mathrm{d}x^0+u_j\mathrm{d}x^j,\quad w=w_{0j}\mathrm{d}x^0\wedge \mathrm{d}x^j+w_{jk}\mathrm{d}x^j\wedge \mathrm{d}x^k.$$
		We have from direct calculation that
		\begin{equation*}
		\begin{split}
		\mathrm{d}u=&(\partial_{0}u_j-\partial_ju_0)\mathrm{d}x^0\wedge \mathrm{d}x^j+\partial_ku_j\mathrm{d}x^j\wedge \mathrm{d}x^k,\\
		\mathrm{d}^*u=&-\frac{1}{\sqrt{G}}\partial_0(u_0\sqrt{G})-\frac{1}{\sqrt{G}}\partial_j(g^{jk}u_k\sqrt{G}),\\
		\mathrm{d}^*w=&\frac{1}{\sqrt{G}}\partial_k(w_{0j}g^{jk}\sqrt{G})\mathrm{d}x^0\\-&\frac{1}{\sqrt{G}}g_{jl}\left(\partial_0(w_{0k}g^{kl}\sqrt{G})+\partial_m(\sqrt{G}w_{pk}(g^{pl}g^{km}-g^{pm}g^{kl})\right)\mathrm{d}x^j
		\end{split}
		\end{equation*} 
		From direct calculation, $h^2\Delta_Hu=h^2(\mathrm{d}\mathrm{d}^*+\mathrm{d}^*\mathrm{d})u=v_0\mathrm{d}x^0+v_j\mathrm{d}x^j+R_hu$
		with
		\begin{equation*}
		\begin{split}
		v_0=&-h^2\partial_0^2u_0-h^2\frac{\partial_0(\sqrt{G})}{\sqrt{G}}\partial_0u_0,\\
		v_j=&-h^2\partial_0^2u_l-\frac{h^2}{\sqrt{G}}g_{jl}\partial_0(g^{kl}\sqrt{G})\partial_0 u_k
		\end{split}
		\end{equation*}
		and the $R_hu$ consists only the tangential derivatives $\partial_{j}$. Hence in the matrix form, 
		$$ v=L_hu:=-h^2\frac{\ov{g}}{\sqrt{G}}\frac{\partial}{\partial y}\big(\sqrt{G}\ov{g}^{-1}\frac{\partial u}{\partial y}\big).
		$$ 
		Moreover, one easily verified that $L_h^*=L_h$, thus $R_h^*=R_h$.
	\end{proof}

	\section*{F.Proof of Lemma \ref{compositionlawchapter1}}
	\begin{proof}[Proof of Lemma \ref{compositionlawchapter1}]
		For our need, it suffices to prove the last assertion. We first let $A_h=a(y,x,hD_y,hD_x)$ and $B_h=b(y,x,hD_x)$, then
		$$ A_hB_hu(y,x)=
		\frac{1}{(2\pi h)^d}\iint e^{\frac{i(x-x')\xi+i(y-y')\eta}{h}}\varphi(y',y,x,\eta,\xi)u(y',x')dy'dx'd\xi d\eta,
		$$
		where
		$$ \varphi(y',y,x,\eta,\xi)=\frac{1}{(2\pi h)^{d-1}}\iint e^{\frac{i(x-z)(\xi'-\xi)}{h}}a(y,x,\eta,\xi')b(y',z,\xi)d\xi'dz.
		$$
		Talor expansion gives
		$$\varphi(y',y,x,\eta,\xi)=\varphi(y,y,x,\eta,\xi)+(y'-y)\int_0^1\partial_{y'}\varphi(ty'+(1-t)y,y,x,\eta,\xi)dt.
		$$
		Denote by $c(y,x,\eta,\xi)=\varphi(y,y,x,\eta,\xi)$, it is obvious that $c$ is an interior symbol, since it can be viewed as a tangential symbol for fixed $\eta$, and we have
		$ (1+|\xi|)^m\lesssim (1+|\xi|+|\eta|)^m$ for all $m\in\mathbb{R}$ on the support of $c$, thanks to the support property of $a$. Now we note $C_h=c(y,x,hD_y,hD_x)$, and write
		$ A_hB_hu=C_hu+R'_hu,
		$
		where
		\begin{align*}
		R'_hu(y,x)&=\int_0^1dt\frac{1}{(2\pi h)^{d}}\iint e^{\frac{i(x-x')\xi+i(y-y')\eta}{h}}(y-y')\partial_{y'}c_t(y',y,x,\eta,\xi)u(y',x')dy'dx' d\xi d\eta\\
		&=ih\int_0^1dt\frac{1}{(2\pi h)^{d}}\iint
		e^{\frac{i(x-x')\xi+i(y-y')\eta}{h}}\partial_{\eta}\partial_{y'}c_t(y',y,x,\eta,\xi)u(y',x')dy'dx' d\xi d\eta\\
		&=:ih\int_0^1C_tu(y,x)dt,
		\end{align*}
		with
		$ c_t(y',y,x,\eta,\xi)=\varphi(ty'+(1-t)y,y,x,\eta,\xi).
		$
		Notice that
		$$ \partial_{\eta}\partial_{y'}c_t(y',y,x,\eta,\xi)=\frac{1}{(2\pi h)^{d-1}}\iint e^{\frac{i(x-z)(\xi'-\xi)}{h}}\partial_{\eta}a(y,x,\eta,\xi')(\partial_{y}b)(ty'+(1-t)y,z,\xi)d\xi'dz.
		$$
		We need to be careful here since $\partial_y b$ only exists for $y>0$ and at the point $y=0$, the right derivative $\displaystyle{(\partial_y^m)^+b(0):=\lim_{y\rightarrow 0^+}\partial^mb(y)}$ exists for any order $m$. Since we are dealing with Dirichlet boundary condition, we always apply a tangential operator $B(y,x,hD_x)$ to functions $u(y,x)$ with $u|_{y=0}=0$ in the trace sense. We could thus extend $u(y',x')$ by $u(y',x')\mathbf{1}_{y'\geq 0}$in $y'$ in the expression of the form
		$$\frac{1}{(2\pi h)^d}\iint e^{\frac{i(x-x')\xi'+i(y-y')\eta}{h}}\varphi(y',y,x,\eta,\xi)u(y',x')dy'dx'd\xi d\eta.
		$$
		Therefore, we have
		$$ \sup_{y,y'\geq 0,0<t<1,z,\xi}|\partial_{z}^{\alpha}\partial_{\xi}^{\beta}b(ty'+(1-t)y,z,\xi)|\leq C_{m,\alpha,\beta},\forall m\in\mathbb{N},\alpha,\beta\in\mathbb{N}^{d-1}.
		$$

		Now it is reduced to prove the uniform $L^2$ boundness of the operator
		$$T_hu=\int_{\mathbb{R}^d}K_h(y',x',y,x)u(y'
		,x')dy'dx',
		$$
		with kernel
		$$ K_h(y',x',y,x)=\frac{1}{(2\pi h)^d}\int_{\mathbb{R}^d}e^{\frac{i(x-x')\xi+i(y-y')\eta}{h}}H_t(y',y,x,\eta,\xi)d\eta d\xi,$$
		where
		$$ H_t(y',y,x,\eta,\xi)=\mathbf{1}_{y',y\geq 0}\frac{1}{(2\pi h)^{d-1}}\int e^{\frac{iz\zeta}{h}}a_1(y,x,\eta,\xi+\zeta)b_1(ty'+(1-t)y,x-z,\zeta)dzd\zeta.
		$$
		From Schur's test, we need to show
		\begin{gather*}
		\sup_{(y,x)\in\mathbb{R}_+^d}\int_{\mathbb{R}_+^d} |K_h(y',x',y,x)|dy'dx'\leq C_1<\infty,\\
		\sup_{(y',x')\in\mathbb{R}_+^d}\int_{\mathbb{R}_+^d} |K_h(y',x',y,x)|dydx\leq C_2<\infty,
		\end{gather*}
		with $C_1,C_2$ independent of $h$ and $t$.
		
		To this end, we   define
		$$ k_h(y,x,w,v):=\frac{1}{(2\pi)^d}\int_{\mathbb{R}^d} e^{iv\xi+iw\eta}H_t(y-hw,y,x,\eta,\xi)d\eta d\xi,
		$$
		hence,
		$$ T_hu(y,x)=\frac{1}{h^d}\int_{\mathbb{R}_+^d}k_h\big(y,x,\frac{y-y'}{h},\frac{x-x'}{h}\big)u(y',x')dy'dx'.
		$$
		Notice that $H_t(y',y,x,\eta,\xi)$ is a tangential symbol, parametrized by $(y',y,\eta)$. Moreover, it is compactly supported in $(y,x,\eta,\xi)$ variables, uniformly in the first variable $y'$. Thus, $\partial_{\eta}^m\partial_{\xi}^{\alpha}H_t(y-hw,y,x,\eta,\xi)$ has compact support in $(\eta,\xi)$ and $$|\partial_{\eta}^m\partial_{\xi}^{\alpha}H_t(y-hw,y,x,\eta,\xi)|\leq C_{m,\alpha}$$ for any $m\in\mathbb{N}$ and $\alpha\in\mathbb{N}^{d-1}$.
		Thus, doing integration by part in the expression of $k_h$, we have
		$$ \sup_{(y,x)}|k_h(y,x,w,v)|\leq C (1+|w|+|v|)^{-(d+1)}.
		$$
		Therefore, we obtain
		\begin{align*}
		\int_{\mathbb{R}_+^d}\left|K_h(y',x',y,x)\right|dy'dx'&=\frac{1}{h^d}\int_{\mathbb{R}_+^d}\big|k_h\big(
		y,x,\frac{y-y'}{h},\frac{x-x'}{h}\big)\big|dy'dx'\\
		&=\int_{\mathbb{R}^d}|k_h(y,x,w,v)|dwdv\\
		&\leq C_1,
		\end{align*}
		and
		\begin{align*} 
		\int_{\mathbb{R}_+^d}|K_h(y,x,y',x')|dydx&=\frac{1}{h^d}\int_{\mathbb{R}_+^d}\big|k_h\big(
		y,x,\frac{y-y'}{h},\frac{x-x'}{h}\big)\big|dydx\\
		&\leq
		\int_{\mathbb{R}^d}\sup_{(y,x)}|k_h(y,x,w,v)|dwdv\\
		&\leq C_2.
		\end{align*}
	\end{proof}
\subsection*{Acknowledgement}This work is supported by the European Research Council, ERC-2012-ADG, project number 320845: Semi classical Analysis and Partial Differential Equations. It is a part of Phd thesis of the author. He would like to thank his advisor, Gilles Lebeau, for his encouragement and many fruitful discussions.

\renewcommand\refname{References}
\bibliographystyle{plain}
\bibliography{propagationfinal}
\end{document}